\newtheorem{maintheorem}{Theorem}
\newtheorem{theorem}{Theorem}[section]
\newtheorem{lemma}[theorem]{Lemma}
\newtheorem{proposition}[theorem]{Proposition}
\newtheorem{corollary}[theorem]{Corollary}
\newtheorem{hypothesis}[theorem]{Hypothesis}
\theoremstyle{definition}
\newtheorem{definition}[theorem]{Definition}
\newtheorem{remark}[theorem]{Remark}
\numberwithin{equation}{theorem}
\newtheorem{example}[theorem]{Example}
 \DeclareMathOperator{\Ext}{Ext}
\DeclareMathOperator{\Hom}{Hom}
\DeclareMathOperator{\uExt}{\underline{Ext}}
\DeclareMathOperator{\uHom}{\underline{Hom}}
\DeclareMathOperator{\gr}{gr}
\DeclareMathOperator{\id}{id}
\DeclareMathOperator{\GrMod}{Gr}
\DeclareMathOperator{\Gr}{Gr}
\DeclareMathOperator{\Ch}{Ch}
\newcommand{\AsmashB}{$A\otimes^{\tau}B$ }
\DeclareMathOperator{\hdet}{hdet}
\def\bt{\begin{theorem}}
\def\et{\end{theorem}}
\def\bl{\begin{lemma}}
\def\el{\end{lemma}}
\def\br{\begin{remark}}
\def\er{\end{remark}}
\def\bc{\begin{corollary}}
\def\ec{\end{corollary}}
\begin{document}
\title{Nakayama automorphisms of twisted tensor products
%A factorization theorem for Ext-algebras of twisted tensor products and Nakayama automorphisms
}

\author{Y. Shen}
\address{Shen: Department of Mathematics, Zhejiang Sci-Tech University, Hangzhou 310018, China}
\email{yuanshen@zstu.edu.cn}

\author{G.-S. Zhou}
\address{Zhou: Ningbo Institute of Technology, Zhejiang university, Ningbo 315100, China}
\email{10906045@zju.edu.cn}

\author{D.-M. Lu}
\address{Lu: School of Mathematical Sciences, Zhejiang University, Hangzhou 310027, China}
\email{dmlu@zju.edu.cn}
\date{}

\begin{abstract}
In this paper, we study homological properties of twisted tensor products of connected graded algebras. We focus on the Ext-algebras of twisted tensor products with a certain form of twisting maps firstly. We show those Ext-algebras are also twisted tensor products, and depict the twisting maps for such Ext-algebras in-depth. With those preparations, we describe Nakayama automorphisms of twisted tensor products of noetherian Artin-Schelter regular algebras.
\end{abstract}
%\begin{abstract}
%In this paper, we study homological properties of twisted tensor product of two connected graded algebras, which contains Ore extensions and double Ore extensions. We prove a factorization theorem for the Ext-algebras of twisted tensor products. Taking advantage of the factorization theorem, we describe Nakayama automorphisms of twisted tensor products of noetherian Artin-Schelter regular algebras.
%\end{abstract}

\subjclass[2010]{16E65, 16W50, 14A22}

% 16E10 (1991-now) Homological dimension
% 16E65 (2000-now) Homological conditions on rings
%       (generalizations of regular, Gorenstein, Cohen-Macaulay rings, etc.)
% 16W50 (1991-now) Graded rings and modules
% 14A22 (1991-now) Noncommutative algebraic geometry
% 16E45  (2000-now) Differential graded algebras and applications

\keywords{Artin-Schelter regular algebras, twisted tensor products, Ext-algebras, Nakayama automorphisms}

\maketitle
%\tableofcontents

\section*{Introduction}
The appearance of twisted tensor product of two algebras provides an impactful solution to the questions about ``factorization" and  ``product"  in quantum groups and noncommutative geometry (\cite{CIMZ,CSV,VV}). Compared with the common tensor product, twisted tensor product makes up for the limitation of too much commutativity, and is a suitable choice for constructing larger classes of noncommutative algebras. Roughly speaking, twisted tensor product $A\otimes^{\tau}\!B$ of algebras $A$ and $B$ is an associative algebra, which is $A\otimes B$ as vector spaces with the multiplication defined by a twisting map $\tau:B\otimes A\to A\otimes B$. The notion also has been called $R$-smash product in several recent papers (\cite{CIMZ,SWZ,WSL}).

For our research interests, $A$ and $B$ are always finitely generated algebras. In order to obtain nice combinatorial and homological properties, it is reasonable to demand the twisting map $\tau$ being determined by the action on generating set of $B\otimes A$. This requirement can be realized by an algebra homomorphism $\sigma=(\sigma_{ij})$ from $A$ to matrix algebra $\mathbb{M}_m(A)$ and a $\sigma$-derivation $\delta=(\delta_1,\cdots,\delta_m)^T$ from $A$ to $A^{\oplus m}$, where $m$ is the number of elements in the minimal generating set of $B$. In this case, we write $\tau=(\sigma,\delta)$. Significantly, classical Ore extensions and double Ore extensions with zero tails (see \cite{ZZ1} for detail) are both special cases of such twisted tensor products.

Artin-Schelter regular (AS-regular, for short) algebras play an important role in noncommutative algebras (\cite{AS}). The classification and construction of AS-regular algebras have acquired plentiful achievements. Ore extensions and double Ore extensions are commonly used to construct AS-regular algebras. Wang and the first and third named authors lead twisted tensor products into construction of AS-regular algebras in \cite{WSL}. The result tells that the twisted tensor product of AS-regular algebras $A$ and $B$ preserves AS-regularity in case $\sigma$ is invertible (see Definition \ref{invertible}) and $B$ has a pure resolution, namely, each term of the minimal free resolution of the trivial module concentrates in one degree (\cite[Theorem 0.3]{WSL}). The typical examples of algebras with pure resolutions include ($d$-)Koszul algebras and piecewise Koszul algebras. The AS-regularity of twisted tensor products inspires us to study more homological properties of this construction.

Nakayama automorphism is one of important homological invariants for AS-regular algebras, equivalently connected graded skew Calabi-Yau algebras. From the view of skew Calabi-Yau categories (see \cite{RRZ2}), the Nakayama automorphisms correspond to Serre functors of categories. In other side, Nakayama automorphisms control Hopf actions (\cite{CWZ,LMZ}). As special cases, Nakayama automorphisms of Ore extensions and trimmed double Ore extensions of Koszul AS-regular algebras have been studied in \cite{LWW,ZVZ}, respectively. Our aim is to give a description of the Nakayama automorphism of twisted tensor product $A\otimes^{\tau}\!B$ of noetherian AS-regular algebras $A$ and $B$, where the graded twisting map $\tau=(\sigma,\delta):B\otimes A\to A\otimes B$. It is the initial purpose of this paper.

To achieve the goal, we start from the study of Ext-algebras of twisted tensor products. The Ext-algebra $E(A):=\uExt_A^*(k,k)$ is an incredibly powerful tool used in many areas of mathematics for a connected graded algebra $A$. We focus on the Yoneda product of it. Let $A$ and $B$ be two connected graded algebras with $B$ having a pure resolution, and let $A\otimes^{\tau}\!B$ be a twisted tensor product for some graded twisting map $\tau=(\sigma,0)$. We construct a minimal free resolution of trivial module of $A\otimes^{\tau}\!B$ (see Proposition \ref{minimal resolution of Ck}), by the complex obtained in \cite[Theorem 0.2]{WSL}. This free resolution helps us bridge the relationships among the Ext-algebras $E(A)$, $E(B)$ and $E(A\otimes^{\tau}B)$. As a generalization of \cite[Theorem 3.7]{SWZ}, we get a result about Ext-algebras of twisted tensor products.

\begin{maintheorem}\label{main theorem:ext algebras} Let $A$ and $B$ be two connected Ext-finite graded algebras. Let $\tau=(\sigma,0):B\otimes A\to A\otimes B$ be a graded linear map such that $A\otimes^{\tau}B$ is a twisted tensor product. Denote the Ext-algebras of $A,B$ and $A\otimes^{\tau}B$ by $E(A),E(B)$ and $E(A\otimes^{\tau}B)$, respectively. Suppose $B$ has a pure resolution, then as bigraded algebras
$$
E(A\otimes^{\tau}\!B)\cong E(B)\otimes^{\tau_E}E(A),
$$
for some bigraded linear map ${\tau_E}:E(A)\otimes E(B)\to　E(B)\otimes E(A)$.
\end{maintheorem}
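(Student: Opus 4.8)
The plan is to compute $E(C)$, where $C:=A\otimes^{\tau}\!B$, directly from the minimal free resolution supplied by Proposition~\ref{minimal resolution of Ck}, and then to recognize its Yoneda algebra as a twisted tensor product by exhibiting $E(B)$ and $E(A)$ inside it as subalgebras whose multiplication map is bijective. Let $U_\bullet\to k$ and $Q_\bullet\to k$ be the minimal graded free resolutions of the trivial modules over $A$ and $B$, with $U_i=A\otimes V_i$ and $Q_j=B\otimes W_j$; purity of $B$ means that each $W_j$ is concentrated in a single internal degree. By Proposition~\ref{minimal resolution of Ck} (building on \cite[Theorem~0.2]{WSL}), the associated twisted complex is a \emph{minimal} graded free resolution $P_\bullet\to k$ over $C$ with $P_n=C\otimes\bigl(\bigoplus_{i+j=n}W_j\otimes V_i\bigr)$. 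Applying $\uHom_C(-,k)$ and using minimality, all differentials vanish, so as bigraded vector spaces
$$
E(C)^n=\bigoplus_{i+j=n}W_j^{*}\otimes V_i^{*}=\bigoplus_{i+j=n}E(B)^j\otimes E(A)^i,
$$
and in particular the bigraded Hilbert series of $E(C)$ and of $E(B)\otimes E(A)$ coincide, all the relevant dimensions being finite by Ext-finiteness. This is the additive skeleton; what remains is purely multiplicative.

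Next I would construct the two structure homomorphisms. Since $\tau=(\sigma,0)$ is graded we have $\sigma(A_{\ge 1})\subseteq\mathbb{M}_m(A_{\ge 1})$, and a short verification shows that both $A_{\ge1}\otimes B$ and $A\otimes B_{\ge1}$ are two-sided ideals of $C$; this gives graded algebra surjections $p_A\colon C\twoheadrightarrow A$ and $p_B\colon C\twoheadrightarrow B$. Moreover $A\otimes1$ and $1\otimes B$ are graded subalgebras of $C$ (for the latter one uses the unitality axiom $\tau(b\otimes1)=1\otimes b$), each mapped identically onto its target by $p_A$, respectively $p_B$. By functoriality of the Ext-algebra (via restriction of modules), $p_A$ and $p_B$ induce bigraded algebra homomorphisms $\iota_A\colon E(A)\to E(C)$ and $\iota_B\colon E(B)\to E(C)$ that are \emph{split injective}, with retractions the maps induced by the inclusions $A\otimes1\hookrightarrow C$ and $1\otimes B\hookrightarrow C$. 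Now I invoke the standard characterization of twisted tensor products (see~\cite{CSV}): if an algebra $D$ contains subalgebras $D_1$ and $D_2$ for which the multiplication $D_1\otimes D_2\to D$ is a linear isomorphism, then $D\cong D_1\otimes^{\rho}D_2$ for a unique twisting map $\rho$. Applied with $D=E(C)$, $D_1=\iota_B(E(B))$ and $D_2=\iota_A(E(A))$, this reduces the theorem to the single assertion that
$$
\mu\colon E(B)\otimes E(A)\longrightarrow E(C),\qquad \eta\otimes\xi\longmapsto\iota_B(\eta)\,\iota_A(\xi),
$$
is bijective; the map $\tau_E$ is then the unique bigraded map sending $\xi\otimes\eta$ to $\mu^{-1}\bigl(\iota_A(\xi)\iota_B(\eta)\bigr)$, its unitality being inherited from that of $\iota_A$ and $\iota_B$. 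By the Hilbert series identity of the first paragraph it suffices to prove $\mu$ injective (equivalently, surjective).

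The heart of the matter, and the main obstacle, is this bijectivity, which I would establish by a chain-level analysis on $P_\bullet$. First, tracing $\iota_A$ and $\iota_B$ through comparison chain maps $P_\bullet\to U_\bullet|_C$ and $P_\bullet\to Q_\bullet|_C$ shows that, under the identification of the first paragraph, $\iota_A(\xi)=\xi\otimes1$ for $\xi\in E(A)^i$ (exactly, because $U_i|_C$ is generated over $C$ by the image of the $W_0\otimes V_i$ summand of $P_i$), while $\iota_B(\eta)=\eta\otimes1+(\text{terms of strictly smaller }E(B)\text{-cohomological degree})$ for $\eta\in E(B)^j$. Then, lifting these cocycles to chain endomorphisms of $P_\bullet$ and using the triangular shape of the differential of the twisted complex $P_\bullet$ with respect to the $(W,V)$-bigrading, one computes that $\mu(\eta\otimes\xi)=\eta\otimes\xi+(\text{terms of strictly smaller }E(B)\text{-cohomological degree})$; that is, $\mu$ is unitriangular for the filtration of $E(C)$ by $E(B)$-cohomological degree, hence bijective. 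The delicate point is to control these comparison and lifting maps precisely enough to see the triangular leading behaviour while keeping faithful track of the second (internal) grading, so that the isomorphism produced is genuinely bigraded. It is exactly the hypothesis that $B$ has a pure resolution that makes this manageable: it collapses the internal degrees of each $W_j$ to a single point, which is simultaneously what forces $P_\bullet$ to be minimal and what makes the triangularity argument go through.
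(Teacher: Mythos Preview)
Your proposal is correct and follows the same overall architecture as the paper: build the minimal resolution $F_\phi^\centerdot\otimes_A P^\centerdot$ of ${}_Ck$, obtain bigraded algebra embeddings $E(A)\hookrightarrow E(C)$ and $E(B)\hookrightarrow E(C)$ by functoriality of $E(-)$ applied to the surjections $\pi_A,\pi_B$ (split by $\iota_A,\iota_B$), and then invoke the factorization characterization of twisted tensor products (your \cite{CSV}; the paper's Theorem~\ref{fractorization theorem for twisted tensor product}).

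The only genuine difference is in how bijectivity of the multiplication map is verified. You hedge that $\iota_B(\eta)$ might equal $\eta\otimes 1$ only up to terms of lower $E(B)$-cohomological degree, and then argue that $\mu$ is unitriangular for the resulting filtration. This works, but the paper is sharper and more direct: in Lemma~\ref{Images of E(pi_A) and E(pi_B)} an explicit comparison map $\widetilde{\pi}_B$ (namely $\pi_B\otimes W_j$ followed by the projection $Q^\centerdot\otimes_A P^\centerdot\to Q^\centerdot\otimes_A A$) shows that $E(\pi_B)(g)=g\otimes 1$ \emph{exactly}, with no lower-order corrections. Then Lemma~\ref{The Yoneda product of E(pi_B) and E(pi_A)} computes the Yoneda product on the nose as $(g\otimes 1)(1\otimes f)=(-1)^{ij}\,g\otimes f$, by lifting $1\otimes f$ via $F_\phi^\centerdot\otimes_A\widetilde{\alpha}$ for a chosen lift $\widetilde{\alpha}:P^\centerdot\to P^\centerdot(-t)[i]$ of $f$. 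So the multiplication map $\rho$ is literally the Koszul sign-twisted identity on each bihomogeneous piece, and no filtration argument is needed. Your route would succeed, but the paper's explicit computation is shorter and yields the precise sign, which is used later in the Nakayama automorphism computations. (Minor point: with your identification $E(C)^n=\bigoplus W_j^*\otimes V_i^*$, the embedding of $E(A)$ should read $\iota_A(\xi)=1\otimes\xi$, not $\xi\otimes 1$.)
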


From the theorem above, we know the Yoneda product of Ext-algebra $E(A\otimes^{\tau}\!B)$ depends on the bigraded linear map ${\tau_E}$, which is relevant to the graded twisting map $\tau=(\sigma,0)$. So the properties of graded linear map $\sigma$ from $A$ to a matrix algebra $\mathbb{M}_m(A)$ over $A$, where $m$ is the number of elements in the minimal generating set of $B$, are determining factors for the Yoneda product of Ext-algebra $E(A\otimes^{\tau}\!B)$.

When we restrict ourselves on AS-regular algebras, there are two important homological invariants arising from $\sigma$. The first one is homological determinant $\hdet\sigma$, that is an $m\times m$ matrix over based field $k$ coming from the linear transformation of the direct sum $E(A)^{\oplus m}$ induced by $\sigma.$  It is a generalization for homological determinants of algebra automorphisms defined in \cite{JZ}.

 Besides, there is an automorphism $\det\sigma$ of $A$ such that as $(A,A)$-bimodules
$$\uExt^i_{A\otimes^{\tau}B}(A,A\otimes^{\tau}B)\cong\left\{\begin{array}{ll} {^{\det\sigma} A(l)} & \text{if }i= d,  \\0& \text{if }i\neq d, \end{array}\right.$$
for some integers $d,l$. The automorphism $\det\sigma$ is called determinant of $\sigma$. It is a generalization of determinant defined for double Ore extensions in \cite{ZZ1}. If $B$ is a polynomial algebra, $\det\sigma$ coincides with the classical determinant of matrices (see Example \ref{Example:det sigma=classical determinant of matrices}). Such two invariants of $\sigma$ make us describe the Yoneda product of Ext-algebra $E(A\otimes^{\tau}B)$ further, and provide an approach to study Nakayama automorphisms of twisted tensor products.

By using the theory of $A_\infty$-algebras, we have known an algebra is AS-regular if and only if its Ext-algebra is a Frobenius algebra (see \cite{LPWZ4}). Frobenius algebras have classical Nakayama automorphisms. By a result of Reyes, Rogalski and Zhang, Nakayama automorphisms of noetherian AS-regular algebras are dual to classical Nakayama automorphisms of their Ext-algebras (see \cite{RRZ2}). Our question turns to compute the Nakayama automorphisms of Ext-algebras.

Let  $A\otimes^\tau\!B$ be a twisted tensor product of two connected graded algebras $A, B$, where the graded twisting map $\tau=(\sigma,\delta):B\otimes A\to A\otimes B$ with nonzero $\sigma$-derivation. Note that the correlative graded linear map $\bar{\tau}=(\sigma,0):B\otimes A\to A\otimes B$ is also a graded twisting map, that is, $A\otimes^{\bar{\tau}}B$ is a twisted tensor product. The observation about the Yoneda product from the homological determinants and determinants mentioned above helps us describe the bigraded linear map $\tau_E$ for $A\otimes^{\bar{\tau}}\!B$ occurred in Theorem \ref{main theorem:ext algebras}. So we can compute the Nakayama automorphism of Ext-algebra $E(A\otimes^{\bar{\tau}}\!B)$. In particular, $A\otimes^{\bar{\tau}}\!B$ is an associated graded algebra of $A\otimes^{\tau}B$ for a canonical filtration. Following the properties of filtered algebras, we realize our ultimate goal.

\begin{maintheorem}\label{main theorem: nakayama automorphism}
Let $A$ and $B$ be both noetherian AS-regular algebras generated in degree $1$, and $Y=\{y_i\}_{i=1}^m$ be a minimal generating set of $B$. Let $\tau=(\sigma,\delta):B\otimes A\to A\otimes B$ be a graded twisting map such that associated twisted tensor product $A\otimes^{\bar{\tau}}\!B$ is noetherian. Denote the Nakayama automorphisms of $A,B$ and $A\otimes^{\tau}B$ by $\mu_A,\mu_B$ and $\mu_{A\otimes^{\tau}B}$, respectively. Suppose $B$ has a pure resolution and $\sigma$ is invertible, then Nakayama automorphism $\mu_{A\otimes^{\tau}B}$ satisfies
\begin{align*}
&{\mu_{A\otimes^{\tau}B}}_{|A}=(\det \sigma)^{-1}\circ\mu_A,\\
&{\mu_{A\otimes^{\tau}B}}
\left(
\begin{array}{c}
1\otimes^{\tau}y_1\\
1\otimes^{\tau}y_2\\
\vdots\\
1\otimes^{\tau}y_m
\end{array}
\right)
=
\hdet\sigma\cdot(A\otimes^{\tau}\,\mu_B)\left(
\begin{array}{c}
1\otimes^{\tau}y_1\\
1\otimes^{\tau}y_2\\
\vdots\\
1\otimes^{\tau}y_m
\end{array}
\right)
+
\left(
\begin{array}{c}
a_1\otimes^{\tau}1\\
a_2\otimes^{\tau}1\\
\vdots\\
a_m\otimes^{\tau}1
\end{array}
\right),
\end{align*}
where $a_1,a_2,\cdots,a_m\in A$ and $(A\otimes^{\tau}\,\mu_B)(1\otimes^{\tau} y_i)=1\otimes^{\tau}\mu_B(y_i)$ for $i=1,\cdots,m$.
\end{maintheorem}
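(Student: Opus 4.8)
The plan is to reduce the computation of $\mu_{A\otimes^{\tau}B}$ to that of $\mu_{A\otimes^{\bar\tau}B}$ via a filtration argument, and to compute the latter through its Ext-algebra using Theorem~\ref{main theorem:ext algebras}. First I would equip $A\otimes^{\tau}B$ with the canonical filtration for which $\mathrm{gr}(A\otimes^{\tau}B)\cong A\otimes^{\bar\tau}B$ (the $\sigma$-derivation $\delta$ being the ``lower order'' term): since $B$ is generated in degree $1$ one can filter by the $B$-degree, so that $\tau=(\sigma,\delta)$ degenerates to $\bar\tau=(\sigma,0)$ on the associated graded. Standard results on filtered AS-regular (equivalently connected graded skew Calabi--Yau) algebras—lifting a minimal free resolution of the trivial module from $\mathrm{gr}$ to the filtered algebra, and the compatibility of the Nakayama bimodule $\uExt^d(k,\mathrm{gr})$ with the filtration—then give that $\mu_{A\otimes^{\tau}B}$ is a filtered lift of $\mu_{A\otimes^{\bar\tau}B}$; concretely its action on the generators $a\otimes^{\tau}1$ and $1\otimes^{\tau}y_i$ agrees with that of $\mu_{A\otimes^{\bar\tau}B}$ modulo lower $B$-degree, which accounts for the correction terms $a_i\otimes^{\tau}1\in A$ (of $B$-degree $0$) appearing in the second formula. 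This step requires that $A\otimes^{\bar\tau}B$ be noetherian, which is exactly the standing hypothesis.

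Next I would compute $\mu_{A\otimes^{\bar\tau}B}$ itself. By \cite{LPWZ4} the algebra $A\otimes^{\bar\tau}B$ is AS-regular iff its Ext-algebra $E(A\otimes^{\bar\tau}B)$ is Frobenius, and by \cite{RRZ2} the Nakayama automorphism of $A\otimes^{\bar\tau}B$ is dual to the classical Nakayama automorphism of the Frobenius algebra $E(A\otimes^{\bar\tau}B)$. By Theorem~\ref{main theorem:ext algebras} we have $E(A\otimes^{\bar\tau}B)\cong E(B)\otimes^{\tau_E}E(A)$ as bigraded algebras; since $A$ and $B$ are AS-regular, $E(A)$ and $E(B)$ are finite-dimensional Frobenius, and a twisted tensor product of Frobenius algebras is Frobenius with an explicitly computable Nakayama automorphism in terms of the two factors' Nakayama automorphisms and the twisting map $\tau_E$. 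The key input here is the in-depth description of $\tau_E$ promised in the introduction: $\tau_E$ is governed by $\sigma$ through the two invariants $\hdet\sigma$ (a matrix over $k$, acting on $E(A)^{\oplus m}$) and $\det\sigma$ (an automorphism of $A$, hence dually of $E(A)$). Dualizing the resulting formula for the Nakayama automorphism of $E(B)\otimes^{\tau_E}E(A)$ back through the isomorphism $E(A\otimes^{\bar\tau}B)\cong E(B)\otimes^{\tau_E}E(A)$ and through \cite{RRZ2} should produce exactly: on the $A$-part, $\mu_A$ twisted by $(\det\sigma)^{-1}$; on the $B$-generators, $\mu_B$ twisted on the left by the scalar matrix $\hdet\sigma$.

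The main obstacle I expect is the precise bookkeeping of $\tau_E$ and its interaction with the Frobenius/Nakayama structure. One has to: (i) pin down how $\hdet\sigma$ enters the multiplication $\tau_E$ on the top-degree (socle) components of $E(A)$ and $E(B)$—this is where the ``$m\times m$ matrix over $k$'' acts on the $m$-dimensional piece $E^1(B)$ and its dual socle piece—and (ii) track how $\det\sigma$ acts as an automorphism of $E(A)$, recalling that $\det\sigma$ was \emph{defined} precisely via $\uExt^i_{A\otimes^{\tau}B}(A,A\otimes^{\tau}B)\cong{}^{\det\sigma}A(l)$, so that its appearance in $\mu_{A\otimes^{\tau}B}|_A$ is forced once one checks the compatibility of the two definitions of $\det\sigma$ (via the $(A,A)$-bimodule $\uExt^d(A,-)$ on one hand, via $\tau_E$ on the other). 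A secondary technical point is verifying that the filtered lift in the first paragraph really does preserve the splitting $A\otimes^{\tau}B = (A\otimes^{\tau}1)\oplus(\text{rest})$ closely enough that the correction lands in $A$ rather than in higher $B$-degree; this should follow from minimality of the lifted resolution together with the concentration of $\uExt^\bullet(k,-)$ in a single homological degree for AS-regular algebras.
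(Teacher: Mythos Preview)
Your proposal is correct and follows the paper's route: reduce to $\bar\tau=(\sigma,0)$ via the canonical filtration with $\gr(A\otimes^\tau B)\cong A\otimes^{\bar\tau}B$ (the paper cites \cite[Lemma~5]{V} for $\gr\,\mu_{A\otimes^\tau B}=\mu_{A\otimes^{\bar\tau}B}$), then compute $\mu_{A\otimes^{\bar\tau}B}$ by dualizing the classical Nakayama automorphism of the Frobenius Ext-algebra $E(A\otimes^{\bar\tau}B)\cong E(B)\otimes^{\tau_E}E(A)$ via \cite{RRZ2}. The only refinement worth noting is that the paper does not invoke a general Nakayama formula for twisted tensor products of Frobenius algebras; instead it computes $\mu_{E(C)}$ on the degree-one generators directly from associativity of the Frobenius pairing, which requires knowing $\tau_E$ only on the two slices $E^1(A)\otimes E^{h_B}(B)$ and $E^{h_A}(A)\otimes E^1(B)$ (Lemma~\ref{form of RE}, built on Lemmas~\ref{E^1(A) *E^hB(B)} and~\ref{E^hA(A) * E1(B)})---precisely where $\det\sigma$ and $\hdet\sigma$ enter, matching your obstacles (ii) and (i).
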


This theorem coincides with the results obtained in \cite[Theorem 2]{LWW} and \cite[Theorem 2]{ZVZ} for the case of Ore extensions and trimmed double Ore extensions of Koszul AS-regular algebras.

The paper is organized as follows. In Section \ref{Section preliminaries}, we fix notations and   recall some definitions and results, as well as the definition of homological determinant is given.  In Section \ref{Section twisted tensor products}, we review some basic results of twisted tensor products, and discuss homological properties of twisted tensor products, including a minimal free resolution of trivial module of twisted tensor product, and the determinant for a graded algebra homomorphism from a connected graded algebra to a matrix algebra over it. In Section \ref{Section ext-algebras}, we focus on the Yoneda product of Ext-algebras, and Theorem \ref{main theorem:ext algebras} is proved.  In Section \ref{Section Nakayama automorphisms}, we prove Theorem \ref{main theorem: nakayama automorphism}. In Section \ref{Section Examples}, we give some applications of Theorem \ref{main theorem: nakayama automorphism}.

Throughout the paper, $k$ is a fixed field. All vector spaces and algebras are over $k$. Unless otherwise stated, tensor product $\otimes$ means $\otimes_k$.

\section{Preliminaries}\label{Section preliminaries}

\subsection{Notations and Definitions}
In this section, we fix basic notations and recall definitions for this paper.

Let $A=\bigoplus_{i\in\mathbb{Z}}A_i$ be a graded algebra. If $\dim A_i<\infty$ for all $i$, $A$ is called \emph{locally finite}. A locally finite graded algebra $A$ is called \emph{connected}, if $A_i=0$ for all $i<0$ and $A_0=k$. In this case, we write $\varepsilon_A:A\to k$ to be the augmentation for $A$.

We denote by $\GrMod A$ the category of left graded $A$-modules with morphisms consisting of left $A$-module homomorphisms preserving degrees. We identify $\GrMod A^o$ (resp. $\GrMod A^e$) with the category of graded right $A$-modules (resp. graded $A$-bimodules), where $A^o$ is the opposite algebra of $A$ and $A^e=A\otimes A^o$ is the enveloping algebra of $A$. Unless otherwise stated, all modules are left modules in this paper.

For each integer $i$, we define \emph{$i$-th shift} functor $-(i):\GrMod A\to \GrMod A$ by sending a graded module $M$ to $M(i)$, where $M(i)$ is a graded $A$-module with new grading $M(i)_j=M_{i+j}$ for each $j\in\mathbb{Z}$, and acting on morphisms as the identity map. Let $M,N\in \Gr A$ and $i$ be an integer, write graded vector spaces
$$
\uHom_A(M,N)=\bigoplus\nolimits_{j\in \mathbb{Z}}\Hom_{\Gr A}(M,N(j))\quad\text{and}\quad  \uExt^i_A(M,N)=\bigoplus\nolimits_{j\in \mathbb{Z}}\Ext^i_{\Gr A}(M,N(j)).
$$
Note that $\uHom_A (M,N)$ and $\uExt_A^{i}(M,N)$ become graded left modules if $M$ is a graded bimodule, graded right modules if $N$ is a  graded bimodule, and  graded bimodules if $M$ and $N$ are both graded bimodules. Similar remarks apply to the tensor product $L\otimes_AM$ of a graded right $A$-module $L$  ($L\otimes_AM$ is graded by $\deg\, (l\otimes m) = \deg l +\deg m$ when $l\in L$ and $m\in M$ are homogeneous).

Let $M$ be a graded $A$-bimodule, and let $\mu,\nu$ be graded algebra automorphisms of $A$. Then $^\mu M^\nu$ denotes the twisted $A$-bimodule such that $^\mu M^\nu=M$ as abelian groups, and where $a* m* a'=\mu(a)m\nu(a')$ for all $a,a'\in A$ and $m\in M$. If $\mu=\id$ (resp. $\nu=\id$), we use $M^\nu$ (resp. $^\mu M$) for $^{\id} M^\nu$ (resp. $^\mu M^{\id}$). Note that if $A$ is connected, it follows that $A^\mu\cong A^\nu$ as graded $A$-bimodules if and only if $\mu =\nu$.

\begin{definition}
A connected graded algebra $A$ is called an \emph{Artin-Schelter regular} (\emph{AS-regular}, for short) algebra of type $(d,l)$, if the following conditions hold:
\begin{enumerate}
\item[(AS1)]\label{AS1} $A$ has finite global dimension $d$.
\item[(AS2)]\label{AS2} $\uExt_{A}^{d}(k, A)=k(l)$ and $\uExt_{A}^{i}(k, A)=0$ for $i\neq d$.
\end{enumerate}

If $A$ is an AS-regular  algebra of type $(d,l)$, then by \cite[Proposition 4.5 (b)]{YZ}, there exists a unique graded automorphism $\mu$ of $A$ such that
$$\uExt^i_{A^e}(A,A^e)\cong\left\{\begin{array}{ll} 0&i\neq d,\\A^\mu(l)&i=d,   \end{array}\right. \quad \text{as graded $A$-bimodules}.$$
In this case, the automorphism $\mu$ is called the\emph{ Nakayama automorphism of $A$}, and always denoted by $\mu_A$.
\end{definition}

We denote by $\Ch(\Gr A)$ the category of cochain complexes of graded left $A$-modules with cochain complex morphisms.  For each integer $i$, the \emph{$i$-th shift} of cochain complex is a functor $-[i]:\Ch(\Gr A)\to \Ch(\Gr A)$ sending $X^\centerdot$ to a new cochain complex $X[i]$ with terms $X[i]^j=X^{j+i}$ and differential $(-1)^id_X$, and acting on morphisms as the identity map. For any $(X^\centerdot,d_X)\in \Ch(\Gr A)$ and $(Y^\centerdot,d_Y)\in \Ch(\Gr A^o)$, $Y^\centerdot\otimes_A X^\centerdot$ is a cochain complex where $n$-th term is $\bigoplus_{i+j=n}Y^i\otimes_A X^j$ and differential $d(y\otimes_Ax)=d_Y(y)\otimes_Ax+(-1)^iy\otimes d_X(x)$ for $x\in X^j$ and $y\in Y^i$.

When dealing with bigraded cases, we follow the Koszul sign rule for the first degree (such as homological degree) throughout the paper: when interchanging bihomogeneous elements of degree $(i_1, j_1)$ and $(i_2, j_2)$, multiply by $(-1)^{i_1i_2}$, which ignores the second degree. For example, a cochain complex $X^\centerdot$ in $\Ch (\Gr A)$ can be considered as a bigraded vector spaces $\bigoplus_i X^i$ with differential $d_X$ of degree $(1,0)$. Following the Koszul sign rule, the differential of $Y^\centerdot\otimes_A X^\centerdot$ has the form of $d_Y\otimes_A\id_{X} +\id_{Y}\otimes_Ad_X$.

%The endomorphism set $\mathrm{\underline{End}}(X^\centerdot)$ is also a cochain complex in $\Ch(\Gr A)$ with $n$-th term $\prod_i\uHom_A(X^i,X^{i+n})$ and differential $d_Xf-(-1)^nfd_X$ for any $f\in \mathrm{\underline{End}}(X^\centerdot)^n$. Endowed with the obvious multiplication, $\mathrm{\underline{End}}(X^\centerdot)$ is also a differential bigraded algebra.  For any graded right $A$-module $L$, we may consider it as a cochain complex concentrated in degree $0$. Then $L\otimes X^\centerdot$ is a cochain complex with $L\otimes_A X^n$ as terms and differential $L\otimes_A d_X$.

%Another concerned functor is the $k$-dual functor.  For any vector spaces $V$, write the $k$-dual of $V$ to be $V^*=\Hom_k(V,k)$.   By abuse of notation, graded $k$-dual is denoted by $W^*=\uHom_k(W,k)$ for any graded vector space $W$.

\subsection{Ext-algebras}

Let $A$ be a connected graded algebra. There exists a graded free resolution $P^\centerdot$ of ${_Ak}$
\begin{equation*}
\xymatrix{
P^\centerdot=(\cdots\to A\otimes V_{3}\xlongrightarrow{d_P^{-3}} A\otimes V_{2}\xlongrightarrow{d_P^{-2}}A\otimes V_{1}
\xlongrightarrow{d_P^{-1}}A \to 0 \to\cdots ) \quad \xlongrightarrow{\varepsilon_A} \quad  {_Ak},
}
\end{equation*}
where $V_{i}$ is a graded vector space,
such that $\text{Im}\, d_P^{-i}\subseteq A_{\geq1}\otimes V_{i-1}$ for all $i=1,2,\cdots$. Here, we let $V_0=k$ and identify $A$ with $A\otimes k$. Such a graded free resolution is unique up to isomorphism in $\Ch(\Gr A)$, and is called the \emph{minimal free resolution} of ${_Ak}$.

Here and from now on, $(-)^*$ denotes the graded dual functor  of graded vector spaces. It is easy to see that the complex $\uHom_{A}(P^\centerdot,k)$ has zero differential. Hence the graded vector space $\uExt_A^i(k,k)=\uHom_A(A\otimes V_i, k) \cong V_i^*$ for all $i\geq 0$. We say $A$ is \emph{Ext-finite} if all $\uExt_A^i(k,k)$ are of finite dimension, or equivalently, all $V_i$  are finite dimensional, such as AS-regular algebras (\cite[Proposition 3.1]{SZ}). Also, we say $A$ has a \emph{pure resolution} if each $V_i$ concentrates in one degree. The examples of algebras with a pure resolution contain ($d$-)Koszul algebras and piecewise Koszul algebras.

%For infinite dimensional algebras, we have an important class of algebras endowed with Nakayama automorphisms, as a generalization of Calabi-Yau algebras. Here, we only concerned with the graded case. For more details, see \cite{RRZ1,RRZ2}.
%
%We do not require the finiteness of GK-dimension, which occurs in the original definition. However, noetherian algebras have finite GK-dimension. Thus, this definition is exactly the one presented in \cite{AS},  if $A$ is noetherian. It has been proved that skew CY algebras are equivalent to AS-regular algebras for connected graded case in \cite{RRZ1}.
%
%\begin{theorem}\label{skew CY=AS regular}\cite[Lemma 1.2]{RRZ1}
%Let $A$ be a connected graded algebra. Then $A$ is skew CY if and only if $A$ is AS-regular.
%\end{theorem}
%For any connected AS-regular algebra $A$, it possesses Nakayama automorphism $\mu_A$ naturally. It is also unique, since there is only trivial inner automorphisms for connected graded algebras.

%%%%%%%%%%%%%%%%%%%%%%%%%%%
The \emph{Ext-algebra} of $A$ is a bigraded vector space
$$
E(A):=\bigoplus_{(i,j)\in\mathbb{Z}^2}\uExt^i_A(k,k)_j
$$
equipped with the Yoneda product, where the second degree comes from the grading of $A$, written as subscript. The $(i,j)$-bihomogeneous piece of $E(A)$ is $E^i(A)_j=\Hom_{\Gr A}(P^{-i},k(j)).$

The paper \cite{SWZ} shows Ext-algebras for connected graded algebras give rise to a functor. We review it here for completion. Let $A,B$ be two connected graded algebras, and $f:A\to B$ be a graded algebra homomorphism. We define a bigraded algebra homomorphism $E(f):E(B)\to E(A)$ as follows.

There is a morphism of cochain complexes $\widetilde{f}:P^\centerdot\to Q^\centerdot$ in $\Ch(\Gr A)$ lifting $f$, such that $\varepsilon_B\circ f=\varepsilon_A$, where $P^\centerdot,Q^\centerdot$ are minimal free resolutions of $_Ak,{_Bk}$ respectively. Note that the graded algebra homomorphism $f$ makes all $B$-modules endow with $A$-module structures, then
$$
E(f):E(B)\cong\uHom_B(Q^{\centerdot},k)\hookrightarrow \uHom_A(Q^{\centerdot},k)\xlongrightarrow{\uHom_A(\widetilde{f},k)}\uHom_A(P^{\centerdot},k)\cong E(A).
$$
It can be checked $E(f)$ is an algebra homomorphism straightforwardly.
\begin{lemma}\label{Ext-algebra arise a functor} \cite[Theorem 2.4]{SWZ} Taking Ext-algebra
$E(-)$ is a contravariant functor from the category of connected graded algebras to the category of bigraded algebras.
%
%Consequently, if $C=A$ and $g\circ f=\id_A$ then $E(f)$ is an injection of bigraded algebras.
\end{lemma}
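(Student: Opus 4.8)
The construction of $E(f)$ given just above already exhibits it as a morphism of bigraded vector spaces: the comparison map $\widetilde f\colon P^\centerdot\to Q^\centerdot$ lives in $\Ch(\Gr A)$, so it preserves both the homological index and the internal degree, and so do the inclusion $\uHom_B(Q^\centerdot,k)\hookrightarrow\uHom_A(Q^\centerdot,k)$ and $\uHom_A(\widetilde f,k)$. Thus to prove the statement it remains to check: (i) $E(f)$ is independent of the choice of $\widetilde f$; (ii) $E(f)$ is multiplicative for the Yoneda products (so that it is a bigraded \emph{algebra} homomorphism); (iii) $E(\id_A)=\id_{E(A)}$ and $E(g\circ f)=E(f)\circ E(g)$ for composable graded homomorphisms $f\colon A\to B$, $g\colon B\to C$.

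For (i), any two $A$-linear lifts $\widetilde f,\widetilde f'\colon P^\centerdot\to Q^\centerdot$ of $f$ are chain homotopic, say $\widetilde f-\widetilde f'=d_Q\,h+h\,d_P$ for some graded $A$-linear $h$ of homological degree $-1$. Applying $\uHom_A(-,k)$ and using that $P^\centerdot$ and $Q^\centerdot$ are minimal — so that the differentials of $\uHom_A(P^\centerdot,k)$ and $\uHom_A(Q^\centerdot,k)$ both vanish — one finds $\uHom_A(\widetilde f,k)-\uHom_A(\widetilde f',k)=\uHom_A(h,k)\circ\uHom_A(d_Q,k)+\uHom_A(d_P,k)\circ\uHom_A(h,k)=0$, so $E(f)$ is well defined. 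Minimality enters only here, and this is what makes the rest of the argument purely formal.

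For (iii), taking $\widetilde{\id_A}=\id_{P^\centerdot}$ gives $E(\id_A)=\id_{E(A)}$ at once. For the composite, fix minimal free resolutions $P^\centerdot,Q^\centerdot,R^\centerdot$ of ${}_Ak,{}_Bk,{}_Ck$ and comparison maps $\widetilde f\colon P^\centerdot\to Q^\centerdot$ in $\Ch(\Gr A)$ over $f$ and $\widetilde g\colon Q^\centerdot\to R^\centerdot$ in $\Ch(\Gr B)$ over $g$. Restriction of scalars along $f$ turns $\widetilde g$ into a map in $\Ch(\Gr A)$, and then $\widetilde g\circ\widetilde f\colon P^\centerdot\to R^\centerdot$ is an $A$-linear lift of $g\circ f$ (here $R^\centerdot$ carries the $A$-structure via $g\circ f$, and we use $\varepsilon_C\circ g\circ f=\varepsilon_A$). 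By (i) we may compute $E(g\circ f)$ from this lift. Unwinding the definition, using contravariance of $\uHom_A(-,k)$, the compatibilities among the ``restrict a $k$-valued homomorphism'' inclusions $\uHom_C(R^\centerdot,k)\hookrightarrow\uHom_B(R^\centerdot,k)\hookrightarrow\uHom_A(R^\centerdot,k)$, and the fact that $\uHom_A(\widetilde g,k)$ carries $\uHom_B(R^\centerdot,k)$ into $\uHom_B(Q^\centerdot,k)$, one gets $E(g\circ f)=\uHom_A(\widetilde f,k)\circ\uHom_A(\widetilde g,k)\circ\iota=E(f)\circ E(g)$, where $\iota$ abbreviates the composite inclusion.

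For (ii) — the step requiring the most care — it is cleanest to use that the Yoneda product on $E(A)$ is the composition product under the identification $E^i(A)\cong\Hom_{D(\Gr A)}({}_Ak,{}_Ak[i])$, that restriction of scalars along $f$ is exact and hence induces a triangulated functor $D(\Gr B)\to D(\Gr A)$ sending ${}_Bk$ to ${}_Ak$, and that a functor preserves composition; a diagram chase with $\widetilde f$ then identifies the induced map $\Hom_{D(B)}({}_Bk,{}_Bk[i])\to\Hom_{D(A)}({}_Ak,{}_Ak[i])$ with $E(f)$. Alternatively one fixes a coassociative diagonal chain map computing the product on each minimal resolution and checks, again via (i), that $\widetilde f$ intertwines the two diagonals up to homotopy. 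The \textbf{main obstacle} is exactly this bookkeeping in (ii): the Yoneda product has several equivalent incarnations (splicing of extensions, composition in the derived category, a diagonal on the minimal resolution), and one must be sure that the interplay of the $A$- and $B$-module structures and the Koszul signs is tracked consistently; everything else reduces formally to uniqueness up to homotopy of comparison maps together with minimality of resolutions.
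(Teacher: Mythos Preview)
The paper does not prove this lemma: it is quoted from \cite[Theorem 2.4]{SWZ}, with only the remark preceding it that ``It can be checked $E(f)$ is an algebra homomorphism straightforwardly.'' Your argument therefore supplies what the paper omits, and the outline---well-definedness via homotopy uniqueness of lifts together with minimality, functoriality by composing lifts, multiplicativity via the derived-category description of the Yoneda product and exactness of restriction of scalars---is correct and standard.

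One small imprecision in your step (i): the differential of $\uHom_A(Q^\centerdot,k)$ need not vanish in general, since an $A$-linear map $B\otimes W_i\to k$ is not forced to kill $B_{\geq 1}\otimes W_i$ when $f$ is not surjective. What you actually need (and what holds) is that the restriction of $\uHom_A(d_Q,k)$ to the image of the inclusion $\uHom_B(Q^\centerdot,k)\hookrightarrow\uHom_A(Q^\centerdot,k)$ vanishes, and that follows from the minimality of $Q^\centerdot$ as a $B$-resolution. With that tweak the homotopy computation goes through exactly as you wrote.
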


\begin{definition}
Let $E=\bigoplus_{(i,j)\in\mathbb{Z}^2}E^i_{j}$ be a finite dimensional bigraded algebra with $E^0_0=k$. We say $E$ is a \emph{Frobenius algebra} of type  $(-d,l)$, if there is a nondegenerate associative bigraded bilinear form $\langle-,-\rangle:E\otimes E\to k$ of degree $(-d,l)$. By the nondegeneracy of the bilinear form, there exists a  bigraded automorphism $\mu$ of $E$ such that
$$
\langle a,b\rangle=(-1)^{i_1i_2}\langle \mu(b),a\rangle,
$$
for $a\in E^{i_1},b\in E^{i_2}$. It implies $E^*\cong\, ^\mu E(l)$ as bigraded $E^e$-modules. The unique automorphism $\mu$ is called the (classical) Nakayama automorphism, denoted by $\mu_E$. For more detail, see \cite{Sm}.
\end{definition}

The Ext-algebra $E(A)$ may be considered as a ``dual'' of $A$ in some sense, and recover lots of properties of $A$. We have following important results for AS-regularity and Nakayama automorphisms.
\begin{theorem}\label{the properties between ext algebras and AS regular algebras} Let $A$ be a connected graded algebra, and $E(A)$ be the Ext-algebra of $A$.
\begin{enumerate}
\item\cite[Corollary D]{LPWZ4} The algebra $A$ is AS-regular of type $(d,l)$ if and only if $E(A)$ is a Frobenius algebra of type $(-d,l)$.
\item \cite[Theorem 4.2]{RRZ2} If $A$ is noetherian AS-regular of type $(d,l)$ generated by degree $1$. Let $\mu_A$ and $\mu_E$ be Nakayama automorphisms of $A$ and $E(A)$, respectively. Then
    $$
    {\mu_E}_{|E^1(A)}=({\mu_A}_{|A_1})^*,
    $$
    where $E^1(A)$ is identified with $A_1^*$.
\end{enumerate}
\end{theorem}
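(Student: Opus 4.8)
The statement packages two known structural facts (attributed to the cited references), and I would organize a proof of part (a) around the $A_\infty$-enhancement of the Ext-algebra together with Koszul-dual reconstruction, and a proof of part (b) around the functoriality of $E(-)$ from Lemma \ref{Ext-algebra arise a functor} together with the intrinsic characterizations of the two Nakayama automorphisms. The two parts are logically sequential: part (a) produces the Frobenius structure that part (b) then compares against $\mu_A$.

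For part (a), the plan is first to record that (AS1) forces $E(A)=\bigoplus_{i=0}^{d}E^i(A)$ to be finite dimensional with $E^0(A)_0=k$, so that ``Frobenius of type $(-d,l)$'' is at least meaningful. The heart is to equip $E(A)$ with its canonical minimal $A_\infty$-structure $\{m_n\}$, transported by Kadeishvili's theorem from the endomorphism dg algebra of the minimal free resolution $P^\centerdot$, so that $A$ is reconstructed from $(E(A),\{m_n\})$ by the dual bar (Koszul-dual) construction. Under this reconstruction the module computing $\uExt_A^*(k,A)$ is identified, via graded duality, with $E(A)^*$ carrying its induced $E(A)$-action. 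I would then read off (AS2): the vanishing of $\uExt_A^i(k,A)$ for $i\neq d$ together with $\uExt_A^d(k,A)=k(l)$ says exactly that $E(A)^*$ is concentrated in top cohomological degree $d$ and internal degree $l$, and is one dimensional there, which is the same as an isomorphism $E(A)^*\cong{}^{\mu}E(A)(l)$ for a bigraded automorphism $\mu$; dualizing the resulting top pairing $E^i(A)\otimes E^{d-i}(A)\to E^d(A)\cong k$ gives a bilinear form of degree $(-d,l)$. The converse runs the same identification backwards: a Frobenius form of type $(-d,l)$ forces $\dim_k E^d(A)=1$ concentrated in degree $l$, hence the stated shape of $\uExt_A^*(k,A)$ and so (AS2), while finiteness of $E(A)$ returns (AS1).

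For part (b), I would use that $A$ is generated in degree $1$, so $V_1=A_1$ and the identification $E^1(A)=V_1^*=A_1^*$ is canonical. The automorphism $\mu_A$ is defined by the $A$-bimodule isomorphism $\uExt_{A^e}^d(A,A^e)\cong A^{\mu_A}(l)$, while $\mu_E$ is defined by the associativity rule $\langle a,b\rangle=(-1)^{i_1i_2}\langle\mu_E(b),a\rangle$ of the Frobenius form, equivalently by $E(A)^*\cong{}^{\mu_E}E(A)(l)$. The plan is to compute the functorial map $E(\mu_A)$ on the degree-$1$ piece: lifting the algebra automorphism $\mu_A$ to a chain map $\widetilde{\mu_A}:P^\centerdot\to P^\centerdot$, the induced map on $E^1(A)=\uHom_A(P^{-1},k)$ is $\uHom_A(\widetilde{\mu_A},k)$, which restricts to $(\mu_A|_{A_1})^*$ because the lift on $P^{-1}=A\otimes V_1$ is $\mu_A\otimes(\mu_A|_{A_1})$. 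It then remains to prove $\mu_E|_{E^1(A)}=E(\mu_A)|_{E^1(A)}$, which I would obtain by matching the twist $A^{\mu_A}$ in the dualizing bimodule $\uExt_{A^e}^d(A,A^e)$ with the twist $^{\mu_E}E(A)(l)$ in $E(A)^*$ from part (a), i.e.\ by showing the Frobenius form on $E(A)$ is the one induced from the AS-Gorenstein dualizing data of $A$.

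The main obstacle in part (a) is not the numerical symmetry $\dim_k E^i(A)=\dim_k E^{d-i}(A)$, which is soft, but the \emph{associativity} of the candidate form: proving that the top pairing is associative for the Yoneda product, equivalently compatible with the full $A_\infty$-structure $\{m_n\}$, is precisely what forces one to use the reconstruction rather than a bare dimension count. In part (b) the delicate step is bookkeeping: one must fix the precise duality and the internal shift $l$ so that the conclusion reads $\mu_E|_{E^1(A)}=(\mu_A|_{A_1})^*$ and not its inverse or its inverse transpose. I expect the sign $(-1)^{i_1i_2}$ in the Frobenius rule to interact with the contravariance of $E(-)$, so I would fix a single convention for the bilinear form at the outset and carry it consistently through the chain-level comparison.
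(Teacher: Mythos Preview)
The paper does not give its own proof of this theorem: it is stated as a quotation of results from the cited references \cite{LPWZ4} and \cite{RRZ2}, and no argument is supplied beyond the remark that follows it about sign conventions. There is therefore nothing in the paper to compare your proposal against.

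That said, your outline is a reasonable roadmap toward the cited results. For part (a), invoking the $A_\infty$-enhancement and the reconstruction of $A$ from $(E(A),\{m_n\})$ is exactly the mechanism of \cite{LPWZ4}, and you are right that the substantive point is associativity of the top pairing rather than the numerical symmetry. For part (b), your plan to compute $E(\mu_A)$ on $E^1(A)$ via the lift $\widetilde{\mu_A}^{-1}=\mu_A\otimes(\mu_A|_{A_1})$ on $P^{-1}$ is fine and does yield $(\mu_A|_{A_1})^*$, but the step you flag as ``matching the twist $A^{\mu_A}$ with the twist ${}^{\mu_E}E(A)(l)$'' is where all the content lies and is not yet an argument: one needs an explicit identification of the Frobenius form on $E(A)$ with a pairing coming from the dualizing complex of $A$ (in \cite{RRZ2} this is done through local duality for noetherian connected graded algebras), and it is at that stage that the noetherian hypothesis is actually used. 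Your caution about inverses versus transposes is well placed; note also the paper's remark immediately following the statement, which explains that the Koszul sign convention adopted here absorbs the extra sign that appears in \cite{RRZ2}.
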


\begin{remark}
The signs in the bigraded bilinear forms of Frobenius algebras come from the Koszul sign rule, which is different to the one defined in \cite{RRZ2}. So there is no additional sign in the formula for Nakayama automorphisms in Theorem \ref{the properties between ext algebras and AS regular algebras}(b), by a modification of signs to the proof of \cite[Theorem 4.2]{RRZ2}.
\end{remark}

\subsection{Homological determinant} Let $A$ be a connected graded algebra and $m$ be an arbitrary positive integer. Graded algebra homomorphisms from $A$ to matrix algebra $\mathbb{M}_m(A)$ play a key role in the discussion of twisted tensor products in the rest of paper. In this subsection, we introduce a definition of homological determinant for such graded algebra homomorphisms, which is a generalization of homological determinant defined in \cite{JZ,ZVZ}. Before that, we need some preparations.

\begin{definition}\label{phi-twisted module structure}
Let $M$ be a graded $A$-module and $U\cong k(l)^{\oplus m}$ be a graded vector space for some integer $l$. Let $\phi=(\phi_{ij}):A\to \mathbb{M}_m(A)$ be a graded algebra homomorphism, where each $\phi_{ij}:A\to A$ is a morphism of graded vector spaces. The \emph{$\phi$-twisted graded $A$-module} structure on  $M\otimes U $ is given by
$$
a *\left(\sum_{i=1}^m x_i\otimes e_i\right)=\sum_{j=1}^m\left(\sum_{i=1}^m \phi_{ji}(a)x_i\right)\otimes e_j,
$$
where $a\in A, x_i\in M$ and $\{e_i\}_{i=1}^m$ is the canonical coordinate basis of $U$. Such a graded $A$-module is denoted by ${^\phi(M\otimes U)}$. In particular, it is the usual twisted module structure when $m=1$.

Similarly, we can define $\phi$-twisted graded right $A$-modules.
\end{definition}

\begin{remark}
In fact, we can understand the $\phi$-twisted graded $A$-module structure by a multiplication of matrices. Note that $M\otimes U\cong M(l)^{\oplus m}$, so the $A$-action is
$$a* (x_1,x_2,\cdots,x_m)^T=\phi(a)\cdot(x_1,x_2,\cdots,x_m)^T$$
for $a\in A$ and $x_1,\cdots,x_m\in M$.
\end{remark}

\begin{definition}\label{invertible} Let $A$ be a connected graded algebra. A graded algebra homomorphism $\sigma=(\sigma_{ij}):A\to \mathbb{M}_{m}(A)$ is called \emph{invertible} if there is a graded algebra homomorphism $\varphi=(\varphi_{ij}):A\to \mathbb{M}_{m}(A)$
satisfying the following conditions:
$$
\sum_{k=1}^{m}\varphi_{jk} \circ \sigma_{ik}=
\left\{
\begin{array}{ll}
\id_A,&\text{if } \ i=j,\\
0,&\text{if }\ i\neq j,
\end{array}
\right.
\ \text{and}\ \
\sum_{k=1}^{m}\sigma_{kj}\circ \varphi_{ki}=
\left\{
\begin{array}{ll}
\id_A,&\text{if } \ i=j,\\
0,&\text{if }\ i\neq j.
\end{array}
\right.
$$
In this case, such $\varphi$ is unique, denoted by $\sigma^{-1}$.
\end{definition}

Now assume $A$ is AS-regular of type $(h,l)$. Let $\sigma:A\to \mathbb{M}_m(A)$ be a graded invertible algebra homomorphism. Write $\varphi=\sigma^{-1}$.

Choose a minimal free resolution of trivial module $_Ak$, that is, $P^\centerdot\xlongrightarrow{\varepsilon_A}{_Ak}$. Let graded vector space $U= k^{\oplus m}$. One obtains two quasi-isomorphisms
$$
\varepsilon_A^{\oplus m}:P^\centerdot \otimes U\xlongrightarrow{\varepsilon_A\otimes U} {_Ak}\otimes U\cong {_Ak}^{\oplus m},\quad\text{ and }\quad
\varepsilon_A^{\oplus m}:{^\varphi(P^\centerdot \otimes U)}\xlongrightarrow{\varepsilon_A\otimes U} {_Ak}\otimes U\cong {_Ak}^{\oplus m},
$$
where $i$-th term of ${^\varphi(P^\centerdot \otimes U)}$ is the $\varphi$-twisted graded $A$-module ${^\varphi(P^{-i} \otimes U)}$, since $^\varphi(k^{\oplus m})\cong k^{\oplus m}$ as graded $A$-modules.

It is not hard to check that each $^\varphi(P^{-i}\otimes U)$ is still a free $A$-module for $0\leq i\leq h_A$. Hence,  $P^\centerdot\otimes U$ and ${^\varphi (P^\centerdot\otimes U)}$ are both minimal free resolutions of $_Ak^{\oplus m}$, and the direct sum of $m$ copies of Ext-algebra of $A$
$$E(A)^{\oplus m}\cong \uHom_A({P^{\centerdot}\otimes U},k)\cong\uHom_A\left({^\varphi(P^\centerdot\otimes U)},k\right).$$
%$$
%P^\centerdot:\, (0\to A\otimes V_{h_A}\xlongrightarrow{d^{-h_A}} A\otimes V_{h_A-1}\xlongrightarrow{d^{-h_A+1}}\cdots \xlongrightarrow{}A\otimes V_1\xlongrightarrow{d^{-1}} A\to 0)\quad \xlongrightarrow{\varepsilon_A}\quad  {_Ak}.
%$$
% a new complex $(P^\centerdot)^{\oplus m}$ which is a direct sum of $m$ copies of $P^\centerdot$ and a complex ${^\varphi(P^\centerdot)^{\oplus m}}$ with $\varphi$-twisted $A$-module structures:
%$$
%\begin{array}{rl}
%(P^\centerdot)^{\oplus m}:&0\to P^{-h_A}\otimes U\xlongrightarrow{d^{-h_A}\otimes U} {P^{-h_A+1}\otimes U}\xlongrightarrow{d^{-h_A+1}\otimes U}\cdots  \xlongrightarrow{} {P^{-1}\otimes U}\xlongrightarrow{d^{-1}\otimes U} {A\otimes U}\to0,
%\\
%{^\varphi (P^\centerdot)^{\,\oplus m}}:&0\to {^\varphi (P^{-h_A}\otimes U)}\xlongrightarrow{d^{-h_A}\otimes U} {^\varphi(P^{-h_A+1}\otimes U)}\xlongrightarrow{d^{-h_A+1}\otimes U}\cdots  \xlongrightarrow{} {^\varphi(P^{-1}\otimes U)}\xlongrightarrow{d^{-1}\otimes U} {^\varphi(A\otimes U)}\to0,
%\end{array}
%$$
%where $U\cong k^{\oplus m}$ as graded vector spaces and write $P^0=A$.

By Comparison Theorem, there exists an isomorphism of cochain complexes $\theta:{^\varphi(P^\centerdot\otimes U)}\to P^\centerdot\otimes U$ in $\Ch(\Gr A)$ such that $\varepsilon_A^{\oplus m}\circ \theta=\varepsilon_A^{\oplus m}$. Then we have a bigraded map
$$
\uHom_A(\theta,k):E(A)^{\oplus m}\to E(A)^{\oplus m}.
$$
In particular, $E^{h}(A)$ is one dimensional because of AS-regularity. Let $\omega^*$ be a basis of $E^{h}(A)$, and write $\{\omega_i^*=(0,\cdots,\omega^*,\cdots,0)\}_{i=1}^m$ to be the canonical basis set of $(E^{h}(A))^{\oplus m}$. When restricted on $(E^{h}(A))^{\oplus m}$, $\uHom_A(\theta,k)$ provides a matrix $H\in \mathbb{M}_m(k)$ satisfying
\begin{equation*}\label{hdet coefficient}
\uHom_A(\theta,k)(\omega^*_1,\cdots,\omega^*_m)^T=H(\omega^*_1,\cdots,\omega^*_m)^T.
\end{equation*}
We say the matrix $H$ is the \emph{homological determinant} of $\sigma$, denoted by $\hdet\sigma$.

%\begin{definition}\label{hdet}
%Let $A$ be an AS-regular algebra, and let $\sigma:A\to \mathbb{M}_m(A)$ be a graded invertible  algebra homomorphism. The \emph{homological determinant} of $\sigma$, denoted by $\hdet \sigma$, is an $m\times m$ matrix
%$$
%\hdet \sigma=W,
%$$
%where $W$ is constructed in (\ref{hdet coefficient}).
%\end{definition}

\begin{remark}
If $m=1$, the definition of homological determinant here is coincident with the one given in \cite{JZ}. If $m=2$ and $A$ is Koszul algebra, it is also the same to the one in \cite{ZVZ}.
\end{remark}

Note that $\theta$ is an isomorphism of cochain complexes, we have an immediate result.
\begin{proposition}\label{hdet is invertible}
The homological determinant $\hdet \sigma$ is an invertible matrix.
\end{proposition}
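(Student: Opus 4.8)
The plan is to extract invertibility of $\hdet\sigma$ directly from the italicized fact used in its very construction: $\theta\colon {^\varphi(P^\centerdot\otimes U)}\to P^\centerdot\otimes U$ is an \emph{isomorphism} of cochain complexes in $\Ch(\Gr A)$, not merely a quasi-isomorphism. This is why the assertion is, as remarked just above, ``immediate''.

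First I would observe that, since $\theta$ is an isomorphism of complexes, it has a two-sided inverse $\theta^{-1}$ in $\Ch(\Gr A)$; applying the contravariant functor $\uHom_A(-,k)$ and using that it sends isomorphisms to isomorphisms, we get that
$$
\uHom_A(\theta,k)\colon \uHom_A(P^\centerdot\otimes U,k)\longrightarrow \uHom_A\big({^\varphi(P^\centerdot\otimes U)},k\big)
$$
is an isomorphism of bigraded vector spaces, with inverse $\uHom_A(\theta^{-1},k)$. Under the identifications coming from minimality of the two resolutions both sides are $E(A)^{\oplus m}$ (the differentials of $\uHom_A(P^\centerdot\otimes U,k)$ and $\uHom_A({^\varphi(P^\centerdot\otimes U)},k)$ vanish, so these complexes literally are their own cohomology, computing $\uExt_A^*(k^{\oplus m},k)$ in two ways), and these are precisely the identifications with respect to which the canonical basis $\{\omega_i^*\}_{i=1}^m$ and the matrix $H$ were defined.

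Next, being a morphism of cochain complexes, $\theta$ preserves homological degree, so $\uHom_A(\theta,k)$ is block-diagonal with respect to the decomposition $E(A)^{\oplus m}=\bigoplus_i E^i(A)^{\oplus m}$ into homological degrees; in particular it restricts to an isomorphism on the degree-$h$ block $(E^h(A))^{\oplus m}\to(E^h(A))^{\oplus m}$. Since $A$ is AS-regular it is Ext-finite and $E^h(A)$ is one-dimensional, so $(E^h(A))^{\oplus m}$ is an $m$-dimensional $k$-space on the basis $\{\omega_i^*\}_{i=1}^m$, and by definition the matrix of this restriction in that basis is exactly $H=\hdet\sigma$. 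An isomorphism of finite-dimensional $k$-vector spaces has invertible matrix, whence $\hdet\sigma\in\mathbb{M}_m(k)$ is invertible.

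There is essentially no obstacle here: the only points needing a line of care are that the two natural identifications of the $\uHom_A$'s of the two resolutions with $E(A)^{\oplus m}$ are indeed those used to read off $H$, and that $\uHom_A(\theta,k)$ genuinely respects the homological grading — both are immediate from the constructions above. (One could alternatively establish a multiplicativity property of $\hdet$ and conclude from $\hdet\sigma\cdot\hdet(\sigma^{-1})=\hdet(\id)=I$, but that is strictly more work and is not needed.)
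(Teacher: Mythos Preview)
Your proposal is correct and follows exactly the paper's approach: the paper simply notes that $\theta$ is an isomorphism of cochain complexes (this was stated explicitly when $\theta$ was constructed via the Comparison Theorem between minimal free resolutions), from which invertibility of $\hdet\sigma$ is immediate upon applying $\uHom_A(-,k)$ and restricting to homological degree $h$. You have merely spelled out in detail what the paper leaves implicit.
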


\section{Twisted tensor products}\label{Section twisted tensor products}
In this section, we review definitions and some basic facts mainly from \cite{CIMZ,CSV,WSL}, and discuss some homological properties of twisted tensor products. We always restrict ourselves on (bi-)graded cases.

\subsection{Basic facts}Let $A$ and $B$ be two connected graded algebras, and $\tau: B\otimes A\to A\otimes B$ be a graded linear map. Define \AsmashB equals $A\otimes B$ as graded vector spaces with the multiplication given by
$$
m_{A\otimes^\tau B}=(m_{A}\otimes m_{B})(\id_{A}\otimes \tau\otimes \id_{B}),
$$
where $m_A,m_B$ are products of $A,B$ respectively. The elements can be presented by $a\otimes^{\tau}b$ in \AsmashB for any $a\in A,b\in B$. In general, \AsmashB is not an associative graded algebra.
\begin{definition}
We say $A\otimes^{\tau}\!B$ is a \emph{twisted tensor product}, if it is associative and $1_A\otimes^{\tau}1_B$ is the unit. In this case, $\tau$ is called a \emph{graded twisting map}.
\end{definition}

We adopt the same notation for twisted tensor products to the one used in \cite{WW}, by writing twisting maps as superscripts to avoid some confusion with the common tensor products over general rings. The usual tensor product is a special kind of twisted tensor products when the graded twisting map $\tau$ is chosen to be the filliping map, and we still use the notation $\otimes$ without superscript for it. The name ``$R$-smash product" is used in \cite{CIMZ,SWZ,WSL}, which is the same to twisted tensor product, we prefer the later one here.

% since the main objects in the study are associative algebras.

To guarantee the unit condition, we need a \emph{normal} assumption on $\tau$: for all $a\in A,\; b\in B$,
$$\tau(b\otimes 1_A)=1_A\otimes b\quad  \text{and}\quad  \tau(1_B\otimes a)=a\otimes 1_B.$$   There is a necessary and sufficient condition for twisted tensor products (\cite[Theorem 2.5]{CIMZ}, \cite[Remark 2.4]{CSV}): $\tau$ is a graded twisting map if and only if $\tau$ is normal and quasitriangular, that is,
\begin{eqnarray*}
&&(\id_A\otimes\, m_B)(\tau\otimes\id_B)(\id_B\otimes \tau)=\tau\, (m_B\otimes \id_A),\\
&&(m_A\otimes \id_B)(\id_A\otimes \tau)(\tau\otimes \id_A)=\tau\,(\id_B\otimes\, m_A).
\end{eqnarray*}

In fact, there is a functor $A\otimes^{\tau}\!-: \GrMod B \to \GrMod\,(A\otimes^{\tau}\!B)$ defined as follows. For any graded module $M\in \GrMod B$,  $A \otimes^{\tau}\!M$ is the graded vector space  $A\otimes M$ equipped with the scalar multiplication given by $$(m_A\otimes \lambda_M) \circ (\id_A\otimes \tau\otimes \id_M),$$ where $m_A$ is the multiplication of $A$ and $\lambda_M$ is the scalar multiplication of $M$ as a graded $B$-module. The action of $A\otimes^{\tau}\!-$ on morphisms is the same to the map $A\otimes -$ on morphisms.

Similarly, we can define a functor $-\otimes^{\tau}\!B:\GrMod A^o\to \GrMod\,(A\otimes^{\tau}\!B)^o$.
\begin{lemma}\label{Atwtensor isomorphic to AtwtensorB tensor}
There are two pairs of natural isomorphic functors
$$
\begin{array}{llll}
A\otimes^{\tau}\!-\cong (A\otimes^{\tau}B)\!\otimes_B-:&\GrMod B&\to &\GrMod\,(A\otimes^{\tau}\!B),\\
-\otimes^{\tau}\!B\cong-\otimes_A(A\otimes^{\tau}\!B):&\GrMod A^o&\to&\GrMod\,(A\otimes^{\tau}\!B)^o.
\end{array}
$$
\end{lemma}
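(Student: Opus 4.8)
The plan is to construct explicit natural isomorphisms in each direction and check that they are compatible with the module structures; the two pairs are entirely symmetric, so it suffices to treat the left-module case $A\otimes^{\tau}\!-\cong (A\otimes^{\tau}B)\otimes_B-$ and then dualize. First I would recall that by definition $(A\otimes^{\tau}B)\otimes_B M$ is a quotient of $(A\otimes B)\otimes M$, and as a graded vector space $A\otimes^{\tau}\!M = A\otimes M$. So I would define the candidate map
$$
\Phi_M\colon (A\otimes^{\tau}B)\otimes_B M\longrightarrow A\otimes^{\tau}\!M,\qquad (a\otimes^{\tau}b)\otimes_B m\longmapsto (a\otimes^{\tau}1_B)\cdot(1_A\otimes^{\tau}b)\cdot?
$$
—more carefully, $(a\otimes^{\tau}b)\otimes_B m \mapsto a\otimes bm$, using the $B$-action on $M$, and note the normalization $\tau(1_B\otimes a)=a\otimes 1_B$ makes this well defined over $B$. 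In the other direction, I would define $\Psi_M\colon A\otimes^{\tau}\!M\to (A\otimes^{\tau}B)\otimes_B M$ by $a\otimes m\mapsto (a\otimes^{\tau}1_B)\otimes_B m$. Checking $\Phi_M\Psi_M=\id$ and $\Psi_M\Phi_M=\id$ is then a short computation using only that $1_A\otimes^{\tau}1_B$ is the unit and that $\otimes_B$ absorbs the $B$-factor.

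The substantive point is that $\Phi_M$ (equivalently $\Psi_M$) is a homomorphism of $A\otimes^{\tau}B$-modules, and this is where I would spend the most care. The left $A\otimes^{\tau}B$-module structure on $(A\otimes^{\tau}B)\otimes_B M$ is by left multiplication in the first tensor factor, while the structure on $A\otimes^{\tau}\!M$ is the one defined just before the lemma via $(m_A\otimes\lambda_M)\circ(\id_A\otimes\tau\otimes\id_M)$. Unwinding both, the identity to verify is that for $a,a'\in A$, $b,b'\in B$, $m\in M$,
$$
\bigl((a'\otimes^{\tau}b')\cdot(a\otimes^{\tau}b)\bigr)\otimes_B m \;\longmapsto\; (a'\otimes^{\tau}b')\cdot(a\otimes bm)
$$
under $\Phi_M$, where the right-hand multiplication is the $A\otimes^{\tau}\!M$-action; expanding the left side using $m_{A\otimes^{\tau}B}=(m_A\otimes m_B)(\id_A\otimes\tau\otimes\id_B)$ and then applying $\Phi_M$, one gets exactly $(m_A\otimes\lambda_M)(\id_A\otimes\tau\otimes\id_M)$ applied to $a'\otimes b'\otimes a\otimes m$, once one uses associativity of the $B$-action $\lambda_M$ relative to $m_B$ to collapse $b'(b\,m)=(b'b)m$. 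In other words the compatibility reduces to the associativity axiom already built into $A\otimes^{\tau}B$ and the module axiom for $M$. I would also note naturality: for $f\colon M\to N$ in $\GrMod B$, both functors send $f$ to (essentially) $\id_A\otimes f$, so $\Phi$ is a natural transformation by inspection, and it is a degree-preserving graded map since $\tau$ is graded and $(a\otimes^{\tau}b)\otimes_B m$ has degree $\deg a+\deg b+\deg m=\deg(a\otimes bm)$.

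The second statement, $-\otimes^{\tau}\!B\cong -\otimes_A(A\otimes^{\tau}B)$ on right modules, is proved by the mirror-image argument: for $N\in\GrMod A^o$ one sets $N\otimes_A(A\otimes^{\tau}B)\to N\otimes^{\tau}\!B$ by $n\otimes_A(a\otimes^{\tau}b)\mapsto na\otimes b$, well defined by normality $\tau(b\otimes 1_A)=1_A\otimes b$, with inverse $n\otimes b\mapsto n\otimes_A(1_A\otimes^{\tau}b)$, and the right $(A\otimes^{\tau}B)^o$-module compatibility again reduces to the quasitriangularity/associativity of $\tau$ and the $A$-module axiom for $N$. The only mild obstacle I anticipate is bookkeeping: keeping straight which of the two quasitriangular identities for $\tau$ is being invoked and making sure no spurious sign or reordering creeps in (there are no Koszul signs here since $M$, $N$ carry no homological degree). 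Once the two natural isomorphisms are written down, the lemma follows; in fact it is well known and follows from \cite[Proposition 2.1]{WSL} or the general adjunction for the functor $A\otimes^{\tau}\!-$, but the explicit formulas above are the cleanest self-contained route.
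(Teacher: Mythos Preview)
Your proposal is correct. The paper states this lemma without proof, treating it as a routine fact (and the student's closing remark that it is well known is apt). Your explicit construction of the mutually inverse maps $\Phi_M$ and $\Psi_M$, together with the verification that $A\otimes^{\tau}B$-linearity reduces to the definition $m_{A\otimes^\tau B}=(m_A\otimes m_B)(\id_A\otimes\tau\otimes\id_B)$ and the module axiom $\lambda_M\circ(m_B\otimes\id_M)=\lambda_M\circ(\id_B\otimes\lambda_M)$, is exactly the standard argument one would supply; there is nothing to compare against, and nothing to correct.
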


In order to obtain some nice combinatorial and homological properties for the twisted tensor product $A\otimes^{\tau}\!B$, we always restrict ourselves to the following situation.

Let $A=k\langle x_1,\cdots,x_n\rangle/(G_A)$ and $B=k\langle y_1,\cdots,y_m\rangle/(G_B)$ be two connected graded algebras. Since $\tau$ is just a graded linear map, we require $\tau$ being determined by its action on the generating set of $B\otimes A$ as in \cite{WSL}. That is, we ask the linear map $\tau$ has form of
$$\tau(y_j\otimes x_i)=\sum\nolimits_{t=1}^m a_{ijt}\otimes y_t+ a_{ij}\otimes 1,$$
where $a_{ijt},a_{ij}\in A$ and $i=1,\cdots,n,j=1,\cdots,m$. Following the quasitriangular condition for graded linear map $\tau$, we obtain an algebra homomorphism $\sigma=(\sigma_{ij}): A\to \mathbb{M}_m(A)$ and a $\sigma$-derivation $\delta=(\delta_i):A\to A^{\oplus m}$ such that
\begin{equation*}
\tau(y_{i}\otimes a)= \sum\nolimits_{t=1}^m \sigma_{it}(a)\otimes y_{s}+\delta_{i}(a)\otimes 1,\quad \forall a\in A,i=1,\cdots,m.
\end{equation*}
In the sequel, we write such form $\tau=(\sigma,\delta)$.

\begin{example}
For Ore extensions $A[z;\sigma,\delta]$ or double Ore extensions $A_P[y_1,y_2;\sigma,\delta]$, they are all special cases of twisted tensor products if we define $\tau=(\sigma,\delta)$ (see Example \ref{Example:Nakayama automorphisms of Ore extensions} and Example \ref{Example:Nakayama automorphisms of double Ore extensions} for detail).
\end{example}
%\begin{remark}
%The matrix algebra $\mathbb{M}_m(A)$ over $A$ is a natural graded algebra by $(\mathbb{M}_m(A))_i$ is the vector space consisting of $m\times m$ matrix taking value from $A_i$.
%\end{remark}

Moreover, if we assume the graded algebra homomorphism $\sigma:A\to \mathbb{M}_m(A)$ is invertible in the sense of Definition \ref{invertible}, there exists a graded linear  map $\tau^{-1}:A\otimes B\to B\otimes A$, which is the inverse of $\tau$, satisfying
$$
\tau^{-1}(a\otimes y_i)= \sum\nolimits_{s=1}^m y_{s}\otimes \varphi_{si}(a) +1\otimes \delta'_{i}(a),
$$
where $a\in A,\delta'_i=-\sum_{j=1}^m\delta_{j}\circ \varphi_{ji}$ and $\varphi=\sigma^{-1}$ for $  i=1,\cdots,m.$  Following the quasitriangular condition, one obtains $\tau^{-1}$ is also a graded twisting map, equivalently,  $B\otimes^{\tau^{-1}}\!\!A$ is a twisted tensor product.

\begin{lemma}\label{isomorphism of ARB and AR-1B}\cite[Proposition 2.12]{WSL}
As graded algebras, $A\otimes^{\tau}\!B\cong B\otimes^{\tau^{-1}}\!\!A$.
\end{lemma}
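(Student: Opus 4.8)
The plan is to write down the isomorphism explicitly: the twisting map itself does the job. Since the underlying graded vector space of $A\otimes^{\tau}\!B$ is $A\otimes B$ and that of $B\otimes^{\tau^{-1}}\!\!A$ is $B\otimes A$, and since $\tau^{-1}\colon A\otimes B\to B\otimes A$ is a degree-preserving linear bijection (with inverse $\tau$), I set $\Phi:=\tau^{-1}$, regarded as a map $A\otimes^{\tau}\!B\to B\otimes^{\tau^{-1}}\!\!A$. Normality gives $\tau^{-1}(1_A\otimes 1_B)=1_B\otimes 1_A$, so $\Phi$ is unital; concretely $\Phi(a\otimes^{\tau}b)=\tau^{-1}(a\otimes b)$, $\Phi(a\otimes^{\tau}1)=1\otimes^{\tau^{-1}}\!a$, $\Phi(1\otimes^{\tau}b)=b\otimes^{\tau^{-1}}\!1$. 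Everything then reduces to checking that $\Phi$ is multiplicative; granting that, $\Phi$ is the desired isomorphism of graded algebras because it is already a unital graded linear bijection.

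Next I would reduce multiplicativity to the generators. Consider $T=\{\,z\in A\otimes^{\tau}\!B:\Phi(zw)=\Phi(z)\Phi(w)\text{ for all }w\,\}$. This is a graded linear subspace, it contains $1$, and it is closed under multiplication: for $z_1,z_2\in T$ one has $\Phi(z_1z_2w)=\Phi(z_1)\Phi(z_2w)=\Phi(z_1)\Phi(z_2)\Phi(w)=\Phi(z_1z_2)\Phi(w)$, the last step using $z_1\in T$ with $w=z_2$. Hence $T$ is a unital graded subalgebra, so it suffices to show $a\otimes^{\tau}1\in T$ and $1\otimes^{\tau}b\in T$ for all $a,b$, as these generate $A\otimes^{\tau}\!B$ (indeed $(a\otimes^{\tau}1)(1\otimes^{\tau}b)=a\otimes^{\tau}b$ by the multiplication rule and normality). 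Since membership in $T$ is a linear condition in $w$, it is enough to test $w=a'\otimes^{\tau}b'$.

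For $z=a\otimes^{\tau}1$: using the multiplication rule and normality, $(a\otimes^{\tau}1)(a'\otimes^{\tau}b')=aa'\otimes^{\tau}b'$, so the left-hand side is $\tau^{-1}(aa'\otimes b')=[\tau^{-1}(m_A\otimes\id_B)](a\otimes a'\otimes b')$; unwinding the product $(1\otimes^{\tau^{-1}}\!a)\cdot\tau^{-1}(a'\otimes b')$ in $B\otimes^{\tau^{-1}}\!\!A$ via $m_{B\otimes^{\tau^{-1}}\!A}=(m_B\otimes m_A)(\id_B\otimes\tau^{-1}\otimes\id_A)$, the right-hand side becomes $[(\id_B\otimes m_A)(\tau^{-1}\otimes\id_A)(\id_A\otimes\tau^{-1})](a\otimes a'\otimes b')$, and these agree by the first quasitriangular identity for $\tau^{-1}$. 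For $z=1\otimes^{\tau}b$: here $(1\otimes^{\tau}b)(a'\otimes^{\tau}b')$ involves $\tau(b\otimes a')$, and after unwinding both sides the identity to be checked is $\tau^{-1}(\id_A\otimes m_B)(\tau\otimes\id_B)=(m_B\otimes\id_A)(\id_B\otimes\tau^{-1})$ on $B\otimes A\otimes B$; substituting the second quasitriangular identity for $\tau^{-1}$ into the left-hand side and cancelling $\tau^{-1}\circ\tau=\id_{B\otimes A}$ yields the right-hand side. This establishes $a\otimes^{\tau}1,1\otimes^{\tau}b\in T$, hence $T=A\otimes^{\tau}\!B$, so $\Phi$ is an algebra homomorphism and therefore an isomorphism.

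These computations are formal diagram chases with the two quasitriangular relations, so I expect the only real difficulty to be bookkeeping — keeping straight which quasitriangular identity (for $\tau$ or for $\tau^{-1}$) is being invoked and correctly unwinding the two different twisted multiplications on the two sides — rather than any conceptual obstacle. A convenient sanity check, essentially a shortcut, is to run the identical argument with $A$ and $B$ (and $\tau$ and $\tau^{-1}$) interchanged to see directly that $\tau\colon B\otimes^{\tau^{-1}}\!\!A\to A\otimes^{\tau}\!B$ is an algebra homomorphism; since it is the set-theoretic inverse of $\Phi$, this re-proves that $\Phi$ is an isomorphism.
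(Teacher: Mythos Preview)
Your proof is correct. The paper itself does not supply a proof of this lemma: it simply cites \cite[Proposition 2.12]{WSL} and records the statement for later use. So there is no in-paper argument to compare against.

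Your approach---taking $\Phi=\tau^{-1}$ as the candidate isomorphism and verifying multiplicativity on the generators $a\otimes^{\tau}1$ and $1\otimes^{\tau}b$ via the two quasitriangular identities for $\tau^{-1}$---is the standard one and is essentially what one would expect the cited reference to contain. The subalgebra trick with $T$ is a clean way to reduce to generators, and both of your diagram chases are accurate: the case $z=a\otimes^{\tau}1$ is literally the first quasitriangular relation for $\tau^{-1}$, and the case $z=1\otimes^{\tau}b$ follows from the second one after cancelling $\tau^{-1}\circ\tau$, exactly as you wrote. Note that you are implicitly using the fact, stated just before the lemma in the paper, that $\tau^{-1}$ is itself a graded twisting map (so that its quasitriangular identities hold); this is where the invertibility of $\sigma$ enters.
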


Note that the definition of twisted tensor products can be given for bigraded algebras similarly. We close this subsection by exhibit the factorization theorem for twisted tensor products of bigraded algebras.

\begin{theorem}\label{fractorization theorem for twisted tensor product}\cite[Theorem 2.10]{CIMZ}
Let $A,B$ and $C$ be bigraded algebras. Then the following two conditions are equivalent
\begin{enumerate}
\item As bigraded algebras, $C\cong A\otimes^{\tau}B$ for some bigraded twisting map $\tau:B\otimes  A\to A\otimes B$;
\item  There exist bigraded algebra homomorphisms $f_A:A\to C$ and $f_B:B\to C$ such that  $$m_C\circ(f_A\otimes f_B):A\otimes  B\to C$$
 is an isomorphism of bigraded vector spaces, where $m_C$ is the multiplication of $C$.
\end{enumerate}
In this case, the bigraded twisting map $\tau={\Big (}m_C\circ(f_A\otimes f_B){\Big )}^{-1}\circ m_C\circ(f_B\otimes f_A)$.
\end{theorem}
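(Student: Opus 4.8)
The plan is to prove the two implications directly, transporting algebra structures along the relevant isomorphism instead of checking the normality and quasitriangularity axioms of a twisting map by hand.

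For $(1)\Rightarrow(2)$ I would assume $C=A\otimes^{\tau}B$ for a bigraded twisting map $\tau$ and set $f_A\colon A\to C$, $a\mapsto a\otimes^{\tau}1_B$, and $f_B\colon B\to C$, $b\mapsto 1_A\otimes^{\tau}b$. These are bigraded and unital, and the normality of $\tau$ (i.e.\ $\tau(1_B\otimes a)=a\otimes 1_B$ and $\tau(b\otimes 1_A)=1_A\otimes b$) makes them multiplicative; for instance $(a\otimes^{\tau}1)(a'\otimes^{\tau}1)=(m_A\otimes m_B)(a\otimes\tau(1_B\otimes a')\otimes 1)=aa'\otimes^{\tau}1$, and symmetrically for $f_B$. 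Since $\tau(1_B\otimes 1_A)=1_A\otimes 1_B$, the composite $m_C\circ(f_A\otimes f_B)$ sends $a\otimes b$ to $(a\otimes^{\tau}1)(1\otimes^{\tau}b)=a\otimes^{\tau}b$, hence equals the identity of $A\otimes B$, in particular a bigraded isomorphism. Using $m_A(1_A\otimes-)=\id_A$ and $m_B(-\otimes 1_B)=\id_B$ one then checks $(1\otimes^{\tau}b)(a\otimes^{\tau}1)=\tau(b\otimes a)$, which is exactly the asserted formula $\tau=\big(m_C\circ(f_A\otimes f_B)\big)^{-1}\circ m_C\circ(f_B\otimes f_A)$.

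For $(2)\Rightarrow(1)$, the substantive direction, I would write $\phi:=m_C\circ(f_A\otimes f_B)\colon A\otimes B\to C$; by hypothesis $\phi$ is a bigraded vector space isomorphism, and since $f_A,f_B$ are unital one has $\phi(a\otimes 1_B)=f_A(a)$, $\phi(1_A\otimes b)=f_B(b)$ and $\phi(1_A\otimes 1_B)=1_C$. I would then pull the multiplication of $C$ back along $\phi$: the map $\widetilde m:=\phi^{-1}\circ m_C\circ(\phi\otimes\phi)$ makes $A\otimes B$ a bigraded associative algebra with unit $1_A\otimes 1_B$, isomorphic to $C$ via $\phi$. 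Now set $\tau:=\phi^{-1}\circ m_C\circ(f_B\otimes f_A)\colon B\otimes A\to A\otimes B$, and writing $\tau(b\otimes a')=\sum_i a_i'\otimes b_i'$ (a finite sum) observe that applying $\phi$ gives $f_B(b)f_A(a')=\sum_i f_A(a_i')f_B(b_i')$. Hence
\[
\phi(a\otimes b)\,\phi(a'\otimes b')=f_A(a)f_B(b)f_A(a')f_B(b')=\sum_i f_A(aa_i')f_B(b_i'b')=\phi\Big(\sum_i aa_i'\otimes b_i'b'\Big),
\]
so $\widetilde m\big((a\otimes b)\otimes(a'\otimes b')\big)=\sum_i aa_i'\otimes b_i'b'=(m_A\otimes m_B)(\id_A\otimes\tau\otimes\id_B)(a\otimes b\otimes a'\otimes b')$. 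Thus the transported product $\widetilde m$ is precisely $m_{A\otimes^{\tau}B}$; being associative with unit $1_A\otimes^{\tau}1_B$, it exhibits $\tau$ as a bigraded twisting map and $\phi\colon A\otimes^{\tau}B\to C$ as a bigraded algebra isomorphism. (Normality and quasitriangularity of $\tau$ then hold automatically by \cite[Theorem 2.5]{CIMZ}, so they need not be verified separately.)

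I do not anticipate a real obstacle; the only point requiring care is the bookkeeping in the identity $\widetilde m=m_{A\otimes^{\tau}B}$ — it is cleanest to transport the \emph{whole} multiplication of $C$ to $A\otimes B$ first and only afterwards match it term by term with the twisted tensor product formula, rather than verifying the axioms for $\tau$ head-on. The bigrading plays essentially no role, since every map involved is bigraded of degree $(0,0)$.
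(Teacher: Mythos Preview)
Your proof is correct. Note, however, that the paper does not supply its own proof of this statement: it is quoted verbatim as \cite[Theorem~2.10]{CIMZ} and used as a black box, so there is no ``paper's proof'' to compare against. Your argument---transporting the multiplication of $C$ back to $A\otimes B$ via $\phi$ and then identifying $\widetilde m$ with $m_{A\otimes^{\tau}B}$---is essentially the standard one, and the computation $\phi(a\otimes b)\phi(a'\otimes b')=\phi\big(\sum_i aa_i'\otimes b_i'b'\big)$ is exactly the right step; your remark that normality and quasitriangularity need not be checked separately (since associativity and unitality of $\widetilde m$ already make $\tau$ a twisting map by definition, with the equivalence to those axioms being \cite[Theorem~2.5]{CIMZ}) is also on point.
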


%If $\sigma$ is invertible in $R=(\sigma,\delta)$, we can define another normal linear map ${\tau^{-1}}_\sigma:A\otimes B\to B\otimes A$ satisfying
%$$
%{\tau^{-1}}_\sigma(a\otimes y_j)=\sum_{i=1}^my_i\otimes \varphi_{ij}(a)+1\otimes\delta'_j(a).
%$$
%And $B\#_{R_\sigma^{-1}}A$ is an $R_{\sigma}^{-1}$-smash product. There are obvious but important links between $A\otimes^{\tau}B$ and $B\#_{R_\sigma^{-1}}A$.
%
%\begin{lemma}\cite[Proposition 2.12]{WSL}\label{isomorphism of R smash product and invertible R smash product}
%Let $R=(\sigma,\delta):B\otimes A\to A\otimes B$ be a graded linear map such that $A\#_{R} B$ is a twisted tensor product. Suppose $\sigma$ is invertible, then
%\begin{enumerate}
%\item As graded algebras, $\zeta: A\otimes^{\tau}B\cong B\#_{{\tau^{-1}}_\sigma}A$.
%\item As graded $A$-modules and graded $B$-modules, $A\otimes^{\tau}B\cong B\#_{{\tau^{-1}}_\sigma}A$.
%\item $A\#_{R}B$ and $B\#_{{\tau^{-1}}_\sigma}A$ are free over $A$ on both sides and free over $B$ on both sides.
%\end{enumerate}
%\end{lemma}

\subsection{Minimal free resolutions}\label{Subsection Minimal free resolutiosn} The main goal of this subsection is to construct a minimal free resolution for the trivial module of a twisted tensor product of two connected graded algebras. In order to obtain some interesting homological relations between $A$, $B$ and the twisted tensor product $A\otimes^{\tau}\!B$, we always make the discussion with the following hypotheses throughout this paper, unless otherwise stated.

\begin{hypothesis}\label{hypothesis on twisted tensor products}
\begin{enumerate}
\item[]
\item Let $A$ be a connected Ext-finite graded algebra, and $\{x_1,\cdots,x_n\}$ be a minimal generating set of $A$;
\item Let $B$ be a connected Ext-finite graded algebra admitting a pure resolution, and $\{y_1,\cdots,y_m\}$ be a minimal generating set of $B$;
\item Let $\tau=(\sigma,0):B\otimes A\to A\otimes B$ be a graded twisting map, where $\sigma:A\to\mathbb{M}_m(A)$ is a graded algebra morphism.
\end{enumerate}
\end{hypothesis}

Under the hypotheses above, there are two graded algebra injections
\begin{equation*}
\begin{array}{ccclccccl}
\iota_A:&A&\to& A\otimes^{\tau}B&\qquad\qquad& \iota_B:&B&\to& A\ \otimes^\tau B\\
~&a&\mapsto& a\,\otimes^\tau1_B,&\qquad\qquad&~&b&\mapsto& 1_A\otimes^\tau b,
\end{array}
\end{equation*}
and two graded algebra surjections
\begin{equation*}
\begin{array}{ccccccccc}
\pi_A:&A\otimes^{\tau}B&\to & A&\qquad\qquad& \pi_B:&A\otimes^\tau B&\to & B\\
~&a\otimes^{\tau} b&\mapsto& a\cdot \varepsilon_B(b),&\qquad\qquad&~& a \otimes^{\tau}b&\mapsto& \varepsilon_A(a) \cdot b,
\end{array}
\end{equation*}
where $\varepsilon_A: A\to k$ and $\varepsilon_B:B\to k$ are the canonical augmentations. Clearly, $$\pi_A \circ \iota_A=\id_A \quad \text{ and } \quad \pi_B\circ \iota_B=\id_B.$$

By Lemma \ref{Ext-algebra arise a functor}, we have an immediate result.

\begin{corollary}\label{E(A) and E(B) are subalgebras}
The bigraded algebra homomorphisms $E(\pi_A):E(A)\to E(A\otimes^{\tau}\!B)$ and $E(\pi_B):E(B)\to E(A\otimes^{\tau}\!B)$ are both injective.
\end{corollary}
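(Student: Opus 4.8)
The plan is to deduce injectivity of $E(\pi_A)$ and $E(\pi_B)$ from the functoriality of $E(-)$ (Lemma \ref{Ext-algebra arise a functor}) combined with the fact that $\pi_A$ and $\pi_B$ admit sections. Recall that $\pi_A\circ\iota_A=\id_A$ and $\pi_B\circ\iota_B=\id_B$ by construction. Applying the contravariant functor $E(-)$ to the first identity gives
$$
E(\iota_A)\circ E(\pi_A)=E(\pi_A\circ\iota_A)=E(\id_A)=\id_{E(A)},
$$
as bigraded algebra homomorphisms. Hence $E(\pi_A):E(A)\to E(A\otimes^{\tau}\!B)$ has a left inverse, so it is injective. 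The same argument applied to $\pi_B\circ\iota_B=\id_B$ yields $E(\iota_B)\circ E(\pi_B)=\id_{E(B)}$, so $E(\pi_B)$ is injective as well.

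The only point that needs a word of justification is that $E(-)$ really is a functor that sends $\id$ to $\id$ and respects composition; this is exactly the content of Lemma \ref{Ext-algebra arise a functor} (\cite[Theorem 2.4]{SWZ}), so no new work is required. One should also observe that $\iota_A$, $\iota_B$, $\pi_A$, $\pi_B$ are genuine graded algebra homomorphisms between connected graded algebras — $\pi_A$ and $\pi_B$ are graded algebra surjections and $\iota_A$, $\iota_B$ are graded algebra injections, as recorded in the excerpt — so that $E(-)$ applies to each of them.

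There is essentially no obstacle here: the statement is a formal consequence of having a section, and the proof is a two-line diagram chase once functoriality is invoked. The only thing to be mildly careful about is bookkeeping of the two gradings (homological and internal), but since $E(-)$ is a functor to \emph{bigraded} algebras by Lemma \ref{Ext-algebra arise a functor}, all the identities above hold at the level of bigraded algebra homomorphisms automatically, and the conclusion that $E(\pi_A)$ and $E(\pi_B)$ are injective bigraded algebra maps follows immediately.
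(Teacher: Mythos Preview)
Your argument is correct and is exactly the one the paper intends: the corollary is stated as an immediate consequence of Lemma~\ref{Ext-algebra arise a functor} together with the identities $\pi_A\circ\iota_A=\id_A$ and $\pi_B\circ\iota_B=\id_B$ recorded just before it. No further work is required.
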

%To describe the structure of $E(C)$ precisely, we need to analyze the Yoneda products of the images of $E(A)$ and $E(B)$ in $E(C)$. We start from the construction of a minimal free resolution of $_Ck$.
%
%To consider the homological properties of twisted tensor products, especially the Ext-algebras, we construct minimal free resolutions of trivial modules of twisted tensor products.???
To show the bigraded algebra homomorphisms $E(\pi_A),E(\pi_B)$ and the Yoneda product of their images, we begin with minimal free resolutions of trivial modules. For convenience, write $C:=A\otimes^{\tau}B$.

\begin{remark}
The graded algebra homomorphisms $\pi_A$ and $\pi_B$ make all graded $A$-modules (resp. right $A$-modules) and graded $B$-modules (resp. right $B$-modules) be graded $C$-modules (resp. right $C$-modules). In most discussion of this paper, we will transfer objects and morphisms in $\Gr A$ and $\Gr B$ (resp. $\Ch(\Gr A)$ and  $\Ch(\Gr B)$) to ones in $\Gr C$ (resp. $\Ch (\Gr C)$) automatically.
\end{remark}

In the sequel, we fix a  minimal free resolution of $_Ak$ of the following form:
\begin{equation}\label{resolution of Ak}
P^\centerdot = (\cdots \xlongrightarrow{d_P^{-4}} A\otimes V_3 \xlongrightarrow{d_P^{-3}} A\otimes V_2 \xlongrightarrow{d_P^{-2}} A\otimes V_1 \xlongrightarrow{d_P^{-1}} A \to 0\to \cdots) \quad \xlongrightarrow{\varepsilon_A} \quad {_Ak} ,\tag{RA}
\end{equation}
where $V_i$ is a graded vector space for $i\geq 1$. Let $V_0= k$ and $V_i=0$ for $i<0$.

Since $B $ has a pure resolution, we may fix  a minimal free resolution of $_Bk$ of the form:
\begin{equation}\label{resolution of Bk}
Q^\centerdot = (\cdots \xlongrightarrow{d_Q^{-4}} B\otimes W_3\xlongrightarrow{d_Q^{-3}} B\otimes W_2 \xlongrightarrow{d_Q^{-2}} B\otimes W_1 \xlongrightarrow{d_Q^{-1}} B\to 0\to\cdots) \quad \xlongrightarrow{\varepsilon_B}\quad {_Bk},\tag{RB}
\end{equation}
satisfying that each $W_i= k(-l_{i})^{\oplus s_i} \text{ with } l_i, s_{i}\geq 1$ and $s_1=m$,  and $d_Q^{-1}$ assigns $1\otimes e_{j}\in B\otimes W_1$ to the coset of $y_j$ in $B$ for $j=1,\cdots,m$, where $\{e_{j}\}_{j=1}^m$ is the canonical coordinate basis of $W_1$. Write $W_0= k$ and $W_i=0$ for $i<0$.
%
%\begin{remark}
%
%\end{remark}

Since the functor $A\otimes^{\tau}\!-$ is exact and preserves projective graded modules by Lemma \ref{Atwtensor isomorphic to AtwtensorB tensor}, one obtains  a minimal free resolution $F^\centerdot=A\otimes^{\tau} Q^\centerdot$  of $A$ in $\GrMod C$ of the form:
\begin{equation}
F^\centerdot = (\cdots \xlongrightarrow{d_F^{-4}} C\otimes W_3 \xlongrightarrow{d_F^{-3}} C\otimes W_2 \xlongrightarrow{d_F^{-2}} C\otimes W_1 \xlongrightarrow{d_F^{-1}} C\to 0\to\cdots) \quad \xlongrightarrow{\pi_A}\quad {}_CA.\tag{RC1}
\end{equation}
It is worthwhile to notice that the differential $d^{-i}_F=A\otimes^{\tau}d^{-i}_Q$ for each $i\geq0$.

The next key step is to make the minimal free resolution $F^\centerdot$ of ${}_CA$ be quasi-isomorphic to ${}_CA_A$ in $\Ch(\GrMod\, (C\otimes A^o))$. To this end, we use the assumption on $B$, that it has a pure resolution, to endow each term of resolution $F^\centerdot$ with a graded $\phi$-twisted right $A$-module structure for some graded algebra homomorphism $\phi$ from $A$ to a matrix algebra over $A$.

\begin{theorem}\label{resolution of A with bimodule structure}\cite[Theorem 3.7]{WSL}
There exists a sequence of graded algebra homomorphisms $\phi_n:A\to \mathbb{M}_{s_n}(A)$ for $n\geq2$, such that $\pi_A:F^\centerdot_\phi\to A$ is a quasi-isomorphism in $\Ch(\Gr\, (C\otimes A^o))$, where
\begin{equation}\label{resolution of BAA}
F_\phi^\centerdot = \cdots \xlongrightarrow{d_F^{-4}} (C\otimes W_3)^{\phi_3} \xlongrightarrow{d_F^{-3}} (C\otimes W_2)^{\phi_2} \xlongrightarrow{d_F^{-2}} (C\otimes W_1)^\sigma \xlongrightarrow{d_F^{-1}} C\to 0\to\cdots.\tag{RC2}
\end{equation}
Moreover, if $B$ is AS-regular algebra of type $(h_B,l_B)$ and $\sigma$ is invertible, then $F^\centerdot_\phi$ terminates at $(-h_B)$-th term, and $\phi_{h_B}$ is a graded automorphism of $A$.
%In this case, $\det_B\sigma:=\phi_{h_B}$ is called the \emph{determinant of $\sigma$ with respect to $B$}.
\end{theorem}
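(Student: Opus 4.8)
The plan is to upgrade the minimal free resolution $F^\centerdot = A\otimes^\tau Q^\centerdot$ of the left $C$-module ${}_CA$ --- already available --- to a resolution of the $C$-$A$-bimodule ${}_CA_A$, by fitting each term $C\otimes W_n$ with a suitable $\phi_n$-twisted right $A$-module structure (set $\phi_1 := \sigma$ by convention). The key structural observation is that, since $\tau = (\sigma,0)$ and $\sigma$ is degree preserving, $C = A\otimes^\tau B$ is in fact \emph{bigraded}, the first degree being the total $A$-degree and the second the total $B$-degree: the $\delta$-term, which alone could lower the $B$-degree, is absent. Consequently $F^\centerdot$ is a complex of bigraded left $C$-modules with bidegree-preserving differentials ($d_F^{-n} = A\otimes^\tau d_Q^{-n}$ is given by a matrix over $\iota_B(B)$, so it preserves the $A$-degree), resolving ${}_CA = \iota_A(A)$, which sits in $B$-degree $0$. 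I also note that for any graded algebra homomorphism $\phi\colon A\to\mathbb{M}_s(A)$, the $\phi$-twisted right $A$-action on $C\otimes k(-l)^{\oplus s}$ (right multiplication by $\iota_A(A)$, twisted through the matrix $\phi$) commutes with the left $C$-action by sheer associativity in $C$, so each $(C\otimes W_n)^{\phi_n}$ is automatically a $C$-$A$-bimodule; the whole task is to choose the $\phi_n$ so that the differentials and the augmentation become bimodule maps.

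I would construct the $\phi_n$ by induction on $n$. The augmentation $\pi_A\colon C\to A$ is right $A$-linear for the (unadorned) $\phi_0 = \id$ structure because it is an algebra map, and $d_F^{-1}\colon (C\otimes W_1)^\sigma\to C$ is right $A$-linear by the one-line identity $(1_A\otimes^\tau y_j)(a\otimes^\tau 1_B) = \sum_t\sigma_{jt}(a)\otimes^\tau y_t$, which uses $\tau = (\sigma,0)$ and the explicit shape of $d_Q^{-1}$. For the inductive step, assume $\phi_1,\dots,\phi_{n-1}$ have been chosen so that $d_F^{-1},\dots,d_F^{-(n-1)}$ are bimodule maps; then $Z_{n-1} := \Ker d_F^{-(n-1)} = \Image d_F^{-n}$ is a $C$-$A$-subbimodule of $(C\otimes W_{n-1})^{\phi_{n-1}}$. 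By minimality of $F^\centerdot$ together with purity of $Q^\centerdot$, the elements $g_i := d_F^{-n}(1_C\otimes e_i^{(n)})$ minimally generate $Z_{n-1}$ as a left $C$-module and are bihomogeneous of bidegree $(0,l_n)$, while the first syzygy module of $Z_{n-1}$, namely $\Image d_F^{-(n+1)}$, is generated in $B$-degree $l_{n+1} > l_n$, hence is concentrated in $B$-degrees $> l_n$. Therefore the natural map $\iota_A(A_r)^{\oplus s_n}\to (Z_{n-1})_{(r,l_n)}$, $(\alpha_{r'})\mapsto\sum_{r'}\iota_A(\alpha_{r'})\,g_{r'}$, is surjective (since $Z_{n-1}$ is generated by the $g_{r'}$ in bidegree $(0,l_n)$ and $C_{(r,0)} = \iota_A(A_r)$) and injective (a kernel element yields an element of $\Image d_F^{-(n+1)}$ of bidegree $(r,l_n)$, which vanishes), hence an isomorphism; since $g_i\cdot a$ (for $a\in A_r$) lies in $(Z_{n-1})_{(r,l_n)}$, there are unique $\phi_n(a)_{ir'}\in A_r$ with $g_i\cdot a = \sum_{r'}\iota_A(\phi_n(a)_{ir'})\,g_{r'}$. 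This defines a graded linear map $\phi_n\colon A\to\mathbb{M}_{s_n}(A)$; uniqueness turns the identity $g_i\cdot(aa') = (g_i\cdot a)\cdot a'$ into $\phi_n(aa') = \phi_n(a)\phi_n(a')$, so $\phi_n$ is a graded algebra homomorphism, and by construction $d_F^{-n}\colon (C\otimes W_n)^{\phi_n}\to(C\otimes W_{n-1})^{\phi_{n-1}}$ is right $A$-linear. Assembling, $F^\centerdot_\phi$ is a complex of $C$-$A$-bimodules that coincides with $F^\centerdot$ as a complex of left $C$-modules, hence is exact in negative cohomological degrees, with $H^0(F^\centerdot_\phi) = \Coker d_F^{-1}\cong {}_CA_A$ via $\pi_A$; so $\pi_A\colon F^\centerdot_\phi\to A$ is a quasi-isomorphism in $\Ch(\Gr(C\otimes A^o))$.

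For the final ("moreover") assertion, if $B$ is AS-regular of type $(h_B,l_B)$ then $\gldim B = h_B$, so $Q^\centerdot$ --- hence $F^\centerdot_\phi$ --- terminates at the $(-h_B)$-th term, and AS-regularity forces the top term of the minimal free resolution of ${}_Bk$ to have rank one, i.e. $s_{h_B} = 1$, so $\phi_{h_B}\colon A\to\mathbb{M}_1(A) = A$ is a graded algebra endomorphism of $A$. To see it is an automorphism when $\sigma$ is invertible: $C$ is free as a right $B$-module via $\iota_B$, so tensor--Hom adjunction along $\iota_B\colon B\hookrightarrow C$ gives $\uExt^i_C(A,C)\cong\uExt^i_B(k,{}_BC)$; and when $\sigma$ is invertible, the isomorphism $A\otimes^\tau B\cong B\otimes^{\tau^{-1}}A$ of Lemma \ref{isomorphism of ARB and AR-1B} identifies $\iota_B(B)$ with the left tensor factor, so ${}_BC$ is free as a left $B$-module, whence $\uExt^i_B(k,{}_BC)\cong\uExt^i_B(k,B)\otimes A$ vanishes for $i\neq h_B$ and is free of rank one as a left $A$-module for $i = h_B$. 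Computing the same Ext through the bimodule resolution $F^\centerdot_\phi$ exhibits $\uExt^{h_B}_C(A,C)$ as a rank-one free left $A$-module which, via $\phi_{h_B}$, is a bimodule of the shape ${}^{\phi_{h_B}}A(*)$; a bimodule of that shape forces $\phi_{h_B}$ to be invertible, hence a graded automorphism of $A$. (One can alternatively check directly that invertibility of $\sigma$ propagates through the induction, each $\phi_n$ being invertible in the sense of Definition \ref{invertible}.)

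The step I expect to be the main obstacle is the inductive construction of $\phi_n$ --- specifically the claim that the right $A$-action on the minimal generators $g_i$ of the syzygy $Z_{n-1}$ is ``defined over $A$'', landing in $\sum_{r'}\iota_A(A)\,g_{r'}$ rather than requiring coefficients with nonzero $B$-part. This is exactly where the standing hypotheses are spent: $\delta = 0$ is what makes $C$ bigraded (without it the argument collapses), and purity of $Q^\centerdot$ keeps each $W_n$ concentrated in a single degree, so that the bidegree bookkeeping forces $g_i\cdot a$ into $C_{(r,0)}\cdot(\text{generators})$ while the strict increase $l_{n+1} > l_n$ of the successive degrees secures uniqueness. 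Granting this, multiplicativity of $\phi_n$, bimodule-linearity of the differentials, and the change-of-rings argument in the AS-regular case are all formal.
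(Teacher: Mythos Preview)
Your inductive construction of the $\phi_n$ is essentially the paper's own argument, with the bigrading on $C=A\otimes^\tau B$ (available because $\delta=0$) made explicit where the paper simply invokes ``an argument about degrees of elements in both sides''. Both proofs pivot on the same observation: purity of $Q^\centerdot$ forces the coefficients expressing $g_i\cdot a$ in terms of the $g_{r'}$ to lie in $\iota_A(A)$, and your bidegree bookkeeping and the paper's single-grading count are two phrasings of one computation. The uniqueness you extract from the vanishing of the bidegree-$(r,l_n)$ piece of $\Image d_F^{-(n+1)}$ (using $l_{n+1}>l_n$) is exactly what underlies the paper's unelaborated ``it can be checked $\phi_i$ is well-defined''.

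For the ``moreover'' clause there is a gap in your primary argument. The change-of-rings isomorphism $\uExt^i_C(A,C)\cong\uExt^i_B(k,{}_BC)$ is an isomorphism of \emph{right} $C$-modules only: the left $A$-structure on the source comes from the right $A$-action on ${}_CA_A$, and that action has no counterpart on ${}_Bk$, so it is not transported by the adjunction. Hence your computation shows $\uExt^{h_B}_C(A,C)\cong A(l_B)$ as a right $A$-module and identifies the bimodule as ${}^{\phi_{h_B}}A(l_B)$, but it does not establish that the \emph{left} $A$-module is free of rank one --- which is precisely what you would need to force $\phi_{h_B}$ to be bijective. (Knowing only that ${}^{\phi_{h_B}}A$ is free of rank one on the right says nothing about $\phi_{h_B}$.) Your parenthetical alternative, that invertibility of $\sigma$ propagates through the induction so that each $\phi_n$ is invertible in the sense of Definition~\ref{invertible}, is the correct route and is what the cited source \cite{WSL} carries out; you should make that the main argument rather than a remark.
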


\begin{proof}[Sketch of proof]
We describe the construction of complex $F_\phi^\centerdot$ briefly, which may be useful in Example \ref{Example:det sigma=classical determinant of matrices} and Example \ref{Example:twisted tensor product of special down-up algebras}, and see \cite{WSL} for the completed proof in a more general situation.

Firstly, it is obvious that the differential $d^{-1}_F$ becomes a graded right $A$-module homomorphism when the $(-1)$-th term of $F^\centerdot$ is endowed with a $\sigma$-twisted graded right $A$-module structure, because of the choice of $d^{-1}_F$. Then we obtain the sequence of graded algebra homomorphisms $\phi_n$ for $n\geq2$ inductively. Write $\phi_1=\sigma$ and $\phi_0=\id_A$.

Suppose we have got the desired graded algebra homomorphisms $\{\phi_{j}\}_{j=1}^{i-1}$ where $i\geq2$. For $\phi_{i}:A\to \mathbb{M}_{s_{i}}(A)$, we consider the following exact complex
$$
C\otimes W_{i}\xlongrightarrow{d^{-i}_F} (C\otimes W_{i-1})^{\phi_{i-1}} \xlongrightarrow{d^{-i+1}_F} (C\otimes W_{i-2})^{\phi_{i-2}}.
$$
Write the canonical coordinate basis of $W_{i}=k(-l_{i})^{\oplus s_{i}}$ to be $\{e_{j}^{i}\}_{j=1}^{s_{i}}$. For any $a\in A$ and $j=1,\cdots,s_{i}$, the element $(d^{-i}_F(1\otimes e_j^{i}))*a$ lies in the image of $d^{-i}_F$, since $d^{-i+1}_F$ is a graded right $A$-module homomorphism, say
$$
d^{-i}_F\left(\sum_{p=1}^{s_{i}}(c_{jp}\otimes e_p^{i})\right)=(d^{-i}_F(1\otimes e_j^{i}))*a,
$$
for some $c_{jp}\in C$ and $p=1,\cdots,s_{i}$. Note that $d^{-i}_F=A\otimes^{\tau} d^{-i}_Q$, and $W_{i}$ and $W_{i-1}$ are both concentrated in one degree. By an argument about degrees of elements in both sides of the equation above, we have that each $c_{jp}$ has form $a_{jp}\otimes^\tau1_B$, where $a_{jp}\in A$ for $p=1,\cdots,s_{i}.$ Define $\phi_i=((\phi_i)_{jp}):A\to \mathbb{M}_{s_i}(A)$ such that
$$
(\phi_{i})_{jp}(a)=a_{jp},
$$
for $j,p=1,\cdots,s_{i}.$ It can be checked $\phi_i$ is a well-defined graded algebra homomorphism, and make the differential
$d^{-i}_F: (C\otimes W_{i})^{\phi_i}\to(C\otimes W_{i-1})^{\phi_{i-1}}$ be a morphism in $\Gr\, (C\otimes A^o)$.
\end{proof}

\begin{proposition}\label{minimal resolution of Ck}
The morphism of cochain complexes  $\varepsilon_C:F_\phi^\centerdot\otimes_AP^\centerdot\to {_Ck}$ is a quasi-isomorphism. As a consequence, $F_\phi^\centerdot\otimes_AP^\centerdot$ is a minimal free resolution of $_Ck$.
\end{proposition}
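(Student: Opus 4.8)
The plan is to exhibit $F_\phi^\centerdot\otimes_AP^\centerdot$ as a complex of free $C$-modules, verify minimality, and then prove exactness by a spectral sequence (or iterated long exact sequence) argument built on the two quasi-isomorphisms already in hand: $\pi_A\colon F_\phi^\centerdot\to{_CA_A}$ from Theorem~\ref{resolution of A with bimodule structure} and $\varepsilon_A\colon P^\centerdot\to{_Ak}$ from \eqref{resolution of Ak}. First I would record that each term $(C\otimes W_i)^{\phi_i}\otimes_A(A\otimes V_j)$ is, as a left $C$-module, isomorphic to $C\otimes(W_i\otimes V_j)$: the right $A$-action on $(C\otimes W_i)^{\phi_i}$ is free (this is exactly the point, flagged in the text, that each $^{\phi_i}(P^{-i}\otimes U)$ is free), so tensoring over $A$ with the free left module $A\otimes V_j$ just produces another free left $C$-module of graded rank $\dim(W_i\otimes V_j)$. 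Hence $F_\phi^\centerdot\otimes_AP^\centerdot$ is a complex of graded free left $C$-modules, bounded above in homological degree by $0$.

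Next, minimality. The differential on the total complex is $d_F\otimes_A\id_P+\id_F\otimes_Ad_P$ (with the Koszul sign convention recalled in the preliminaries). Because $P^\centerdot$ is a minimal resolution we have $\Image d_P^{-j}\subseteq A_{\geq1}\otimes V_{j-1}$, and because $F^\centerdot=A\otimes^\tau Q^\centerdot$ with $Q^\centerdot$ minimal we likewise have $\Image d_F^{-i}\subseteq C_{\geq1}\otimes W_{i-1}$; both summands of the total differential therefore land in $C_{\geq1}\cdot(F_\phi^\centerdot\otimes_AP^\centerdot)$, so applying $\uHom_C(-,k)$ kills the differential and the resolution is minimal. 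This is the routine part.

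For exactness I would filter $F_\phi^\centerdot\otimes_AP^\centerdot$ by the homological degree of the $F$-factor and look at the associated spectral sequence. Fixing the $F$-term $(C\otimes W_i)^{\phi_i}=(A\otimes^\tau B)\otimes W_i$, which as a right $A$-module is free (hence flat), the row $(A\otimes^\tau B)\otimes W_i\otimes_AP^\centerdot$ computes $\operatorname{Tor}^A_\bullet$ of a flat module against $P^\centerdot\to{_Ak}$, so it is exact except in degree $0$ where it gives $(A\otimes^\tau B)\otimes W_i\otimes_Ak\cong(A\otimes^\tau B)\otimes W_i\otimes_A k$; the upshot is that the $E_1$-page collapses to the single complex $F_\phi^\centerdot\otimes_Ak$. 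Now $F_\phi^\centerdot\otimes_Ak=(C\otimes W^\centerdot)\otimes_Ak$, and since $\pi_A\colon F_\phi^\centerdot\to{_CA_A}$ is a quasi-isomorphism of complexes of right $A$-modules between complexes that are termwise free over $A$, applying $-\otimes_Ak$ gives a quasi-isomorphism $F_\phi^\centerdot\otimes_Ak\to A\otimes_Ak=k$. Chasing this back through the spectral sequence yields that $F_\phi^\centerdot\otimes_AP^\centerdot$ has homology $k$ concentrated in degree $0$, i.e. $\varepsilon_C$ is a quasi-isomorphism; minimality was already checked, so it is the minimal free resolution of ${_Ck}$. Alternatively, one can avoid spectral sequences entirely and argue by the standard "tensor product of resolutions" lemma, using that $P^\centerdot$ consists of free (hence flat) left $A$-modules and that $F_\phi^\centerdot$ is a complex of modules that are free over $A$ on the right, so $F_\phi^\centerdot\otimes_AP^\centerdot$ is quasi-isomorphic to $F_\phi^\centerdot\otimes_A{_Ak}\simeq{_CA}\otimes_A k={_Ck}$.

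The main obstacle is bookkeeping rather than a genuine difficulty: one must be careful that the right $A$-module structure used to form $F_\phi^\centerdot\otimes_AP^\centerdot$ is the twisted one $\phi_i$, check that the induced left $C$-module on each tensor term is indeed free (not merely projective) with the correct graded rank $\dim_k(W_i)\dim_k(V_j)$, and keep the Koszul signs straight so that the total differential squares to zero. Once flatness/freeness over $A$ on the right is in place, exactness is an immediate consequence of the two input quasi-isomorphisms.
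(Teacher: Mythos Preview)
Your argument is correct, but considerably heavier than what the paper does. The paper's entire proof is the two-line observation you relegate to your ``alternatively'' remark at the end: it simply factors $\varepsilon_C$ as the composite
\[
F_\phi^\centerdot\otimes_AP^\centerdot\xrightarrow{\ \pi_A\otimes_AP^\centerdot\ } {_CA_A}\otimes_AP^\centerdot\xrightarrow{\ A\otimes_A\varepsilon_A\ } {_CA_A}\otimes_A{_Ak}\cong{_Ck},
\]
and notes that each arrow is a quasi-isomorphism (the first because $\pi_A$ is one and $P^\centerdot$ is a bounded-above complex of flat left $A$-modules, the second tautologically). Minimality is declared ``clear''. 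Your spectral sequence computes the same thing by collapsing the other tensor factor first, which is fine but unnecessary; and your careful verification that each term is free over $C$ and that the total differential lands in $C_{\geq1}$, while correct, is exactly the bookkeeping the paper is suppressing under ``clear''. What your longer write-up buys is an explicit check of freeness and minimality that a cautious reader might want spelled out; what the paper's version buys is brevity, since both quasi-isomorphisms are already on the table from \eqref{resolution of Ak} and Theorem~\ref{resolution of A with bimodule structure}.
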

\begin{proof}
The augmentation $\varepsilon_C$ of $C$ equals the composition of complexes morphisms
$$
F^\centerdot_\phi\otimes_AP^\centerdot\xlongrightarrow[\simeq]{\pi_A\otimes_AP^\centerdot}(_CA_A)\otimes_AP^\centerdot
\xlongrightarrow[\simeq]{A\otimes_A\varepsilon_A}(_CA_A)\otimes_A(_Ak)\cong {_Ck},
$$
and it is a quasi-isomorphism. The minimality is clear.
\end{proof}

The complex $F^\centerdot_\phi$ also helps us acquire the following result about AS-regularity of twisted tensor products.
\begin{theorem}\label{AS-regularity of twisted tensor products}\cite[Theorem 0.3]{WSL}
Let $A$ be AS-regular of type $(h_A,l_A)$ and $B$ be AS-regular of type $(h_B,l_B)$. Let $\tau=(\sigma,\delta):B\otimes A\to A\otimes B$ be a graded twisting map. If $B$ has a pure resolution and $\sigma$ is invertible, then $A\otimes^{\tau}\!B$ is AS-regular of type $(h_A+h_B,l_A+l_B)$.
\end{theorem}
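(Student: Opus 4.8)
The statement to prove is Theorem~\ref{AS-regularity of twisted tensor products}: if $A$ is AS-regular of type $(h_A,l_A)$, $B$ is AS-regular of type $(h_B,l_B)$, $\tau=(\sigma,\delta)$ is a graded twisting map with $B$ having a pure resolution and $\sigma$ invertible, then $A\otimes^\tau B$ is AS-regular of type $(h_A+h_B,l_A+l_B)$.

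\textbf{Plan of proof.} The plan is to compute $\uExt^\centerdot_C(k,k)$ and $\uExt^\centerdot_C(k,C)$ directly from an explicit minimal free resolution of ${}_Ck$, where $C:=A\otimes^\tau B$. First I would reduce to the case $\delta=0$: since the relevant homological data depend only on the leading term, and more precisely since the canonical filtration on $C=A\otimes^\tau B$ has associated graded algebra $A\otimes^{\bar\tau}B$ with $\bar\tau=(\sigma,0)$ — and a connected graded algebra is AS-regular whenever its associated graded algebra (for a suitable filtration) is, by a standard filtered-to-graded argument on global dimension and on $\uExt^\centerdot(k,-)$ — it suffices to treat $\tau=(\sigma,0)$, which is exactly the setting of Hypothesis~\ref{hypothesis on twisted tensor products}. (Here $\sigma$ invertible guarantees $A\otimes^{\bar\tau}B$ is still a twisted tensor product and that the filtration behaves well.) So assume $\tau=(\sigma,0)$.

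Next I would invoke Theorem~\ref{resolution of A with bimodule structure}: since $B$ is AS-regular of type $(h_B,l_B)$ and $\sigma$ is invertible, the complex $F^\centerdot_\phi$ of \eqref{resolution of BAA} is a finite minimal free resolution of ${}_CA_A$ terminating in homological degree $-h_B$, with top term $(C\otimes W_{h_B})^{\phi_{h_B}}$ where $W_{h_B}=k(-l_B)$ (one-dimensional, as $B$ is AS-regular) and $\phi_{h_B}$ a graded automorphism of $A$. Combining with the fixed minimal free resolution $P^\centerdot$ of ${}_Ak$ (which has length $h_A$ since $A$ is AS-regular), Proposition~\ref{minimal resolution of Ck} gives that $F^\centerdot_\phi\otimes_A P^\centerdot$ is a minimal free resolution of ${}_Ck$ of length $h_A+h_B$. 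This immediately yields $\gldim C = h_A+h_B$, establishing (AS1). For (AS2), I would apply $\uHom_C(-,C)$ to $F^\centerdot_\phi\otimes_AP^\centerdot$. Using the standard adjunction and the fact that $F^\centerdot_\phi=(A\otimes^\tau Q^\centerdot)$ with the twisted right $A$-structure, one identifies $\uHom_C(F^\centerdot_\phi\otimes_AP^\centerdot, C)$ with a complex built from $\uHom_A(P^\centerdot, A)$ (which computes $\uExt^\centerdot_A(k,A)=k(l_A)$ concentrated in degree $h_A$, by AS-regularity of $A$) tensored with the dual of $F^\centerdot_\phi$ over $A$; the latter, because $F^\centerdot_\phi$ resolves ${}_CA_A$ and the top term is $(C\otimes k(-l_B))^{\phi_{h_B}}$, contributes $k(l_B)$ concentrated in degree $h_B$ (the automorphism $\phi_{h_B}$ only twists the bimodule structure, which is invisible to $\uExt_C(k,C)$ as a graded vector space). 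Assembling, $\uExt^i_C(k,C)$ vanishes for $i\neq h_A+h_B$ and equals $k(l_A+l_B)$ for $i=h_A+h_B$, which is (AS2) of type $(h_A+h_B,l_A+l_B)$.

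\textbf{Main obstacle.} The delicate point is the computation of $\uHom_C(F^\centerdot_\phi\otimes_AP^\centerdot,\,C)$ and showing its cohomology is concentrated in top degree with the correct shift. One must carefully track two things simultaneously: the twisted right $A$-module structures $\phi_n$ on the terms of $F^\centerdot_\phi$ (so that $-\otimes_AP^\centerdot$ and the subsequent $\uHom_C(-,C)$ are taken over the correct actions), and the Koszul signs in the total complex of the tensor product of two resolutions. The key structural input that makes it work is that $F^\centerdot_\phi$ is a \emph{finite} resolution (using $\sigma$ invertible, via Theorem~\ref{resolution of A with bimodule structure}) with a one-dimensional top term, so that dualizing behaves like dualizing an AS-regular-type resolution; without finiteness the $\uHom$ and the tensor product would not commute with cohomology. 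Verifying that the surviving cohomology class sits in the single bidegree $(h_A+h_B,\,-(l_A+l_B))$ — equivalently, that no lower-degree contributions survive — amounts to the exactness statements already packaged in Proposition~\ref{minimal resolution of Ck} and Theorem~\ref{resolution of A with bimodule structure}, applied after $\uHom_C(-,C)$; the bookkeeping is routine once those are in hand, so I would present it compactly rather than expanding every term.
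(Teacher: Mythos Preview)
The paper does not give its own proof of this theorem; it simply cites \cite[Theorem~0.3]{WSL}. So there is nothing in the paper to compare against directly. That said, your plan is a correct reconstruction of the argument using precisely the machinery the present paper assembles (Theorem~\ref{resolution of A with bimodule structure}, Proposition~\ref{minimal resolution of Ck}, and the computation behind Lemma~\ref{det sigma--AS-reg}).

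Two small points worth tightening. First, your reduction to $\delta=0$ via the filtration is the same filtration the paper introduces just before Theorem~\ref{Nakayama automorphism of twisted tensor products with nonzero delta}; you should state explicitly that AS-regularity of a connected filtered algebra follows from that of its associated graded (Rees-ring or spectral-sequence argument), and that the type $(d,l)$ is preserved. Second, your description of the (AS2) step is slightly off: the identification you want is the adjunction
\[
\uHom_C\bigl(F^\centerdot_\phi\otimes_A P^\centerdot,\,C\bigr)\;\cong\;\uHom_A\bigl(P^\centerdot,\,\uHom_C(F^\centerdot_\phi,C)\bigr),
\]
not a tensor product. The inner complex $\uHom_C(F^\centerdot_\phi,C)$ is exactly what the proof of Lemma~\ref{det sigma--AS-reg} computes (via Lemma~\ref{isomorphism for homomorphism with twsited structures} and the isomorphism $A\otimes^\tau B\cong B\otimes^{\tau^{-1}}\!A$): it has cohomology ${}^{\det\sigma}A(l_B)$ concentrated in degree $h_B$, which as a left $A$-module is free of rank one. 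Then either a collapsing spectral sequence or a direct substitution yields $\uExt^n_C(k,C)\cong \uExt^{\,n-h_B}_A(k,A)(l_B)$, and AS-regularity of $A$ finishes the computation. With these clarifications your argument goes through.
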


\subsection{Determinant}For a graded algebra homomorphism $\sigma=(\sigma_{ij}):A\to \mathbb{M}_m(A)$, $\sigma$ is an $m\times m$ matrix consisting of graded linear maps $\{\sigma_{ij}:A\to A\}_{i,j=1}^m$. Classical determinant is an important invariant for a matrix, and we will give a generalized definition of determinant for such graded invertible algebra homomorphisms in the sense of Definition \ref{invertible}.

\begin{definition}\label{determinant--definition}
Let $A,B$ be two connected graded algebras, and $\sigma:A\to \mathbb{M}_m(A)$ be a graded invertible algebra homomorphism where $m$ is the number of elements in the minimal generating set of $B$. If graded linear map $\tau=(\sigma,0):B\otimes A\to A\otimes B$ is a graded twisting map and there exists a graded algebra automorphism $\nu$ of $A$ satisfying
$$
\uExt_{A\otimes^{\tau}B}^i(A,A\otimes^{\tau}\!B)\cong
\left\{
\begin{array}{ll}
0       &      i\neq d,  \\
{^{\nu}A}(l) &      i=d,
\end{array}
\right.
$$
as graded $A^e$-modules for some integers $d$ and $l$, we say $\nu$ is the \emph{determinant} of $\sigma$, denoted by $\det\sigma$.
\end{definition}

Since $A$ is connected, the determinant $\det \sigma$ is unique whenever it exists. The definition of determinant above is a generalization of the one given for the case of double Ore extensions by \cite[Theorem 0.4]{ZZ1}. However, the determinant does not exist in general. In the following, we focus on the interested case in which the determinant always exists.

We assume connected graded algebras $A,B$ and the graded linear map $\tau=(\sigma,0):B\otimes A\to A\otimes B$ satisfy Hypothesis \ref{hypothesis on twisted tensor products}. So $\sigma$ is a graded algebra homomorphism from $A$ to $\mathbb{M}_m(A)$. In addition, assume $B$ is AS-regular of type $(h_B,l_B)$ and $\sigma$ is invertible. Now the complex (\ref{resolution of BAA}) in Theorem \ref{resolution of A with bimodule structure} becomes
\begin{equation}\label{resolution of CAA--ASreg case}
F_\phi^\centerdot = \cdots\to 0 \to(C\otimes W_{h_B})^{\phi_{h_B}} \xlongrightarrow{d_F^{-{h_B}}} (C\otimes W_{h_B-1})^{\phi_{h_B-1}}\xlongrightarrow{d_F^{-h_{B}+1}}\cdots \xlongrightarrow{d_F^{-2}} (C\otimes W_1)^\sigma \xlongrightarrow{d_F^{-1}} C\to 0\to\cdots,\tag{RC3}
\end{equation}
where $W_{h_B}= k(-l_B)$ and $\phi_{h_B}$ is a graded automorphism of $A$.

The determinant of $\sigma$ is related to the cohomology of cochain complex $\uHom_C(F^\centerdot_\phi,C)$. Before computing the cohomologies, we transfer the Hom-set to the symmetrical version $B\otimes^{\tau^{-1}}\!\!A$.

Let $\psi:A\to\mathbb{M}_p(A)$ be a graded algebra homomorphism and $W= k(t)^{\oplus p}$ be a graded vector space for some integers $t$ and $p$. It is obvious,
$\uHom_{A\otimes^{\tau}B}\left((A\otimes^{\tau}B)\otimes W,A\otimes^{\tau}B\right)\cong W^*\otimes \left(A\otimes^\tau\! B\right)$ as graded $A^e$-modules. With additional $\psi$-twisted graded $A$-module structures, we have following isomorphisms in $\Gr A^e$,
\begin{eqnarray*}
\uHom_{A\otimes^{\tau}B}\left(\left((A\otimes^{\tau}B)\otimes W\right)^\psi,A\otimes^{\tau}B\right)
&\cong& {^\psi\left(W^*\otimes \left(A\otimes^\tau\! B\right)\right)}\\
&\cong& {^\psi\left(W^*\otimes\left(B\otimes^{\tau^{-1}}\!\!A\right)\right)}\\
&\cong& {^\psi\left(\left(W^*\otimes B\right)\otimes^{\tau^{-1}}\!\!A\right)}\\
&\cong& {^\psi\left(\uHom_{B}\left(B\otimes W,B\right)\otimes^{\tau^{-1}}\!\!A\right)}
\end{eqnarray*}
%\begin{eqnarray*}
%\uHom_{A\otimes^{\tau}B}\left(\left((A\otimes^{\tau}B)\otimes W\right)^\psi,A\otimes^{\tau}B\right)
%&\cong& {^\psi\left(\left(A\#_{R}B\right)\otimes W^*\right)}\\
%&\cong& {^\psi\left(\left(B\otimes^{\tau^{-1}}A\right)\otimes W^*\right)}\\
%&\cong& {^\psi\left(\left(B\otimes W^*\right)\otimes^{\tau^{-1}}A\right)}\\
%&\cong& {^\psi\left(\uHom_{B}\left(B\otimes W,B\right)\otimes^{\tau^{-1}}A\right)}
%\end{eqnarray*}
where the second isomorphism comes from Lemma \ref{isomorphism of ARB and AR-1B}, the third isomorphism follows from Lemma \ref{Atwtensor isomorphic to AtwtensorB tensor}, and the $\psi$-twisted graded $A$-module structure of ${^\psi\left(\uHom_{B}\left(B\otimes W,B\right)\otimes^{\tau^{-1}}\!\!A\right)}$ is induced by the one of $^\psi\left(\left(B\otimes^{\tau^{-1}}\!\!A\right)\otimes W^*\right)$. Now we have an isomorphism as below.
\begin{lemma}\label{isomorphism for homomorphism with twsited structures}
As graded $A^e$-modules,  $\uHom_{A\otimes^{\tau}B}\left(\left((A\otimes^{\tau}B)\otimes W\right)^\psi,A\otimes^{\tau}B\right)\cong {^\psi\left(\uHom_{B}\left(B\otimes W,B\right)\otimes^{\tau^{-1}}\!\!A\right)}$.
\end{lemma}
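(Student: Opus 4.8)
The plan is to prove the lemma by justifying, one step at a time, the chain of four isomorphisms displayed immediately above the statement, each of which is first an isomorphism of graded vector spaces and then promoted to an isomorphism in $\Gr A^e$ by tracking the two $A$-actions carefully. Note that $W=k(t)^{\oplus p}$ and all the algebras in play carry only the internal grading here, so the Koszul sign rule is vacuous and the whole argument is pure bookkeeping of where the left and right $A$-actions go. Throughout I work under the standing assumption of this subsection that $\sigma$ is invertible, which is exactly what guarantees that $\tau^{-1}$, and hence $B\otimes^{\tau^{-1}}\!A$, exists.

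First I would establish the base identification $\uHom_{A\otimes^{\tau}B}\big((A\otimes^{\tau}B)\otimes W,\,A\otimes^{\tau}B\big)\cong W^*\otimes(A\otimes^{\tau}B)$: since $(A\otimes^{\tau}B)\otimes W$ is finitely generated free, a homomorphism is determined by the images of the basis $\{1\otimes e_j\}$ of $W$, which produces the graded dual $W^*$ tensored with the target, and the right $A$-action that $\uHom$ inherits from the bimodule $A\otimes^{\tau}B$ (via $\iota_A$) in its second slot passes to the obvious right $A$-action on the $A\otimes^{\tau}B$-factor. To put the $\psi$-twist in, one checks that under this identification the $\psi$-twisted right $A$-module structure on the source $\big((A\otimes^{\tau}B)\otimes W\big)^\psi$ is turned, by contravariance of $\uHom$, into a left $A$-action on $W^*\otimes(A\otimes^{\tau}B)$ which is precisely the $\psi$-twist; this gives the first term ${}^\psi\big(W^*\otimes(A\otimes^{\tau}B)\big)$ in $\Gr A^e$. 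The second isomorphism then invokes Lemma \ref{isomorphism of ARB and AR-1B}, which provides a graded algebra isomorphism $A\otimes^{\tau}B\cong B\otimes^{\tau^{-1}}\!A$; the point to verify is that this isomorphism is compatible with the $A$-bimodule structures, i.e.\ that it intertwines $\iota_A:A\to A\otimes^{\tau}B$ with the canonical copy $a\mapsto 1\otimes a$ of $A$ inside $B\otimes^{\tau^{-1}}\!A$, which is built into the construction in \cite{WSL}. Tensoring with $W^*$ and retaining the $\psi$-twist yields ${}^\psi\big(W^*\otimes(B\otimes^{\tau^{-1}}\!A)\big)$.

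For the third isomorphism I would apply the analogue of Lemma \ref{Atwtensor isomorphic to AtwtensorB tensor} for $B\otimes^{\tau^{-1}}\!A$: writing $W^*\otimes B$ as a graded $B$-bimodule with $W^*$ acting trivially, one has $(W^*\otimes B)\otimes^{\tau^{-1}}\!A\cong(W^*\otimes B)\otimes_B(B\otimes^{\tau^{-1}}\!A)\cong W^*\otimes(B\otimes^{\tau^{-1}}\!A)$ as right $(B\otimes^{\tau^{-1}}\!A)$-modules, hence as right $A$-modules, and one checks that the left $A$-action coming from $\psi$ is carried through unchanged, producing ${}^\psi\big((W^*\otimes B)\otimes^{\tau^{-1}}\!A\big)$. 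Finally, the fourth isomorphism re-expresses $W^*\otimes B$ as $\uHom_B(B\otimes W,B)$ — once more the termwise computation of $\uHom$ out of a finitely generated free $B$-module, now as graded $B$-bimodules — which delivers ${}^\psi\big(\uHom_B(B\otimes W,B)\otimes^{\tau^{-1}}\!A\big)$ and completes the proof.

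The main obstacle is not the existence of the underlying vector-space isomorphisms but the verification that each one is $A$-bilinear for the intended structures: in particular that the single $\psi$-twist introduced in the first step survives all three subsequent rearrangements and lands exactly on the $A$-factor of $B\otimes^{\tau^{-1}}\!A$ as asserted, and that the right $A$-action inherited from the target $A\otimes^{\tau}B$ of $\uHom$ agrees with the right $A$-action coming from the algebra $B\otimes^{\tau^{-1}}\!A$. I would handle this by fixing the basis $\{e_j\}$ of $W$ (equivalently the dual basis of $W^*$) once and for all, writing a general element as $\sum_j e_j^*\otimes c_j$ with $c_j\in A\otimes^{\tau}B$, transporting it explicitly through each map, and comparing the two $A$-actions coefficientwise; the normality of $\tau$ and of $\tau^{-1}$ is what makes the copies of $A$ sit compatibly inside both twisted tensor products.
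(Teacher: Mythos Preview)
Your proposal is correct and follows exactly the paper's own argument: the paper proves the lemma by the very chain of four isomorphisms you describe (displayed immediately before the statement), citing Lemma~\ref{isomorphism of ARB and AR-1B} for the second step and Lemma~\ref{Atwtensor isomorphic to AtwtensorB tensor} for the third, with the $\psi$-twist carried along throughout. If anything, you are more careful than the paper in spelling out the bookkeeping of the two $A$-actions.
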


%Before computing the cohomology, it is necessary to explain the graded $\psi$-twisted $A$-module structure where $\psi:A\to\mathbb{M}_p(A)$ for some integer $p$ again.
%
%Let $W= k(-t)^p$ be a graded vector space with canonical basis $\{e_i\}_{i=1}^p$. Write $g_i:B\otimes W\to B(-t)$ to be a graded $B$-module homomorphism such that $g_i(1\otimes e_j)=\delta_{ij}\cdot1$ where $\delta_{ij}$ is the Kronecker notation and $i,j=1,\cdots,p$. Then each element of graded $B$-module  $\uHom_B(B\otimes W,B)$ can be represented by $\sum_{i=1}^p b_i\cdot g_i$.  Note that by Lemma \ref{Atwtensor isomorphic to AtwtensorB tensor}, as graded $A^e$-modules, we have a natural isomorphism
%$$
%\uHom_{A\otimes^{\tau}B}{\Big(}\big{(}(A\otimes^{\tau}B)\otimes W\big{)}^\psi, A\otimes^{\tau}B\Big{)}\cong {^\psi\Big{(}A\otimes^{\tau}\uHom_B(B\otimes W, B)\Big{)}}
%$$
%where the graded $\psi$-twisted $A$-module structure of ${^\psi\Big{(}A\otimes^{\tau}\uHom_B(B\otimes W, B)\Big{)}}$ is given by
%$$
%a'\cdot (a\otimes^{\tau} g_i)=\sum_{j=1}^p\psi_{ji}(a')a\,\otimes^{\tau}\, g_j,
%$$
%for any $a',a\in A$ and $i=1,\cdots,p$. By means of this isomorphism of graded $A^e$-modules, we have the following result.
Then we can prove the main result of this subsection.
\begin{lemma}\label{det sigma--AS-reg}
The determinant $\det\sigma=\phi_{h_B}$.
\end{lemma}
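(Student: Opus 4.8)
The plan is to compute the graded $A^e$-modules $\uExt^i_{A\otimes^\tau B}(A,A\otimes^\tau B)$ directly from the resolution $F^\centerdot_\phi$ of Theorem~\ref{resolution of A with bimodule structure} and to read off that the automorphism $\nu$ of Definition~\ref{determinant--definition} is $\phi_{h_B}$. Write $C=A\otimes^\tau B$. Since $B$ is AS-regular of type $(h_B,l_B)$ and $\sigma$ is invertible, Theorem~\ref{resolution of A with bimodule structure} furnishes the finite resolution (\ref{resolution of CAA--ASreg case}): $F^\centerdot_\phi\to {}_CA_A$, whose $(-i)$-th term $(C\otimes W_i)^{\phi_i}$ is free as a left $C$-module, with top term $(C\otimes W_{h_B})^{\phi_{h_B}}$ where $W_{h_B}=k(-l_B)$ and $\phi_{h_B}$ is a graded automorphism of $A$. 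As $F^\centerdot_\phi$ is a resolution of $A$ by objects free over $C$ carrying compatible right $A$-module structures, $\uExt^i_C(A,C)=H^i\big(\uHom_C(F^\centerdot_\phi,C)\big)$ as graded $A^e$-modules, the left $A$-action being dual to the (twisted) right $A$-action on $F^\centerdot_\phi$ and the right $A$-action the restriction along $\iota_A$ of the right $C$-action on the target $C$.

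I would then apply Lemma~\ref{isomorphism for homomorphism with twsited structures} in each cohomological degree, obtaining graded $A^e$-module isomorphisms $\uHom_C\big((C\otimes W_i)^{\phi_i},C\big)\cong {}^{\phi_i}\big(\uHom_B(B\otimes W_i,B)\otimes^{\tau^{-1}}\!A\big)$. Because $d^{-i}_F=A\otimes^\tau d^{-i}_Q$ and the isomorphisms of Lemma~\ref{isomorphism for homomorphism with twsited structures} are assembled from functorial constructions, these identifications are compatible with the differentials; hence $\uHom_C(F^\centerdot_\phi,C)$ is isomorphic, as a complex of graded $A^e$-modules, to the complex $\uHom_B(Q^\centerdot,B)\otimes^{\tau^{-1}}\!A$ in which the $i$-th term has its left $A$-action additionally twisted by $\phi_i$. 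Forgetting these twists (they change neither the underlying complex nor the right $A$-action) and using $\uHom_B(Q^\centerdot,B)\otimes^{\tau^{-1}}\!A\cong\uHom_B(Q^\centerdot,B)\otimes_B(B\otimes^{\tau^{-1}}\!A)$ (Lemma~\ref{Atwtensor isomorphic to AtwtensorB tensor}) together with the freeness — hence flatness — of $B\otimes^{\tau^{-1}}\!A$ as a left $B$-module (normality of $\tau^{-1}$), the functor $-\otimes^{\tau^{-1}}\!A$ is exact, so the cohomology of this complex is $\uExt^\centerdot_B(k,B)\otimes^{\tau^{-1}}\!A$. Since $B$ is AS-regular of type $(h_B,l_B)$, condition (AS2) gives $\uExt^i_B(k,B)=0$ for $i\neq h_B$ and $\uExt^{h_B}_B(k,B)=k(l_B)$; therefore $\uExt^i_C(A,C)=0$ for $i\neq h_B$.

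It remains to identify the graded $A^e$-module $\uExt^{h_B}_C(A,C)$. Since $\uHom_C(F^\centerdot_\phi,C)$ is concentrated in cohomological degrees $0,\dots,h_B$, its degree-$h_B$ cohomology is the cokernel of the final differential, hence a quotient of $\uHom_C\big((C\otimes W_{h_B})^{\phi_{h_B}},C\big)\cong {}^{\phi_{h_B}}\big(\uHom_B(B\otimes W_{h_B},B)\otimes^{\tau^{-1}}\!A\big)$; as the incoming differential is $A$-linear for the $\phi_{h_B}$-twisted structure, the twist descends to the cokernel, giving $\uExt^{h_B}_C(A,C)\cong{}^{\phi_{h_B}}\big(k(l_B)\otimes^{\tau^{-1}}\!A\big)$ with the natural $A^e$-structure inherited from Lemma~\ref{isomorphism for homomorphism with twsited structures}. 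Finally I would check that $k(l_B)\otimes^{\tau^{-1}}\!A\cong A(l)$ as a graded $A^e$-module for a suitable integer $l$: under the identifications $\uHom_B(B\otimes W_{h_B},B)\cong B(l_B)$ and $B(l_B)\otimes^{\tau^{-1}}\!A\cong C(l_B)$, the left and the right $A$-actions on this top term are left, resp. right, multiplication by $\iota_A(A)$ in $C$, and passing to the cokernel corresponds to applying $\pi_A$, under which both become ordinary left and right multiplication on $A$. Consequently $\uExt^i_C(A,C)$ vanishes for $i\neq h_B$ and equals ${}^{\phi_{h_B}}A(l)$ for $i=h_B$, so by Definition~\ref{determinant--definition} (and uniqueness of the determinant for connected $A$), $\det\sigma=\phi_{h_B}$. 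The point requiring real care is the bookkeeping of the graded $A^e$-structures through this chain of identifications — in particular, verifying that the termwise isomorphisms of Lemma~\ref{isomorphism for homomorphism with twsited structures} commute with the differentials and, above all, that the degree-$h_B$ cohomology inherits precisely the \emph{left} twist by $\phi_{h_B}$ rather than by $\phi_{h_B}^{-1}$ or some composite; the value of the shift $l$ is only routine degree counting and is irrelevant to the statement.
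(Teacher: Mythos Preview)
Your proof is correct and follows essentially the same route as the paper: apply $\uHom_C(-,C)$ to $F^\centerdot_\phi$, use Lemma~\ref{isomorphism for homomorphism with twsited structures} termwise to identify the resulting complex (forgetting left $A$-structure) with $\uHom_B(Q^\centerdot,B)\otimes^{\tau^{-1}}\!A$, and then invoke exactness of $-\otimes^{\tau^{-1}}\!A$ together with AS-regularity of $B$. The paper's proof is terser and omits the careful tracking of the $A^e$-structure on the top cohomology that you spell out; your additional bookkeeping is exactly what a reader would have to supply.
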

\begin{proof}
Applying the functor $\uHom_C(-,C)$ to the complex $F_\phi^\centerdot$ (\ref{resolution of CAA--ASreg case}), we have a commutative diagram by Lemma \ref{isomorphism for homomorphism with twsited structures}
$$
\xymatrix{
\uHom_C((C\otimes W_{h_B})^{\phi_{h_B}},C)\ar[d]^{\cong}&\cdots\ar[l]&\uHom_C((C\otimes W_1)^\sigma,C)\ar[l]\ar[d]^{\cong}&\uHom_C(C,C)\ar[l]\ar[d]^{\cong}\\
{^{\phi_{h_B}}\left(\uHom_B(B\otimes W_{h_B},B)\otimes^{\tau^{-1}}\!\!A\right)}&\cdots\ar[l]&{^{\sigma}\left(\uHom_B(B\otimes W_{1},B)\otimes^{\tau^{-1}}\!\!A\right)}\ar[l]&\uHom_B(B,B)\otimes^{\tau^{-1}}\!\!A,\ar[l]
}
$$
and the bottom cochain complex is just $\uHom_B(Q^\centerdot, B)\otimes^{\tau^{-1}}\!\!A$, where $Q^\centerdot$ is the minimal resolution of ${_Bk}$, when forgetting the graded left $A$-module structures. Since the functor $-\otimes^{\tau^{-1}}\!\!A$ is exact and $B$ is AS-regular, the result follows.
\end{proof}

The following is an easy example to explain why we use the word ``determinant''.
\begin{example}\label{Example:det sigma=classical determinant of matrices}
Let $A$ be a connected graded algebra and $B=k[y_1,\cdots,y_m]$ be a polynomial algebra with $\deg y_i=l$ where $l\geq 1$. Let $\sigma:A\to \mathbb{M}_m(A)$ be a graded invertible algebra homomorphism such that $A\otimes^{\tau}B$ is a twisted tensor product where $\tau=(\sigma,0)$. Polynomial algebra $B$ is Koszul AS-regular of type $(m,ml)$, so the determinant of $\sigma$ exists and needs to construct the complex $F_\phi^\centerdot$ by Lemma \ref{det sigma--AS-reg}. Firstly, one obtains a minimal free resolution of $_Bk$
$$
(\cdots\to0\to B\otimes W_{m}\xlongrightarrow{d^{-m}}  B\otimes W_{m-1}\xlongrightarrow{d^{-m+1}}\cdots\xlongrightarrow{d^{-2}} B\otimes W_{1}\xlongrightarrow{d^{-1}} B\to 0\to\cdots)\quad\xlongrightarrow{\varepsilon_B}\quad {_Bk},
$$
where each graded vector space $W_i= k^{\oplus t_{i}}(-il)$ and $t_i=\binom{m}{i}$ with a canonical basis  $\{e_{j_1j_2\cdots j_i}\,|\,(j_1,\cdots,j_i)\in J_i\}$, index set $J_i=\{(j_1,\cdots,j_i)\,|\, j_1,\cdots,j_i=1,\cdots, m \text{ and } j_1<j_2<\cdots<j_i\}$, and each differential $d^{-i}$ satisfies
$$
d^{-i}(1\otimes e_{j_1j_2\cdots j_i})=\sum_{u=1}^i (-1)^{(u-1)}y_{j_u}\otimes e_{j_1\cdots j_{u-1}j_{u+1} \cdots j_i}.
$$

To make the following steps clear, it is necessary to sort the elements in each index set $J_i$. For any $(j_1,\cdots,j_i)$ and $(j'_1,\cdots,j'_i)$ in $J_i$, we say $(j_1,\cdots,j_i)<(j'_1,\cdots,j'_i)$ if there exists a positive integer $t$ such that $j_p=j'_p$ for $p<t$ and $j_t>j'_t$. According to the ordering $<$, we define a sorting map $\chi_i:J_i\to \{1,\cdots,\binom{m}{i}\}$ by sending the largest element $(1,2,\cdots,i)$ to $1$, second largest element $(1,\cdots,i-1,i+1)$ to $2$, and so forth.

The required complex can be acquired by acting the functor $A\otimes^{\tau}-$ to the minimal resolution above
$$
0\to ((A\otimes^{\tau}B)\otimes W_{m})^{\psi_m}\xlongrightarrow{A\otimes^{\tau}d^{-m}}  ((A\otimes^{\tau}B)\otimes W_{m-1})^{\psi_{m-1}}\xlongrightarrow{A\otimes^{\tau}d^{-m+1}}\cdots\xlongrightarrow{A\otimes^{\tau}d^{-2}} ((A\otimes^{\tau}B)\otimes W_{1})^{\sigma}\xlongrightarrow{A\otimes^{\tau}d^{-1}} B\to 0,
$$
where $\psi_{i}=((\psi_{i})_{pq}):A\to\mathbb{M}_{t_i}(A)$ is a graded algebra homomorphism with entry $(\psi_{i})_{pq}$ being the classical determinant of the submatrix formed by the $\chi_{i}^{-1}(p)$-th rows and $\chi_{i}^{-1}(q)$-th columns of $\sigma=(\sigma_{ij})$. Using the map $\chi_i$ to sort the basis $\{e_{j_1j_2\cdots j_i}\}_{J_i}$ of $W_i$, then the graded $\psi_i$-twisted right $A$-module structure is clear for $i=2,\cdots,m$.

In particular, $\psi_m$ is just the classical determinant of the matrix $\sigma=(\sigma_{ij})$. In other side, $\psi_m=\det \sigma$ by Lemma \ref{det sigma--AS-reg}.  That is to say, the classical determinant of matrix coincides with the one defined above in this case.  Thus the determinant given in Definition \ref{determinant--definition} is considered as a generalization of classical determinants for matrices.
\end{example}

\section{Ext-algebras of twisted tensor products}\label{Section ext-algebras}
In this section, we focus on the Yoneda products of Ext-algebras of twisted tensor products and prove the main result Theorem \ref{main theorem:ext algebras}, which is a more general context of \cite[Theorem 3.7]{SWZ}. Let $A, B$ be connected graded algebras and graded linear map $\tau=(\sigma,0):B\otimes A\to A\otimes B$ satisfy Hypothesis \ref{hypothesis on twisted tensor products}. We adopt the notations used in Section \ref{Section twisted tensor products}. Write $C:=A\otimes^{\tau}B$.

The minimal resolutions of $_Ak$, $_Bk$ and $_CA_A$ are still denoted by $P^\centerdot$, $Q^\centerdot$ and $F^\centerdot_\phi$ respectively. By Proposition \ref{minimal resolution of Ck}, $F_\phi^\centerdot\otimes_AP^\centerdot$ is a minimal free resolution of $_Ck$. Starting from those minimal free resolutions, we have
\begin{align*}
&E(A)=\bigoplus\nolimits_{i\geq0}\uHom_{A}(P^{-i},k)\cong\bigoplus\nolimits_{i\geq0} V_i^*,\\
&E(B)=\bigoplus\nolimits_{i\geq0}\uHom_{A}(Q^{-i},k)\cong \bigoplus\nolimits_{i\geq0}W_i^*,\\
&E(C)=\bigoplus\nolimits_{i\geq0}\uHom_{A}\left(\left(F_\phi^\centerdot\otimes_AP^\centerdot\right)^{-i},k\right)\cong \bigoplus\nolimits_{i\geq0}\bigoplus\nolimits_{p+q=i}W_p^*\otimes V_q^*.
\end{align*}
In the sequel, we identify the Ext-algebras $E^i(A), E^i(B)$ and $E^i(C)$ with $V_i^*,W_i^*$ and $\bigoplus_{p+q=i}W_p^*\otimes V_q^*$ for each $i\geq0$ respectively.
\begin{remark}\label{a note for E(A), E(B) and E(C)}
We treat Ext-algebras $E(A)$, $E(B)$ and $E(C)$ as graded $k$-dual spaces of some graded vector spaces to make notations uniform. However,  we need to keep in mind that each element in such identifications for Ext-algebras has a corresponding representation through minimal free resolutions as in \cite[Remark 3.3]{SWZ}.

For any homogeneous element $f\in (V^*_i)_{-t}=E^i(A)_{-t}$, there is a corresponding morphism  $\alpha:=P^{\centerdot}\to k(-t)[i]$ in $\Ch(\Gr A)$ induced by $\varepsilon_A\otimes f: A\otimes V_i\to {_Ak}(-t)$. Similarly, to the homogeneous elements in $E^i(B)$.

In other side, for any $g\otimes f\in (W^*_{j})_{-s}\otimes (V^*_{i})_{-t}\subseteq E^{i+j}(C)_{-s-t}$, there is a corresponding morphism  $\gamma:=F^{\centerdot}_\phi\otimes_AP^\centerdot\to {_Ck}(-s-t)[i+j]$ in $\Ch(\Gr C)$ induced by
$$
(C\otimes W_j)^{\phi_j}\otimes_A(A\otimes V_{i})\xlongrightarrow{(C\otimes W_j)^{\phi_j}\otimes(A\otimes f)}(C\otimes W_j)(-t)\xlongrightarrow{(\pi_B\otimes W_j)(-t)} (B\otimes W_j)(-t)\xlongrightarrow{(\varepsilon_B\otimes g)(-t)} {_Ck}(-s-t).$$
Such representations are useful to prove results below.
\end{remark}

We are aimed at describing the connections between $E(A), E(B)$ and $E(C)$.  For this purpose, we follow the action on morphisms of the functor $E(-)$ to characterize injections $E(\pi_A)$ and $E(\pi_B)$ through the minimal free resolutions $P^\centerdot$, $Q^\centerdot$ and $F^\centerdot_\phi\otimes_AP^\centerdot$.

Since the algebra homomorphism $\pi_A:C\to A$ makes all $A$-modules be $C$-modules, so the cochain complex $P^\centerdot$ is also an object of $\Ch(\Gr C)$. And $\pi_A:F^\centerdot_\phi\to A$ is a morphism of cochain complexes in $\Ch(\Gr\,(C\otimes A^o))$, so we have a morphism in $\Ch(\Gr C)$:
$$
\widetilde{\pi}_A: F_\phi^\centerdot\otimes_A P^\centerdot\xlongrightarrow{\pi_A\otimes_AP^\centerdot} A\otimes_AP^\centerdot\cong P^\centerdot,
$$
lifting $\pi_A$ such that $\varepsilon_A\circ \widetilde{\pi}_A=\varepsilon_C$.

In other side, the composition $A\xlongrightarrow{\iota_A}C\xlongrightarrow{\pi_B}B$ of graded algebra homomorphisms endows each graded right $B$-module with a trivial graded right $A$-module structure. Then $(B\otimes W_i)^{\phi_{i}}\cong B\otimes W_i$ as graded $B\otimes A^o$-modules. One obtains a morphism of cochain complexes $\widetilde{\pi}'_B:F^\centerdot_\phi\to Q^\centerdot$ in $\Ch(\Gr\,(C\otimes A^o))$, of which each $i$-th component is $\pi_B\otimes W_i$, satisfying $\varepsilon_A\circ\pi_A=\varepsilon_B\circ\widetilde{\pi}'_B$. Now we have a composition of morphisms lifting $\pi_B$ in $\Ch(\Gr C)$:
$$
\widetilde{\pi}_B: F_\phi^\centerdot\otimes_A P^\centerdot\xlongrightarrow{\widetilde{\pi}_B'\otimes_AP^\centerdot} Q^\centerdot\otimes_AP^\centerdot\xlongrightarrow{\mathrm{pr}} Q^\centerdot\otimes_AA\cong Q^\centerdot,
$$
where $\mathrm{pr}:Q^\centerdot\otimes_AP^\centerdot\to Q^\centerdot\otimes_A A$ is a projection of cochain complexes by the minimality of $P^\centerdot$. Clearly, $\varepsilon_B\circ\widetilde{\pi_B}=\varepsilon_C$.

%Consider the following compositions of morphisms in $\Gr C$,
%$$
%\eta^{-i}:(C\otimes W_i)^{\phi_i}\otimes_A A\xlongrightarrow{\cong} C\otimes W_i\xlongrightarrow{\pi_B\otimes W_i}B\otimes W_i,\quad\text{for } i\geq 0,
%$$
%where $\phi_0=\sigma$. The set $\{\eta^{-i}\}_{i\geq0}$ of morphisms forms a morphism of cochain complexes $\eta:F^\centerdot_\phi\otimes_AA\to Q^\centerdot$. Then we have the composition of morphisms
%$$
%\widetilde{\pi}_B:F^\centerdot_\phi\otimes_AP^\centerdot\twoheadrightarrow F^\centerdot_\phi\otimes_A A\xlongrightarrow{\eta}Q^\centerdot,
%$$
%which is a morphism of cochain complexes in $\Ch (\Gr C)$, because of the minimality of $Q^\centerdot$ and the differential $d_F=A\otimes^{\tau}d_Q$.

\begin{lemma}\label{Images of E(pi_A) and E(pi_B)}
For each $i\geq0$, let $f\in V_i^*$ and $g\in W^*_i$, then
$$
E(\pi_A)(f)=1\otimes f\in W_0^*\otimes V_i^*\subseteq E^i(C)\quad\text{and}\quad
E(\pi_B)(g)=g\otimes 1\in W_i^*\otimes V_0^*\subseteq E^i(C).
$$
\end{lemma}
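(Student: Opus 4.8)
The plan is to unwind the definition of the functor $E(-)$ on morphisms using the explicit liftings $\widetilde{\pi}_A$ and $\widetilde{\pi}_B$ constructed just above the statement, together with the representations of Ext-classes recorded in Remark \ref{a note for E(A), E(B) and E(C)}. Recall that for a graded algebra homomorphism $f:A\to B$ the map $E(f)$ is computed by restricting scalars along $f$ and then precomposing with a chosen lift $\widetilde{f}$ of $f$ to the minimal free resolutions. So $E(\pi_A)$ is $\uHom_A(\widetilde{\pi}_A,k)$ up to the canonical identifications, and $E(\pi_B)$ is $\uHom_B(\mathrm{pr}\circ(\widetilde{\pi}'_B\otimes_AP^\centerdot),k)$.

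First I would treat $E(\pi_A)$. Take $f\in V_i^*=E^i(A)_{-t}$, represented by the chain map $\alpha:P^\centerdot\to k(-t)[i]$ induced by $\varepsilon_A\otimes f:A\otimes V_i\to {}_Ak(-t)$. Then $E(\pi_A)(f)$ is represented by the composite $F^\centerdot_\phi\otimes_AP^\centerdot\xrightarrow{\widetilde{\pi}_A}P^\centerdot\xrightarrow{\alpha}k(-t)[i]$, and $\widetilde{\pi}_A=\pi_A\otimes_AP^\centerdot$ followed by the canonical isomorphism $A\otimes_AP^\centerdot\cong P^\centerdot$. In homological degree $i$ the relevant term of $F^\centerdot_\phi\otimes_AP^\centerdot$ contributing to $W_0^*\otimes V_i^*$ is $C\otimes_A(A\otimes V_i)=C\otimes W_0\otimes_A(A\otimes V_i)$, and chasing an element through $\widetilde{\pi}_A$ and then $\alpha$ shows the induced functional on $W_0^*\otimes V_i^*$ is exactly the one sending $e_0^*\otimes(-)$ to $f$, i.e. the element $1\otimes f$. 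One also checks that the components of $\widetilde{\pi}_A$ landing in $W_p$ with $p\geq 1$ vanish because $\pi_A$ kills $B_{\geq 1}$, so no other bigraded pieces of $E^i(C)$ are hit. I would compare this with the representation of $1\otimes f\in W_0^*\otimes V_i^*$ given in Remark \ref{a note for E(A), E(B) and E(C)}: there $g=e_0^*$, $\pi_B$ and $\varepsilon_B$ act as the identity on $B\otimes W_0=B$, and the chain map displayed there collapses precisely to $\alpha$ composed with the projection onto the $W_0$-summand, which is $\widetilde{\pi}_A$ followed by $\alpha$. This identifies the two.

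Next, for $E(\pi_B)$, take $g\in W_i^*=E^i(B)_{-s}$, represented by the chain map $\beta:Q^\centerdot\to k(-s)[i]$ induced by $\varepsilon_B\otimes g$. Then $E(\pi_B)(g)$ is represented by $F^\centerdot_\phi\otimes_AP^\centerdot\xrightarrow{\widetilde{\pi}'_B\otimes_AP^\centerdot}Q^\centerdot\otimes_AP^\centerdot\xrightarrow{\mathrm{pr}}Q^\centerdot\xrightarrow{\beta}k(-s)[i]$. The $i$-th component of $\widetilde{\pi}'_B$ is $\pi_B\otimes W_i:C\otimes W_i\to B\otimes W_i$, and $\mathrm{pr}$ projects $Q^\centerdot\otimes_AP^\centerdot$ onto its $Q^\centerdot\otimes_AA$-summand, so only the $V_0=k$ part of $P^\centerdot$ survives; in homological degree $i$ the surviving term is $(C\otimes W_i)\otimes_A(A\otimes V_0)$, which is exactly the $W_i^*\otimes V_0^*$ piece of $E^i(C)$. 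Chasing through shows the induced functional is $g\otimes e_0^*=g\otimes 1$, and again matching against the Remark's representation (now with $j=i$, $q=0$, $f=\varepsilon_A$ on $A\otimes V_0$, and the chain map there reducing to $\beta\circ\mathrm{pr}\circ(\widetilde{\pi}'_B\otimes_AP^\centerdot)$) completes the identification. I would also note the compatibility $\varepsilon_B\circ\widetilde{\pi}_B=\varepsilon_C$ and $\varepsilon_A\circ\widetilde{\pi}_A=\varepsilon_C$ recorded above, which is what guarantees these composites are legitimate lifts and hence compute $E(\pi_A),E(\pi_B)$ on the nose.

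The main obstacle I anticipate is purely bookkeeping: keeping the several canonical isomorphisms ($A\otimes_AP^\centerdot\cong P^\centerdot$, $Q^\centerdot\otimes_AA\cong Q^\centerdot$, $\uHom_C((C\otimes W_j)^{\phi_j}\otimes_A(A\otimes V_i),k)\cong W_j^*\otimes V_i^*$) consistent with the sign and twisting conventions, and making sure that the $\phi_j$-twist on $(C\otimes W_j)^{\phi_j}$ does not interfere — it does not, because after applying $\varepsilon_B$ (equivalently projecting to $k$) the twisted and untwisted structures on $k^{\oplus s_j}$ agree, exactly as was used for $F^\centerdot_\phi$ itself. Once the identifications are pinned down, the computation is a short diagram chase; there is no genuine difficulty beyond notational care.
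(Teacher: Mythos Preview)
Your proposal is correct and follows essentially the same argument as the paper: represent $f$ (resp.\ $g$) by the chain map $\alpha$ (resp.\ $\beta$), compute $E(\pi_A)(f)=\alpha\circ\widetilde{\pi}_A$ (resp.\ $E(\pi_B)(g)=\beta\circ\widetilde{\pi}_B$), observe that only the summand $C\otimes_A(A\otimes V_i)$ (resp.\ $(C\otimes W_i)^{\phi_i}\otimes_A A$) contributes, and match the resulting map with the description of $1\otimes f$ (resp.\ $g\otimes 1$) in Remark~\ref{a note for E(A), E(B) and E(C)}. Your additional remarks on the $\phi_j$-twist and on bookkeeping are correct but not needed for the argument; the paper simply omits them.
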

\begin{proof}Without loss of generality, we assume $f\in (V_i^*)_{-t}$ is a homogeneous element.
Let $\alpha:P^\centerdot\to {_Ak}(-t)[i]$ be the corresponding morphism of cochain complexes to $f$ by Remark \ref{a note for E(A), E(B) and E(C)}, whose $(-i)$-th component is $\varepsilon_A\otimes f:A\otimes V_i\to {_Ak}(-t)$.

Following the definition of $E(\pi_A)$ above the Lemma \ref{Ext-algebra arise a functor}, we have $E(\pi_A)(\alpha)=\alpha\circ\widetilde{\pi}_A$. However, the nonzero values of $(-i)$-th component of $\widetilde{\pi}_A$ only take from the direct summand $C\otimes_A (A\otimes V_i)$ of the $(-i)$-th term of $F^\centerdot_\phi\otimes_AP^\centerdot$. Hence, the only nonzero part of $E(\pi_A)(\alpha)$ equals the composition of following morphisms
$$
C\otimes_A(A\otimes V_i)\xlongrightarrow{\pi_A\otimes_A (A\otimes V_i)} A\otimes_A(A\otimes V_i)\cong A\otimes V_i\xlongrightarrow{\varepsilon_A\otimes f}{_Ck}.
$$
Then $E(\pi_A)(f)=1\otimes f$ corresponds to $E(\pi_A)(\alpha)$ when identifying $E^i(C)=\bigoplus_{j=0}^iW^*_{j}\otimes V^*_{i-j}$ by Remark \ref{a note for E(A), E(B) and E(C)}.

Similarly, we have $E(\pi_B)(g)=g\otimes 1\in W_i^*\otimes V_0^*$ by the morphism $\widetilde{\pi}_B$ in $\Ch (\Gr C)$.
\end{proof}

Next step is to compute the Yoneda products of images of $E(\pi_A)$ and ones of $E(\pi_B)$ in $E(C)$. We divide it into three cases.

\subsection{General case} We prove the first main result of this paper here. Before that, we describe one case of Yoneda products in $E(C)$ under Hypothesis \ref{hypothesis on twisted tensor products} without additional conditions.

\begin{lemma}\label{The Yoneda product of E(pi_B) and E(pi_A)}
Let $f\in V_i^*, g\in W_j^*$, then
$$
(g\otimes 1)\cdot(1\otimes f)=(-1)^{ij}g\otimes f\in W_j^*\otimes V_i^*\subseteq E^{i+j}(C).
$$
\end{lemma}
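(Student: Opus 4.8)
The plan is to compute the Yoneda product $(g\otimes 1)\cdot(1\otimes f)$ directly using the chain-level representatives described in Remark \ref{a note for E(A), E(B) and E(C)}, together with the chain maps $\widetilde{\pi}_A$ and $\widetilde{\pi}_B$ constructed just before Lemma \ref{Images of E(pi_A) and E(pi_B)}. Recall that the Yoneda product of two Ext classes is realized as follows: given $\beta\colon F_\phi^\centerdot\otimes_A P^\centerdot\to {_Ck}(-s)[j]$ representing $g\otimes 1$ and $\alpha\colon F_\phi^\centerdot\otimes_A P^\centerdot\to {_Ck}(-t)[i]$ representing $1\otimes f$, one lifts $\beta$ along the minimal resolution to a chain map $\widetilde{\beta}\colon F_\phi^\centerdot\otimes_A P^\centerdot\to (F_\phi^\centerdot\otimes_A P^\centerdot)(-s)[j]$ and then forms $\alpha(-s)[j]\circ\widetilde{\beta}$, which represents the product (up to the Koszul sign $(-1)^{ij}$ coming from shifting). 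So the real content is: identify a convenient lift $\widetilde{\beta}$ of the class $g\otimes 1 = E(\pi_B)(g)$, and then see what $\alpha$ does to it.

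The key observation I would exploit is that $g\otimes 1$ is in the image of $E(\pi_B)$, so by functoriality its chain-level representative factors through $\widetilde{\pi}_B\colon F_\phi^\centerdot\otimes_A P^\centerdot\to Q^\centerdot$. More precisely, writing $\beta_g\colon Q^\centerdot\to {_Bk}(-s)[j]$ for the representative of $g\in W_j^*$ in $E(B)$ (induced by $\varepsilon_B\otimes g$), we have $\beta = \beta_g\circ\widetilde{\pi}_B$ as a morphism in $\Ch(\Gr C)$. Now a natural lift of $\beta_g$ along $Q^\centerdot$ itself is the chain map $\widetilde{\beta_g}\colon Q^\centerdot\to Q^\centerdot(-s)[j]$ obtained from a comparison-theorem lift of $g$; composing with the structure maps, one builds $\widetilde{\beta}\colon F_\phi^\centerdot\otimes_A P^\centerdot\to (F_\phi^\centerdot\otimes_A P^\centerdot)(-s)[j]$ whose $Q$-component is $\widetilde{\beta_g}\otimes_A P^\centerdot$ and which is the identity (suitably shifted) on the $P^\centerdot$ tensor-factor. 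Then I would compose with $\alpha$, which by the formula in Remark \ref{a note for E(A), E(B) and E(C)} only sees the $(W_0\otimes V_i)$-summand through $(\varepsilon_B\otimes 1)\otimes(A\otimes f)$ after projecting via $\pi_B\otimes W_0$. Tracking the bidegrees: $\widetilde{\beta}$ pushes the class into the $W_j^*$-part and $\alpha$ reads off the $V_i^*$-part, and the composite lands exactly in $W_j^*\otimes V_i^*\subseteq E^{i+j}(C)$, with coefficient $1$ on the relevant basis element. The sign $(-1)^{ij}$ is then forced by the Koszul sign convention adopted in the paper (interchanging the homological degree $j$ shift past the degree-$i$ map $\alpha$), exactly as in the cited $\cite[\text{Theorem }3.7]{SWZ}$.

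The step I expect to be the main obstacle is making the chain-level lift $\widetilde{\beta}$ genuinely explicit and checking it is a chain map of the twisted complex $F_\phi^\centerdot$, because the differentials $d_F^{-i}$ involve the twisted right $A$-module structures $\phi_i$ and one must verify that the $\phi$-twisting does not obstruct lifting a right-$B$-module chain map $\widetilde{\beta_g}$ to a $C$-module chain map on $F_\phi^\centerdot\otimes_A P^\centerdot$. Here the key point is that $\widetilde{\pi}'_B\colon F_\phi^\centerdot\to Q^\centerdot$ kills all the $\phi_i$-twisting (since $(B\otimes W_i)^{\phi_i}\cong B\otimes W_i$ as $B\otimes A^o$-modules, because the composite $A\to C\to B$ makes $A$ act trivially), so pulling back $\widetilde{\beta_g}$ along $\widetilde{\pi}_B$ is unproblematic; the verification reduces to the compatibility of $\widetilde{\pi}_B$ with differentials, which was already established when $\widetilde{\pi}_B$ was constructed. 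After that, the remaining work — matching up basis elements of $\bigoplus_{p+q}W_p^*\otimes V_q^*$ and extracting the sign from the $[j]$-shift — is routine bookkeeping with the Koszul rule, and I would present it compactly rather than in full detail.
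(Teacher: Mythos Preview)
There is a genuine gap here, at two related points.

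First, the Yoneda order is reversed. With the convention used throughout the paper (visible in the proofs of the neighbouring Lemmas), the product $u\cdot v$ is computed by lifting the \emph{right} factor $v$ to a self-map of the resolution and then composing with $u$ shifted. Your composite $\alpha(-s)[j]\circ\widetilde{\beta}$, which lifts $\beta=g\otimes 1$ and then applies $\alpha=1\otimes f$, therefore represents $(1\otimes f)\cdot(g\otimes 1)$, not $(g\otimes 1)\cdot(1\otimes f)$. These are genuinely different: the ``other'' product is precisely what Lemmas~\ref{E^1(A) *E^hB(B)} and~\ref{E^hA(A) * E1(B)} compute, and there the answer involves $\det\sigma$ and $\hdet\sigma$, not merely a sign.

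Second, even setting the order aside, the proposed lift does not land where you need it to. Pulling $\widetilde{\beta_g}\colon Q^\centerdot\to Q^\centerdot(-s)[j]$ back along $\widetilde{\pi}'_B$ produces a map $F_\phi^\centerdot\to Q^\centerdot(-s)[j]$, not one into $F_\phi^\centerdot(-s)[j]$; there is no natural map $Q^\centerdot\to F_\phi^\centerdot$ to close the triangle. The alternative candidate $A\otimes^\tau\widetilde{\beta_g}\colon F_\phi^\centerdot\to F_\phi^\centerdot(-s)[j]$ is a $C$-linear chain map, but there is no reason for it to be right $A$-linear with respect to the $\phi_i$-twists, so you cannot form $(A\otimes^\tau\widetilde{\beta_g})\otimes_A P^\centerdot$. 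Your ``key point'' that $\widetilde{\pi}'_B$ kills the twisting is true, but it only helps you leave $F_\phi^\centerdot$, not return to it.

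The paper avoids both difficulties by lifting on the $A$-side instead: take any $\widetilde{\alpha}\colon P^\centerdot\to P^\centerdot(-t)[i]$ in $\Ch(\Gr A)$ over $\alpha$, form $F_\phi^\centerdot\otimes_A\widetilde{\alpha}$ (automatically a $C$-module chain map, because $F_\phi^\centerdot$ carries the right $A$-structure by design), insert the sign isomorphism $\lambda\colon F_\phi^\centerdot\otimes_A(P^\centerdot(-t)[i])\to (F_\phi^\centerdot\otimes_A P^\centerdot)(-t)[i]$, and then apply $E(\pi_B)(\beta)(-t)[i]$. The $\phi$-twisting never intervenes, the Yoneda order is correct, and the $(-1)^{ij}$ drops out of $\lambda$.
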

\begin{proof}
Without loss of generality, we assume $f\in (V^*_i)_{-t}$  and $g \in (W^*_j)_{-l_j}$ are both homogeneous elements. Write $\alpha:P^\centerdot\to{_Ak}(-t)[i]$ and $\beta:Q^\centerdot\to {_Bk}(-l_j)[j]$ to be the corresponding morphisms of $f$ and $g$ respectively. By the proof of Lemma \ref{Images of E(pi_A) and E(pi_B)} and Remark \ref{a note for E(A), E(B) and E(C)}, $E(\pi_A)(\alpha):F^\centerdot_\phi\otimes_AP^\centerdot\to {_Ck}(-t)[i]$ in $\Ch(\Gr C)$ corresponds to $1\otimes f\in W_0^*\otimes V^*_i$ induced by
$$
\pi_A\otimes_A(\varepsilon_A\otimes f):C\otimes_A(A\otimes V_i) \to {_Ck(-t)},
$$
and $E(\pi_B)(\beta):F^\centerdot_\phi\otimes_AP^\centerdot\to {_Ck}(-l_j)[j]$ in $\Ch(\Gr C)$ corresponds to $g\otimes 1\in W^*_j\otimes V_0^*$ induced by
$$
(C\otimes W_j)^{\phi_j}\otimes_AA\cong C\otimes W_j\xlongrightarrow{\pi_B\otimes W_j}B\otimes W_j\xlongrightarrow{\varepsilon_B\otimes g} {_Ck}(-l_j).
$$

By Comparison Theorem, choose a morphism $\widetilde{\alpha}:P^\centerdot\to P^\centerdot(-t)[i]$ in $\Ch(\Gr A)$ such that $\varepsilon_A(-t)[i]\circ\widetilde{\alpha}=\alpha$. To be specific, the $(-i)$-th component of $\widetilde{\alpha}$ is $A\otimes f:A\otimes V_i\to A(-t)$.

Now consider the following commutative diagram in $\Ch(\Gr C)$:
$$
\xymatrix{
F^\centerdot_\phi\otimes_A (P^\centerdot(-t)[i])\ar[rd]^{\lambda}&F^\centerdot_\phi\otimes_A P^\centerdot
%\ar[r]^(0.6){\pi_A\otimes_AP^\centerdot}
\ar[d]_{\psi}
\ar[rd]^{E(\pi_A)(\alpha)}
\ar[l]_(0.4){F^\centerdot_\phi\otimes_A\widetilde{\alpha}}
&%P^\centerdot
%\ar[d]^{\alpha}
\\
&(F^\centerdot_\phi\otimes_A P^\centerdot)(-t)[i]
\ar[r]^(0.6){\varepsilon_C(-t)[i]}
\ar[d]_{E(\pi_B)(\beta)(-t)[i]}
&{_Ck}(-t)[i]\\
&_Ck(-t-l_j)[i+j],
}
$$
where $\lambda: F^\centerdot_\phi\otimes_A (P^\centerdot(-t)[i])\to (F^\centerdot_\phi\otimes_A P^\centerdot)(-t)[i]$ acts on each summand $F^{-j}_\phi\otimes_AP^{-u}$ by multiplying $(-1)^{ij}$ for $j,u\geq 0$ and $\psi=\lambda\circ (F^\centerdot_\phi\otimes_A\widetilde{\alpha})$. It is easy to check $\varepsilon_C(-t)[i]\circ \psi=E(\pi_A)(\alpha)$.

Then $E(\pi_B)(\beta)\cdot E(\pi_A)(\alpha)=E(\pi_B)(\beta)(-t)[i]\circ \psi,$ and the nonzero part equals composition of following morphisms
$$
(C\otimes W_j)^{\phi_j}\otimes_A (A\otimes V_i)\xlongrightarrow{(-1)^{ij}(C\otimes W_j)^{\phi_j}\otimes_A (A\otimes f)} (C\otimes W_j)^{\phi_j}(-t)\xlongrightarrow{(\pi_B\otimes W_j)(-t)} (B\otimes W_j)(-t)\xlongrightarrow{(\varepsilon_B\otimes g)(-t)} {_Ck}(-t-l_j).
$$
Hence, $(g\otimes 1)(1\otimes f)=(-1)^{ij}g\otimes f$ by Remark \ref{a note for E(A), E(B) and E(C)}.
\end{proof}

\begin{theorem}\label{E(C) is a twisted tensor product of E(B) and E(A)}
There is a bigraded twisting map ${\tau_E}:E(A)\otimes E(B)\to E(B)\otimes E(A)$ satisfying
$$
m_{E(C)}\circ(E(\pi_B)\otimes E(\pi_A))\circ {\tau_E}=m_{E(C)}\circ(E(\pi_A)\otimes E(\pi_B)),
$$
where $m_{E(C)}$ is the Yoneda product of $E(C)$, such that as bigraded algebras,
$$
E(C)\cong E(B)\otimes^{\tau_E}\!E(A).
$$
\end{theorem}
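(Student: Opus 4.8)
The plan is to invoke the factorization theorem for twisted tensor products of bigraded algebras (Theorem \ref{fractorization theorem for twisted tensor product}) with $C' = E(C)$, taking as the two structure maps the bigraded algebra homomorphisms $E(\pi_B):E(B)\to E(C)$ and $E(\pi_A):E(A)\to E(C)$, both of which are injective by Corollary \ref{E(A) and E(B) are subalgebras}. According to that theorem, it suffices to check that the composite
$$
m_{E(C)}\circ\big(E(\pi_B)\otimes E(\pi_A)\big):E(B)\otimes E(A)\longrightarrow E(C)
$$
is an isomorphism of bigraded vector spaces; the desired twisting map $\tau_E$ is then produced automatically as $\big(m_{E(C)}\circ(E(\pi_B)\otimes E(\pi_A))\big)^{-1}\circ m_{E(C)}\circ(E(\pi_A)\otimes E(\pi_B))$, and the stated intertwining identity holds by construction. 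This reduces the theorem to a single bijectivity statement.

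To prove that bijectivity, I would combine the two computations already in hand. By Lemma \ref{Images of E(pi_A) and E(pi_B)}, for $f\in V_i^*$ and $g\in W_j^*$ we have $E(\pi_A)(f)=1\otimes f$ and $E(\pi_B)(g)=g\otimes 1$ inside $E(C)\cong\bigoplus_{p+q}W_p^*\otimes V_q^*$; by Lemma \ref{The Yoneda product of E(pi_B) and E(pi_A)}, their Yoneda product in the order $E(\pi_B)(g)\cdot E(\pi_A)(f)$ equals $(-1)^{ij}\,g\otimes f\in W_j^*\otimes V_i^*$. Hence $m_{E(C)}\circ(E(\pi_B)\otimes E(\pi_A))$ sends $g\otimes f$ to $(-1)^{ij}g\otimes f$, i.e. up to the nonzero Koszul sign $(-1)^{ij}$ it is exactly the canonical identification
$$
E(B)\otimes E(A)\;=\;\Big(\boplus_j W_j^*\Big)\otimes\Big(\boplus_i V_i^*\Big)\;\xrightarrow{\ \cong\ }\;\boplus_{i,j}W_j^*\otimes V_i^*\;=\;E(C),
$$
which is manifestly a bigraded linear isomorphism (the bidegree is preserved since both $E(\pi_A)$ and $E(\pi_B)$ are bigraded and the sign does not affect degrees). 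Multiplication by $(-1)^{ij}$ on the $(j,i)$-homogeneous pieces is a bijective bigraded map, so the composite is an isomorphism of bigraded vector spaces, as required.

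Finally I would record that the twisting map $\tau_E$ obtained this way is genuinely bigraded: $m_{E(C)}\circ(E(\pi_A)\otimes E(\pi_B))$ is a composition of bigraded maps, and the inverse of a bigraded isomorphism is bigraded, so $\tau_E:E(A)\otimes E(B)\to E(B)\otimes E(A)$ respects both degrees. The main obstacle is not conceptual but bookkeeping: one must be careful that the identification $E^i(C)\cong\bigoplus_{p+q=i}W_p^*\otimes V_q^*$ coming from the resolution $F_\phi^\centerdot\otimes_A P^\centerdot$ in Proposition \ref{minimal resolution of Ck} is the \emph{same} identification used in Lemmas \ref{Images of E(pi_A) and E(pi_B)} and \ref{The Yoneda product of E(pi_B) and E(pi_A)}, and that the Koszul sign convention is applied consistently so that the sign appearing in Lemma \ref{The Yoneda product of E(pi_B) and E(pi_A)} is absorbed harmlessly; once that is granted, the argument is immediate from Theorem \ref{fractorization theorem for twisted tensor product}.
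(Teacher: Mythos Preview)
Your proposal is correct and follows essentially the same approach as the paper: define $\rho=m_{E(C)}\circ(E(\pi_B)\otimes E(\pi_A))$, use Lemmas \ref{Images of E(pi_A) and E(pi_B)} and \ref{The Yoneda product of E(pi_B) and E(pi_A)} to see that $\rho$ is a bigraded linear isomorphism, and then apply the factorization theorem (Theorem \ref{fractorization theorem for twisted tensor product}) to produce $\tau_E=\rho^{-1}\circ m_{E(C)}\circ(E(\pi_A)\otimes E(\pi_B))$. Your extra remarks about the Koszul sign and the consistency of the identification $E^i(C)\cong\bigoplus_{p+q=i}W_p^*\otimes V_q^*$ are correct bookkeeping observations, but they add no new ingredient beyond what the paper uses.
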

\begin{proof}
By Corollary \ref{E(A) and E(B) are subalgebras}, we know that $E(\pi_A):E(A)\to E(C)$ and $E(\pi_B):E(B)\to E(C)$ are bigraded algebra homomorphisms. Write bigraded linear map
$$
\rho:=m_{E(C)}\circ (E(\pi_B)\otimes E(\pi_A)):E(B)\otimes E(A)\to E(C).
$$
Following from Lemma \ref{Images of E(pi_A) and E(pi_B)} and Lemma \ref{The Yoneda product of E(pi_B) and E(pi_A)}, $\rho$ is an isomorphism of bigraded vector spaces.  By Theorem \ref{fractorization theorem for twisted tensor product}, as bigraded algebras
$$
E(C)\cong E(B)\otimes^{\tau_E}\!E(A),
$$
where the bigraded twisting map ${\tau_E}=\rho^{-1}\circ m_{E(C)}\circ (E(\pi_A)\otimes E(\pi_B)):E(A)\otimes E(B)\to E(B)\otimes E(A)$.
\end{proof}

A good will is to give a specific formula for the bigraded twisting map ${\tau_E}$ in theorem above as the \cite[Theorem 3.7]{SWZ}. Unfortunately, we cannot make our wish come true, since it is hard to know what $(1\otimes f)\cdot (g\otimes 1)$ is where $f\in E(A)$ and $g\in E(B)$ in general. However, we may give a partially depict for such Yoneda products with AS-regularity in the following two subsections.

\subsection{When $B$ is AS-regular}We assume $B$ is AS-regular of type $(h_B,l_B)$ and $A$ is generated in degree $1$ in addition. In this case, let $A_1=V_1$ which is the graded vector space in minimal resolution (\ref{resolution of Ak}) of $_Ak$.

The minimal free resolution (\ref{resolution of Bk}) of trivial module $_Bk$ becomes
$$
Q^\centerdot:=(\cdots\to0\to B\otimes W_{h_B}\xlongrightarrow{d^{-h_B}_{Q}} B\otimes W_{h_B-1}\xlongrightarrow{d^{-h_B+1}_Q}\cdots  \xlongrightarrow{} B\otimes W_1\xlongrightarrow{d^{-1}_Q} B\to0\to\cdots)\quad \xlongrightarrow{\varepsilon_B}\quad {_Bk},
$$
where $W_{h_B}=k(-l_B)$. Recall the resolution (\ref{resolution of CAA--ASreg case}) of ${_CA_A}$ by Lemma \ref{det sigma--AS-reg},
\begin{equation*}
F_\phi^\centerdot = (\cdots\to 0 \to(C\otimes W_{h_B})^{\det \sigma} \xlongrightarrow{d_F^{-{h_B}}} (C\otimes W_{h_B-1})^{\phi_{h_B-1}}\xlongrightarrow{d_F^{-h_{B}+1}}\cdots \xlongrightarrow{d_F^{-2}} (C\otimes W_1)^\sigma \xlongrightarrow{d_F^{-1}} C\to 0)\xlongrightarrow{\pi_A} {_CA_A}.
\end{equation*}
Obviously, there are two morphisms of cochain complexes:
$$
\qquad\iota_0:C\to F^\centerdot_\phi,\qquad\quad \pi_{h_B}:F^\centerdot_\phi\to (C\otimes W_{h_B})^{\det \sigma}[h_B].
%\begin{array}{rl}
%\iota_0:&C\to F^\centerdot_\phi,\\
%\pi_{h_B}:&F^\centerdot_\phi\to (C\otimes W_{h_B})^{\det \sigma}[h_B].
%\end{array}
$$
\begin{lemma}\label{E^1(A) *E^hB(B)}
Let $f\in V_1^*$ and $g\in W_{h_B}^*$. If $\sigma$ is invertible, then
$$
(1\otimes f)\cdot (g\otimes 1)= g\otimes (f\circ (\det\sigma)_{|\,V_1})\in W_{h_B}^*\otimes V_1^*\subseteq E^{h_B+1}(C).
$$
\end{lemma}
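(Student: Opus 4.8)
The plan is a chain-level computation of the Yoneda product, in the style of the proof of Lemma~\ref{The Yoneda product of E(pi_B) and E(pi_A)}. To compute $(1\otimes f)\cdot(g\otimes 1)$ I would lift the right-hand factor $g\otimes 1$ to a chain endomorphism $\psi$ of the minimal resolution $F^\centerdot_\phi\otimes_A P^\centerdot$ of ${}_Ck$, of homological degree $h_B$ and internal degree $-l_B$, with $\varepsilon_C(-l_B)[h_B]\circ\psi=\gamma_{g\otimes 1}$, so that $(1\otimes f)\cdot(g\otimes 1)=(1\otimes f)(-l_B)[h_B]\circ\psi$. By Lemma~\ref{Images of E(pi_A) and E(pi_B)} and Remark~\ref{a note for E(A), E(B) and E(C)}, $\gamma_{g\otimes 1}$ is concentrated in homological degree $-h_B$, where it equals $(\varepsilon_B\otimes g)\circ(\pi_B\otimes W_{h_B})$ on the summand $(C\otimes W_{h_B})^{\det\sigma}\otimes_A A\cong C\otimes W_{h_B}$ and vanishes elsewhere. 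The hypotheses that $B$ is AS-regular of type $(h_B,l_B)$ and that $\sigma$ is invertible enter here in an essential way: by Theorem~\ref{resolution of A with bimodule structure} and Lemma~\ref{det sigma--AS-reg} they force $F^\centerdot_\phi$ to have length exactly $h_B$ with one-dimensional top term $(C\otimes W_{h_B})^{\det\sigma}$, and the $\det\sigma$-twist on that top term is exactly what will produce the stated answer.

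Since $(1\otimes f)(-l_B)[h_B]$ is supported, in homological degree $-h_B-1$, on the summand $C\otimes_A(A\otimes V_1)$ of $(F^\centerdot_\phi\otimes_A P^\centerdot)^{-1}$, where it acts by $c\otimes_A(a\otimes v)\mapsto\varepsilon_C(c)\,\varepsilon_A(a)\,f(v)$ (as in the proof of Lemma~\ref{Images of E(pi_A) and E(pi_B)}), only the components $\psi^{-h_B}$ and $\psi^{-h_B-1}$ are relevant; the remaining components exist by the Comparison Theorem but do not affect the product. I would take $\psi^{-h_B}$ to be $\mathrm{id}_C\otimes g$ on $(C\otimes W_{h_B})^{\det\sigma}\otimes_A A$ (this is essentially $\mathrm{id}_C\otimes g$ composed with the projection $\pi_{h_B}$) and zero on the other summands; this is admissible because $\gamma_{g\otimes 1}$ kills $\mathrm{im}\,d_F^{-h_B-1}$. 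Now on the summand $(C\otimes W_{h_B})^{\det\sigma}\otimes_A(A\otimes V_1)$ of $(F^\centerdot_\phi\otimes_A P^\centerdot)^{-h_B-1}$ the chain-map equation pins down the relevant part of $\psi^{-h_B-1}$: only the $\mathrm{id}\otimes d_P^{-1}$ term of $d_F^{-h_B-1}$ reaches the summand on which $\psi^{-h_B}$ is nonzero, and since $d_P^{-1}(a\otimes v)=av$ ($A$ being generated in degree $1$) while the right $A$-action on $(C\otimes W_{h_B})^{\det\sigma}$ is twisted by $\det\sigma$, one is led to $\psi^{-h_B-1}\colon c\otimes w\otimes_A(a\otimes v)\mapsto g(w)\,(c\,\det\sigma(a))\otimes_A(1\otimes\det\sigma(v))$ into $C\otimes_A(A\otimes V_1)$; on all other summands $\psi^{-h_B-1}$ may be taken to be zero (consistent, as $\psi^{-h_B}\circ d_F^{-h_B-1}$ vanishes there), and the choice is anyway immaterial since $1\otimes f$ annihilates $\mathrm{im}\,d_F^{-2}$.

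Composing $\psi^{-h_B-1}$ with $(1\otimes f)(-l_B)[h_B]$ in homological degree $-h_B-1$ and using $\varepsilon_A\circ\det\sigma=\varepsilon_A$, the product $(1\otimes f)\cdot(g\otimes 1)$ is represented by the morphism $F^\centerdot_\phi\otimes_A P^\centerdot\to {}_Ck[h_B+1]$ carrying $c\otimes w\otimes_A(a\otimes v)$ in $(C\otimes W_{h_B})^{\det\sigma}\otimes_A(A\otimes V_1)$ to $g(w)\,\varepsilon_C(c)\,\varepsilon_A(a)\,f(\det\sigma(v))$ and vanishing on the other summands; by Remark~\ref{a note for E(A), E(B) and E(C)} this is precisely the representative of $g\otimes(f\circ(\det\sigma)_{|\,V_1})\in W_{h_B}^*\otimes V_1^*$, which gives the lemma. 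I expect the real difficulties to be twofold: keeping track of the Koszul signs introduced by the shift $[h_B]$ and the total differential and verifying that they cancel so that no sign survives in the final formula (contrast the $(-1)^{ij}$ in Lemma~\ref{The Yoneda product of E(pi_B) and E(pi_A)}, which there arose only from re-associating a shift inside a tensor product, a manoeuvre avoided here by lifting $g\otimes 1$ directly on the total complex); and confirming rigorously that the partial lift with the prescribed $\psi^{-h_B},\psi^{-h_B-1}$ extends to an honest chain endomorphism.
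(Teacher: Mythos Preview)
Your approach is correct and, at its core, is the same chain-level Yoneda computation the paper carries out: lift $E(\pi_B)(\beta)$ to a chain map $\psi:F^\centerdot_\phi\otimes_A P^\centerdot\to(F^\centerdot_\phi\otimes_A P^\centerdot)(-l_B)[h_B]$ and compose with $E(\pi_A)(\alpha)$. The difference is only in how the lift is produced. The paper factors $\psi$ as an explicit composite
\[
F^\centerdot_\phi\otimes_A P^\centerdot\xrightarrow{\pi_{h_B}\otimes_A P^\centerdot}\bigl((C\otimes W_{h_B})^{\nu}\otimes_A P^\centerdot\bigr)[h_B]\cong\bigl((C\otimes W_{h_B})\otimes_A{^{\nu^{-1}}\!P^\centerdot}\bigr)[h_B]\xrightarrow{\rho}\bigl((C\otimes W_{h_B})\otimes_A P^\centerdot\bigr)[h_B]\xrightarrow{g,\ \iota_0}(F^\centerdot_\phi\otimes_A P^\centerdot)(-l_B)[h_B],
\]
where $\nu=\det\sigma$ and $\rho:{^{\nu^{-1}}\!P^\centerdot}\to P^\centerdot$ is a comparison isomorphism of minimal resolutions of ${_Ak}$ with $\rho^{-1}=\nu\otimes\nu_{|V_1}$. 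Tracing this composite through degree $-h_B-1$ yields exactly your formula $c\otimes w\otimes_A(a\otimes v)\mapsto g(w)\,c\,\nu(a)\otimes_A(1\otimes\nu(v))$, so the two lifts coincide where it matters. The paper's packaging buys you well-definedness and the chain-map property for free (each factor is manifestly a chain map), whereas your direct construction requires the verifications you flag; conversely, your route is quicker once those checks are done. Your worry about signs is harmless: the $(-1)^{h_B}$ from the Koszul sign in the total differential on $F^{-h_B}\otimes_A P^{-1}$ cancels against the $(-1)^{h_B}$ in the differential of the shifted target, and your worry about extending $\psi$ past degree $-h_B-1$ is handled by the standard inductive step of the Comparison Theorem, since $d\circ\psi^{-h_B-1}\circ d=\psi^{-h_B}\circ d\circ d=0$.
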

\begin{proof}
Write $\nu=\det\sigma$. Let $\alpha:P^\centerdot\to {_Ak(-1)}[1]$ be the morphism of cochain complexes corresponding to $f$ and $\beta:Q^\centerdot \to {_Bk(-l_B)}[h_B]$ be the one corresponding to $g$. By Lemma \ref{Images of E(pi_A) and E(pi_B)} and Remark \ref{a note for E(A), E(B) and E(C)}, $E(\pi_A)(\alpha):F^\centerdot_\phi\otimes_AP^\centerdot\to {_Ck}(-1)[1]$ in $\Ch(\Gr C)$ corresponds to $1\otimes f\in W_0^*\otimes V^*_{1}$ induced by
$$
\pi_A\otimes_A(\varepsilon_A\otimes f):C\otimes_A(A\otimes V_1) \to {_Ck(-1)},
$$
and $E(\pi_B)(\beta):F^\centerdot_\phi\otimes_AP^\centerdot\to {_Ck}(-l_B)[h_B]$ in $\Ch(\Gr C)$ corresponds to $g\otimes 1\in W^*_{h_B}\otimes V_0^*$ induced by
$$
(C\otimes W_{h_B})^{\nu}\otimes_AA\cong C\otimes W_{h_B} \xlongrightarrow{\pi_B\otimes W_{h_B}}B\otimes W_{h_B}\xlongrightarrow{\varepsilon_B\otimes g} {_Ck}(-l_B).
$$

Clearly, the cochain complex ${^{\nu^{-1}} P^\centerdot}$ is also a minimal free resolution of $_Ak$ via quasi-isomorphism $\varepsilon_A$. For each $i$-th term of ${^{\nu^{-1}} P^\centerdot}$, it is obvious ${^{\nu^{-1}} (A\otimes V_i)}\cong ({^{\nu^{-1}} A})\otimes V_i $ as graded $A$-modules. One can choose an isomorphism of cochain complexes $\rho:{^{\nu^{-1}} P^\centerdot}\to P^\centerdot$ in $\Ch(\Gr A)$ such that $\varepsilon_A\circ \rho=\varepsilon_A$ and
$$
\begin{array}{ll}
\rho^{0}=\nu:&{^{\nu^{-1}}A}\to A,\\
\rho^{-1}=\nu\otimes\nu_{|\,V_1}:& (^{\nu^{-1}}A)\otimes V_1\to A\otimes V_1.
\end{array}
$$

The Yoneda product can be computed by the following commutative diagram in $\Ch(\Gr C)$:
$${\small
\xymatrix{   \left((C\otimes W_{h_B})^\nu\otimes_A P^\centerdot\right)[h_B]\ar[d]_{\cong}^{\eta_2}&
  (C\otimes W_{h_B})^\nu[h_B]\otimes_A P^\centerdot\ar[l]^{\cong}_{\eta_1}&
          F^\centerdot_\phi\otimes_AP^\centerdot\ar[l]_(0.35){\pi_{h_B}\otimes_A P^\centerdot}\ar[dd]^{\psi}\ar[rrdd]^{E(\pi_B)(\beta)}\\
            \left((C\otimes W_{h_B})\otimes_A{^{\nu^{-1}} P^\centerdot}\right)[h_B]\ar[d]^{\eta_3}& \\
          \left((C\otimes W_{h_B})\otimes_A P^\centerdot\right)[h_B]\ar[r]^{\eta_4}   &
          (C\otimes_AP^\centerdot)(-l_B)[h_B]\ar[r]^{\eta_5}
          &(F^\centerdot_\phi\otimes_AP^\centerdot)(-l_B)[h_B]\ar[rr]^(0.55){\varepsilon_C(-l_B)[h_B]}\ar[d]^{E(\pi_A)(\alpha)(-l_B)[h_B]}& &{_C}k(-l_B)[h_B]\\
          &&{_Ck}(-l_B-1)[h_B+1],&
}}
$$
where $\psi:=\eta_5\circ \eta_4\circ \eta_3 \circ \eta_2\circ\eta_1\circ (\pi_{h_B}\otimes_A P^\centerdot)$, the isomorphism $\eta_1$ is a natural shift of complexes without additional signs, the isomorphism $\eta_2$ acts on elements as the identity map, $\eta_3:=\left((C\otimes W_{h_B})\otimes_A\rho\right)[h_B]$, $\eta_4:=\left((C\otimes g)\otimes_A P^\centerdot\right)[h_B]$, and $\eta_5:=(\iota\otimes_AP^\centerdot)(-l_B)[h_B]$. It is easy to check that $\varepsilon_C(-l_B)[h_B]\circ \psi=E(\pi_B)(\beta)$.

Then $E(\pi_A)(\alpha)\circ E(\pi_B)(\beta)=E(\pi_A)(\alpha)(-l_B)[h_B]\circ \psi$, and the nonzero component equals composition of following morphisms in $\Gr C$
$$
(C\otimes W_{h_B})^\nu \otimes_A (A\otimes V_1)\cong (C\otimes W_{h_B})\otimes_A\, {^{\nu^{-1}}\!(A\otimes V_1) }\xlongrightarrow{(C\otimes g)\otimes_A(\nu\otimes\nu)} C\otimes_A(A\otimes V_1)(-l_B)\xlongrightarrow{(\pi_A\otimes_A(\varepsilon_A\otimes f))(-l_B)}{_Ck}(-l_B-1).
$$
Hence $(1\otimes f)\cdot (g\otimes 1)=g\otimes (f\circ \nu)\in W_{h_B}^*\otimes V_1^*\subseteq E^{h_B+1}(C)$ by Remark \ref{a note for E(A), E(B) and E(C)}.
\end{proof}

\subsection{When $A$ is AS-regular} In the final case, we assume $A$ is AS-regular of type $(h_A,l_A)$, and $\sigma$ is invertible. Write $\varphi=\sigma^{-1}$ and $U=k^{\oplus m}$ in the sequel.

The minimal resolution (\ref{resolution of Ak}) of $_Ak$ becomes:
$$
P^\centerdot:=(\cdots\to0\to A\otimes V_{h_A}\xlongrightarrow{d^{-h_A}_{P}} A\otimes V_{h_A-1}\xlongrightarrow{d^{-h_A+1}_P}\cdots  \xlongrightarrow{} A\otimes V_1\xlongrightarrow{d^{-1}_P} A\to0\to\cdots)\quad \xlongrightarrow{\varepsilon_A}\quad {_Ak},
$$
where $V_{h_A}=k(-l_A)$. Let $\omega$ be a basis of $V_{h_A}$. So the dual basis $\omega^*$is a basis of $E^{h_A}(A)= V_{h_A}^*$.

By the definition, the homological determinant of $\sigma$ comes form the morphism $\theta:{^\varphi (P^\centerdot\otimes U)}\to P^\centerdot\otimes U$ satisfying $\varepsilon_A^{\oplus m}\circ \theta=\varepsilon_A^{\oplus m}$. In the following, we use other morphisms to obtain homological determinant.

Let $\mathrm{pr_i}:{_Ak^{\,\oplus m}}\to{_Ak}$ be the natural projection of $i$-th position for $1\leq i\leq m$. Then there exists a morphism of cochain complexes in $\Ch(\Gr A)$
$$
\theta_{i}:{^\varphi (P^\centerdot\otimes U)} \to P^\centerdot,
$$
such that $\varepsilon_A\circ\theta_i=\mathrm{pr}_i\circ\varepsilon_A^{\oplus m}$ for $1\leq i\leq m$.

%\begin{remark}\label{representation of thetai}
%We may choose the $(-1)$-th component of morphism $\theta_i$, which is a graded $A$-module homomorphism from ${^{\varphi}(A\otimes U)}$ to $A$, sending $\sum_{j=1}^m a_j\otimes u_j$ to $\sum_{j=1}^m\sigma_{ij}(a_j)$, where $\{u_1,\cdots,u_m\}$ is the canonical coordinate basis of $U$ and $i=1,\cdots,m$.
%\end{remark}

Let $i$ be an arbitrary integer in the set $\{1,\cdots,m\}$. The action of functor $\uHom_A(-,k)$ to $\theta_i$ produces a bigraded linear map $\uHom_A(\theta_i,k):E(A)\to E(A)^{\oplus m}$. Restricted on $E^{h_A}(A)$, one obtains
$$
\uHom_A(\theta_i,k)(\omega^*)=h'_{i1}\omega^*_1+h'_{i2}\omega^*_2+\cdots+h'_{im}\omega^*_m,
$$
where $h'_{i1},h'_{i2},\cdots,h'_{im}\in k$ and $\{\omega^*_i\}$ is the canonical basis of $E^{h_A}(A)^{\oplus m}$ with respect to $\omega^*$. Now, we have another matrix $H'=(h'_{ij})\in\mathbb{M}_m(k)$.

\begin{lemma}\label{equivalent definition of homologcial determinant}
The homological determinant $\hdet\sigma=H'$.
\end{lemma}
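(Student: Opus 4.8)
The plan is to show that the two matrices $H = \hdet\sigma$ and $H' = (h'_{ij})$ coincide by comparing the two constructions through a single commutative diagram of cochain complexes. Recall that $H$ arises from an isomorphism $\theta:{}^\varphi(P^\centerdot\otimes U)\to P^\centerdot\otimes U$ in $\Ch(\Gr A)$ with $\varepsilon_A^{\oplus m}\circ\theta = \varepsilon_A^{\oplus m}$, by restricting $\uHom_A(\theta,k)$ to $E^{h_A}(A)^{\oplus m}$; whereas $H'$ arises from the morphisms $\theta_i:{}^\varphi(P^\centerdot\otimes U)\to P^\centerdot$ with $\varepsilon_A\circ\theta_i = \mathrm{pr}_i\circ\varepsilon_A^{\oplus m}$, by restricting $\uHom_A(\theta_i,k)$ to $E^{h_A}(A)$. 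The key observation is that $\theta$ and the family $\{\theta_i\}$ are, up to the unavoidable ambiguity of the Comparison Theorem, the \emph{same} datum: the composite $\mathrm{pr}_i\circ\theta:{}^\varphi(P^\centerdot\otimes U)\to P^\centerdot$ is a lift of $\mathrm{pr}_i\circ\varepsilon_A^{\oplus m}$ along $\varepsilon_A$, hence is chain homotopic to $\theta_i$. Since $\uHom_A(P^\centerdot,k)$ has zero differential, chain-homotopic maps induce equal maps on $E(A)$, so $\uHom_A(\mathrm{pr}_i\circ\theta,k) = \uHom_A(\theta_i,k)$ as bigraded maps $E(A)\to E(A)$.

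Concretely, I would first record that $\theta = (\theta_1,\dots,\theta_m)^T$ up to homotopy, i.e. $\mathrm{pr}_i\circ\theta\simeq\theta_i$ in $\Ch(\Gr A)$, using that $P^\centerdot$ is a complex of projectives and $\varepsilon_A$ a quasi-isomorphism. Next I would unwind the definition of $H$: writing $\omega_i^* = (0,\dots,\omega^*,\dots,0)\in (E^{h_A}(A))^{\oplus m}$, the identity $\uHom_A(\theta,k)(\omega_j^*) = \sum_i H_{ij}\,\omega_i^*$ (reading off the defining equation $\uHom_A(\theta,k)(\omega_1^*,\dots,\omega_m^*)^T = H(\omega_1^*,\dots,\omega_m^*)^T$) translates, after composing with the $i$-th projection, into $\uHom_A(\mathrm{pr}_i\circ\theta,k)(\omega^*) = \sum_j H_{ij}\,\omega_j^*$ — here one uses that $\uHom_A(-,k)$ turns the projection $\mathrm{pr}_i$ on $P^\centerdot\otimes U$ into the inclusion of the $i$-th summand. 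Comparing with the defining equation $\uHom_A(\theta_i,k)(\omega^*) = \sum_j h'_{ij}\,\omega_j^*$ and using $\uHom_A(\mathrm{pr}_i\circ\theta,k) = \uHom_A(\theta_i,k)$ gives $H_{ij} = h'_{ij}$ for all $i,j$, i.e. $H = H'$.

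The main point requiring care — and the only genuine obstacle — is bookkeeping the dualization: one must check that under $\uHom_A(-,k)$ the projection $\mathrm{pr}_i:P^\centerdot\otimes U\to P^\centerdot$ and the canonical identification $\uHom_A(P^\centerdot\otimes U,k)\cong \uHom_A(P^\centerdot,k)^{\oplus m} = E(A)^{\oplus m}$ are compatible with the chosen bases, so that "restrict $\uHom_A(\theta,k)$ to $(E^{h_A})^{\oplus m}$ then take $i$-th component" really equals "restrict $\uHom_A(\mathrm{pr}_i\circ\theta,k)$ to $E^{h_A}$". Since $E^{h_A}(A)$ is one-dimensional with basis $\omega^*$ and $U = k^{\oplus m}$ carries the canonical basis, this is a direct matrix computation; once the indexing conventions for $\theta = (\theta_i)^T$ versus the matrix $H$ are aligned (transpose conventions are the likely pitfall), the identification is immediate. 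I do not expect any homological subtlety beyond the standard uniqueness-up-to-homotopy of lifts, which is exactly what makes $\hdet\sigma$ well defined in the first place.
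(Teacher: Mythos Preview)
Your proposal is correct and follows essentially the same approach as the paper: both arguments observe that $\widetilde{\mathrm{pr}}_i\circ\theta$ and $\theta_i$ are lifts of the same map $\mathrm{pr}_i\circ\varepsilon_A^{\oplus m}$ along the quasi-isomorphism $\varepsilon_A$, hence homotopic, and then use that $\uHom_A(-,k)$ annihilates the homotopy together with the fact that $\uHom_A(\widetilde{\mathrm{pr}}_i,k)$ is the $i$-th inclusion $E(A)\hookrightarrow E(A)^{\oplus m}$ to match the two matrices entrywise. Your own warning about transpose conventions is apt---the intermediate line $\uHom_A(\theta,k)(\omega_j^*)=\sum_i H_{ij}\omega_i^*$ should read $\sum_i H_{ji}\omega_i^*$ from the defining equation, but your final comparison $\uHom_A(\mathrm{pr}_i\circ\theta,k)(\omega^*)=\sum_j H_{ij}\omega_j^*$ is correct and the conclusion $H=H'$ stands.
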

\begin{proof} Let $i\in\{1,\cdots,m\}$. The natural projection $\mathrm{pr}_i:k^{\oplus m}\to k$ induces a natural projection $\widetilde{\mathrm{pr}}_i: P^\centerdot\otimes U\to P^\centerdot$ satisfying $\varepsilon_A\circ \widetilde{\mathrm{pr}}_i=\mathrm{pr}_i\circ \varepsilon^{\oplus m}_A$. Now we consider the following commutative diagram
$$
\xymatrix{
{^\varphi(P^\centerdot\otimes U)}\ar[rrr]^{\theta_i}\ar[rd]^{\varepsilon_A^{\oplus m}}\ar[dd]^{\theta}&  &  &  P^\centerdot\ar[ld]_{\varepsilon_A}\ar@{=}[dd]\\
  &  {_Ak}^{\oplus m}\ar[r]^{\mathrm{pr}_i}  &{_Ak} & \\
P^\centerdot\otimes U \ar[rrr]^{\widetilde{\mathrm{pr}_i}}\ar[ur]^{\varepsilon^{\oplus m}_A}&&&P^\centerdot.\ar[ul]_{\varepsilon_A}
}
$$
So $\varepsilon_A\circ \widetilde{\mathrm{pr}_i}\circ \theta=\varepsilon_A\circ\theta_i$. Because $\varepsilon_A:P^\centerdot\to {_Ak}$ is a quasi-isomorphism and $\uHom_A({^\varphi(P^\centerdot\otimes U)},-)$ preserves quasi-isomorphism, $\widetilde{\mathrm{pr}_i}\circ\theta-\theta_i$ is null homotopic. That is to say
$$
\uHom_A(\theta_i,k)=\uHom_A(\widetilde{\mathrm{pr}}_i\circ\theta,k):E(A)\to E(A)^{\oplus m}.
$$
However, it is obvious $\uHom_A(\widetilde{\mathrm{pr}}_i,k):E(A)\to E(A)^{\oplus m}$ is a natural injection of $i$-th position. Hence,
$$
\uHom_A(\theta_i,k)(\omega^*)=\uHom_A(\theta,k)\circ \uHom_A(\widetilde{\mathrm{pr}}_i,k)(\omega^*)=\uHom_A(\theta,k)(\omega_i^*).
$$
The proof is completed.
\end{proof}

Recall that the minimal free resolution (\ref{resolution of Bk}) of ${_Bk}$ is
\begin{equation}
Q^\centerdot = (\cdots \xlongrightarrow{d_Q^{-4}} B\otimes W_3\xlongrightarrow{d_Q^{-3}} B\otimes W_2 \xlongrightarrow{d_Q^{-2}} B\otimes W_1 \xlongrightarrow{d_Q^{-1}} B\to 0\to\cdots) \quad \xlongrightarrow{\varepsilon_B}\quad {_Bk},\tag{RB}
\end{equation}
where $W_1=k(-l_1)^{\oplus m}$ with the canonical coordinate basis $\{e_1,e_2,\cdots,e_m\}$. The dual basis $\{e^*_1,e^*_2,\cdots,e^*_m\}$ of $W_1^*$ can be seen as the basis elements of $E^1(B)$.

It is easy to know that graded $A$-module homomorphism $A\otimes e^*_i: A\otimes W_1\to A(-l_1)$ is also a graded right $A$-module homomorphism. Then we have a graded $A^e$-module homomorphism
$$
\Psi:=(A\otimes e^*_1,\cdots,A\otimes e^*_m):(A\otimes W_1)^\sigma\to (A(-l_1)^{\oplus m})^\sigma\cong (A\otimes U)^\sigma(-l_1).
$$
\begin{lemma}\label{isomorphism for twisted modules}
For any graded $A$-module $M$, there exists a natural isomorphism as graded $A$-modules
$$
(A\otimes U)^\sigma\otimes_A M\cong {^\varphi (M\otimes U)}.
$$
\end{lemma}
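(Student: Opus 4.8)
The plan is to unwind both sides of the claimed isomorphism as ordinary tensor products (with twisted module structures) and compare the $A$-actions directly on a spanning set. First I would fix the canonical coordinate basis $\{e_i\}_{i=1}^m$ of $U=k^{\oplus m}$ and write $\varphi=\sigma^{-1}$, which exists because $\sigma$ is invertible in the sense of Definition~\ref{invertible}. On the left-hand side, an element of $(A\otimes U)^\sigma\otimes_A M$ is a sum of elementary tensors of the form $(a\otimes e_i)\otimes_A x$ with $a\in A$, $x\in M$; using the relation $(a\otimes e_i)\otimes_A x = (1\otimes e_i)*a \,\otimes_A x$ — where $*$ denotes the $\sigma$-twisted right action induced via $(A\otimes W_1)^\sigma \cong (A\otimes U)^\sigma$, so that $(1\otimes e_i)*a = \sum_j \sigma_{ji}(a)\otimes e_j$ — one sees that every element is a sum of tensors $(1\otimes e_i)\otimes_A x$. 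Hence as a graded $k$-vector space $(A\otimes U)^\sigma\otimes_A M \cong \bigoplus_{i=1}^m M\otimes e_i \cong M\otimes U$, via $(1\otimes e_i)\otimes_A x \mapsto x\otimes e_i$; I would record this as the underlying linear isomorphism and then check it intertwines the left $A$-module structures.

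The core computation is to track the left $A$-action. On $(A\otimes U)^\sigma\otimes_A M$ the left action is simply $b\cdot\big((1\otimes e_i)\otimes_A x\big) = (b\otimes e_i)\otimes_A x$ for $b\in A$. Now I rewrite $b\otimes e_i$ in terms of the basis vectors $1\otimes e_j$ acted on the right: since $1\otimes e_j$ spans the free right $A$-module and $(1\otimes e_j)*a=\sum_t\sigma_{tj}(a)\otimes e_t$, the invertibility equations $\sum_k \sigma_{tj}\circ\varphi_{tk}=\delta_{jk}\id_A$ (in the form stated in Definition~\ref{invertible}) give $b\otimes e_i = \sum_j (1\otimes e_j)*\varphi_{ji}(b)$. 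Pushing the right action across $\otimes_A$ onto $M$, this becomes $\sum_j (1\otimes e_j)\otimes_A \big(\varphi_{ji}(b)\,x\big)$, which under the linear isomorphism maps to $\sum_j \varphi_{ji}(b)x \otimes e_j$. Comparing with Definition~\ref{phi-twisted module structure}, the element $\sum_i x_i\otimes e_i$ of ${}^\varphi(M\otimes U)$ satisfies $b*\big(\sum_i x_i\otimes e_i\big) = \sum_j\big(\sum_i \varphi_{ji}(b)x_i\big)\otimes e_j$, which is exactly the formula just obtained. So the linear isomorphism is $A$-linear.

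It remains to verify well-definedness and naturality, which are routine. For well-definedness of the map $(A\otimes U)^\sigma\otimes_A M\to {}^\varphi(M\otimes U)$ one checks it is balanced over $A$, i.e.\ compatible with the identification $(1\otimes e_i)*a\otimes_A x = (1\otimes e_i)\otimes_A ax$; this follows from the derivation of $b\otimes e_i=\sum_j(1\otimes e_j)*\varphi_{ji}(b)$ together with the second family of invertibility relations $\sum_k\varphi_{jk}\circ\sigma_{ik}=\delta_{ij}\id_A$. For naturality in $M$, given $g\colon M\to N$ in $\Gr A$ the square involving $g\otimes_A(A\otimes U)^\sigma$ on one side and ${}^\varphi(g\otimes U)$ on the other commutes on the spanning tensors $(1\otimes e_i)\otimes_A x$ by inspection, since the isomorphism was defined on exactly those elements and $g$ commutes with the $A$-action. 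Grading is preserved because all the identifications used ($U=k^{\oplus m}$ in degree $0$, the shift bookkeeping, and the maps $\sigma_{ij},\varphi_{ij}$ being degree-preserving) respect degrees. The one point demanding care — and the only place I expect to spend real effort — is the balancing/well-definedness step: one must be careful that the two possible ways of moving scalars past $\otimes_A$ (through the $\sigma$-twisted right action on the left factor versus directly onto $M$) agree, and this is precisely where both sets of invertibility identities for $\sigma$ and $\varphi=\sigma^{-1}$ get used.
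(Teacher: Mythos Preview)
Your approach is correct and is essentially the same as the paper's: the paper defines the map $\zeta\big((\sum_i a_i\otimes u_i)\otimes_A x\big)=\sum_{i,j}\varphi_{ji}(a_i)x\otimes u_j$ (factoring through $A\otimes_A{}^\varphi(M\otimes U)$) and simply asserts it is a natural graded $A$-module isomorphism, whereas you carry out the checks in detail; on the generators $(1\otimes e_i)\otimes_A x$ both maps agree since $\varphi(1)=I_m$. One small cleanup: in your displayed invertibility identity the summation index should be $t$, not $k$, and you should double-check the index order in the right $\sigma$-twisted action formula against the paper's convention for $(C\otimes W_1)^\sigma$ (it does not affect the final conclusion, but as written the indices are transposed relative to the row-vector convention implicit in the paper).
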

\begin{proof}
Let $\{u_1,u_2,\cdots,u_m\}$ be the canonical coordinate basis of $U$. Define
$$
\begin{array}{cclcll}
\zeta: &(A\otimes U)^\sigma \otimes_A M         &  \to   &A\otimes_A\,{^\varphi(M\otimes U)}&\xrightarrow{\cong} &{^\varphi(M\otimes U)}\\
         &(\sum_{i=1}^m a_i\otimes u_i)\otimes_Ax &\mapsto&\sum_{i=1}^m a_i\otimes_A (x\otimes u_i)&\mapsto  & \sum_{i=1}^m ( \sum_{j=1}^m(\varphi_{ji}(a_i)x\otimes u_j).
\end{array}
$$
It is easy to check it is a natural graded $A$-module isomorphism.
\end{proof}

Using the isomorphism in the lemma above, one obtains a morphism of cochain complexes in $\Ch(\Gr A)$:
$$
\xi_i:(A\otimes W_1)^\sigma\otimes_AP^\centerdot\xlongrightarrow{\Psi\otimes_AP^\centerdot} (A\otimes U)^\sigma(-l_1)\otimes_AP^\centerdot\xlongrightarrow{\cong}{^\varphi (P^\centerdot\otimes U)}(-l_1)\xlongrightarrow{\theta_i(-l_1)}
P^\centerdot(-l_1),
$$
where $i=1,\cdots,m$. By Lemma \ref{equivalent definition of homologcial determinant} and Lemma \ref{isomorphism for twisted modules}, we have an immediate result.
\begin{lemma}\label{representation of xi}
$\left((\xi_1)^{-h_A},\cdots,(\xi_m)^{-h_A}\right)^T=\hdet\sigma \cdot\left((A\otimes e^*_1)\otimes_AA(-l_A),\cdots,(A\otimes e^*_m)\otimes_AA(-l_A)\right)^T$.
\end{lemma}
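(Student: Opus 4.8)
The plan is to evaluate both sides of the claimed identity on a generating set of the free left $A$-module occurring in cohomological degree $-h_A$, since a morphism of free modules is determined by its values on generators. Because $P^\centerdot$ terminates at the $(-h_A)$-th term with $P^{-h_A}=A\otimes V_{h_A}$ free of rank one on a generator $1\otimes\omega$ of degree $l_A$, the source complex $(A\otimes W_1)^\sigma\otimes_AP^\centerdot$ has $(-h_A)$-th term $(A\otimes W_1)^\sigma\otimes_A(A\otimes V_{h_A})$, which is free on the elements $(1\otimes e_j)\otimes_A(1\otimes\omega)$, $j=1,\dots,m$, where $\{e_j\}_{j=1}^m$ is the canonical basis of $W_1$. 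So I would fix such a generator and trace it through the three morphisms composing $\xi_i$.

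First, by the definition of $\Psi$ one has $\Psi(1\otimes e_j)=1\otimes u_j$ (up to the shift by $-l_1$), where $\{u_j\}_{j=1}^m$ is the canonical basis of $U$; hence $(\Psi\otimes_AP^\centerdot)^{-h_A}$ sends $(1\otimes e_j)\otimes_A(1\otimes\omega)$ to $(1\otimes u_j)\otimes_A(1\otimes\omega)$. Next, since $\varphi=\sigma^{-1}$ is a graded algebra homomorphism with $\varphi(1_A)=I_m$, the isomorphism $\zeta$ of Lemma \ref{isomorphism for twisted modules}, applied with $M=A\otimes V_{h_A}$, carries $(1\otimes u_j)\otimes_A(1\otimes\omega)$ to $(1\otimes\omega)\otimes u_j$ inside $^\varphi\big((A\otimes V_{h_A})\otimes U\big)(-l_1)$. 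So everything reduces to computing $\theta_i^{-h_A}$ on $(1\otimes\omega)\otimes u_j$.

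For that I would invoke Lemma \ref{equivalent definition of homologcial determinant}. Write $\hdet\sigma=(h'_{ij})$. Under the identification $\uHom_A\big(^\varphi((A\otimes V_{h_A})\otimes U),k\big)\cong E^{h_A}(A)^{\oplus m}$ the basis functional $\omega^*_l$ is the map $(1\otimes\omega)\otimes u_p\mapsto\delta_{lp}$; here one uses $\varepsilon_A\circ\varphi_{pq}=\delta_{pq}\varepsilon_A$, which is precisely what makes this identification the naive projection-onto-summands isomorphism. Lemma \ref{equivalent definition of homologcial determinant} gives $\uHom_A(\theta_i,k)(\omega^*)=\sum_j h'_{ij}\omega^*_j$, so evaluating at $(1\otimes\omega)\otimes u_j$ yields $\omega^*\big(\theta_i^{-h_A}((1\otimes\omega)\otimes u_j)\big)=h'_{ij}$. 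Since $\theta_i^{-h_A}((1\otimes\omega)\otimes u_j)$ has degree $l_A$ in $A\otimes V_{h_A}$, it is a scalar multiple of $1\otimes\omega$, hence equals $h'_{ij}(1\otimes\omega)$. Composing the three steps, $(\xi_i)^{-h_A}\big((1\otimes e_j)\otimes_A(1\otimes\omega)\big)=h'_{ij}(1\otimes\omega)$, whereas $(A\otimes e^*_j)\otimes_AA(-l_A)$ sends $(1\otimes e_l)\otimes_A(1\otimes\omega)$ to $\delta_{jl}(1\otimes\omega)$; so $(\xi_i)^{-h_A}$ and $\sum_j h'_{ij}\big((A\otimes e^*_j)\otimes_AA(-l_A)\big)$ agree on every generator, which is exactly the asserted matrix identity.

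I expect the main obstacle to be purely bookkeeping: keeping the $\sigma$-twist on the source, the $\varphi$-twist on the middle term, and the grading shifts $-l_1$ and $-l_A$ mutually consistent, and verifying that the descriptions of $\omega^*_l$ and of the isomorphism $\uHom_A(^\varphi(\cdots),k)\cong E^{h_A}(A)^{\oplus m}$ coincide with the ones implicitly used to define $\hdet\sigma$ in Lemma \ref{equivalent definition of homologcial determinant}. Once those identifications (all consequences of $\varphi(1_A)=I_m$ and $\varepsilon_A\circ\varphi_{pq}=\delta_{pq}\varepsilon_A$) are pinned down, the computation is a sign-free chase on generators, everything living in the single top cohomological degree so the Koszul sign rule never intervenes.
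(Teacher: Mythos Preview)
Your proposal is correct and follows exactly the route the paper has in mind: the paper's own proof is the single sentence ``By Lemma \ref{equivalent definition of homologcial determinant} and Lemma \ref{isomorphism for twisted modules}, we have an immediate result,'' and your write-up is precisely the unpacking of that sentence---tracing a free generator $(1\otimes e_j)\otimes_A(1\otimes\omega)$ through $\Psi$, then through the isomorphism $\zeta$ of Lemma \ref{isomorphism for twisted modules} (using $\varphi(1_A)=I_m$), and finally through $\theta_i^{-h_A}$ via Lemma \ref{equivalent definition of homologcial determinant}. Your caution about the identifications behind $\omega_j^*$ and the twist/shift bookkeeping is well placed, but, as you note, these all collapse to the evident projection-onto-summands once one uses that $\varphi$ is unital, so there is no hidden obstacle.
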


Now it is the turn to show a result about the Yoneda product.
\begin{lemma}\label{E^hA(A) * E1(B)}
Let $f\in V^*_{h_A}$, then
$$
(1\otimes f)\cdot (e^*_1\otimes 1,\cdots, e^*_m\otimes 1)^T=\hdet \sigma\cdot (e^*_1\otimes f,\cdots, e^*_m\otimes f)^T.
$$
\end{lemma}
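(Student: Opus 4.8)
**Proof proposal for Lemma \ref{E^hA(A) * E1(B)}.**

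The plan is to mimic the computation in Lemma \ref{E^1(A) *E^hB(B)}, only now with the roles of $A$ and $B$ reversed: the ``top'' class comes from $A$ (at homological degree $h_A$) and the ``low'' class comes from $B$ (at homological degree $1$), so this time the homological determinant $\hdet\sigma$ of $\sigma$ should intervene rather than the determinant $\det\sigma$. First I would fix a homogeneous representative $f\in (V_{h_A}^*)_{-l_A}$, say $f=\omega^*$, and recall from Lemma \ref{Images of E(pi_A) and E(pi_B)} and Remark \ref{a note for E(A), E(B) and E(C)} the explicit cochain maps representing $E(\pi_A)(f)=1\otimes f$ and each $E(\pi_B)(e_i^*)=e_i^*\otimes 1$ on the resolution $F^\centerdot_\phi\otimes_A P^\centerdot$ of $_Ck$. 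The representative of $1\otimes f$ is induced by $\pi_A\otimes_A(\varepsilon_A\otimes f):C\otimes_A(A\otimes V_{h_A})\to {_Ck}(-l_A)$ concentrated in the summand $F^0_\phi\otimes_A P^{-h_A}$, and the representative of $e_i^*\otimes 1$ is induced by $\varepsilon_B\otimes e_i^*$ after projecting $(C\otimes W_1)^\sigma\otimes_A A\cong C\otimes W_1$ onto $B\otimes W_1$, concentrated in $F^{-1}_\phi\otimes_A P^0$.

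Next I would assemble the relevant comparison diagram in $\Ch(\Gr C)$ computing the Yoneda product $(1\otimes f)\cdot(e_i^*\otimes 1)$. The key point is that lifting the low-degree class $e_i^*\otimes 1$ through the resolution reduces, after the identifications of Lemma \ref{Atwtensor isomorphic to AtwtensorB tensor} and Lemma \ref{isomorphism for twisted modules}, exactly to the cochain map $\xi_i:(A\otimes W_1)^\sigma\otimes_A P^\centerdot\to P^\centerdot(-l_1)$ constructed just above the statement, tensored up over $A$ along $F^\centerdot_\phi$; here one uses that the $(-1)$-th term of $F^\centerdot_\phi$ is $(C\otimes W_1)^\sigma$ and that $C\otimes^\tau_A(A\otimes U)^\sigma\cong {}^\varphi(-\otimes U)$. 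Composing with the representative of $1\otimes f=1\otimes\omega^*$ in homological degree $h_A$ then picks out precisely the degree-$(-h_A)$ component $(\xi_i)^{-h_A}$ of $\xi_i$, which by Lemma \ref{representation of xi} equals the $i$-th entry of $\hdet\sigma\cdot\bigl((A\otimes e_1^*)\otimes_A A(-l_A),\dots,(A\otimes e_m^*)\otimes_A A(-l_A)\bigr)^T$. Reading off the resulting cocycle through Remark \ref{a note for E(A), E(B) and E(C)} and identifying $E^{h_A+1}(C)\supseteq W_1^*\otimes V_{h_A}^*$ gives $(1\otimes f)\cdot(e_i^*\otimes 1)=\sum_{j=1}^m (\hdet\sigma)_{ij}\, e_j^*\otimes f$, i.e. the asserted matrix identity.

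The main obstacle will be bookkeeping: making the several identifications (Lemma \ref{Atwtensor isomorphic to AtwtensorB tensor}, Lemma \ref{isomorphism for homomorphism with twsited structures}'s predecessor Lemma \ref{isomorphism for twisted modules}, the shift conventions and Koszul signs) compatible enough that the lift of $e_i^*\otimes 1$ really becomes $\xi_i\otimes_A(\text{id})$ on the nose, and checking that the auxiliary morphism $\theta_i$ appearing in $\xi_i$ agrees (up to homotopy, hence on $\uHom(-,k)$) with the one implicitly used when lifting through $F^\centerdot_\phi$. I expect the sign to work out cleanly because the low class sits in homological degree $1$ and is multiplied on the right, so the Koszul factor $(-1)^{1\cdot h_A}$ is already absorbed into the chosen orientation of $\xi_i$; but I would double-check this against the sign in Lemma \ref{The Yoneda product of E(pi_B) and E(pi_A)}. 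Once the diagram commutes, the conclusion is immediate from Lemma \ref{representation of xi}.
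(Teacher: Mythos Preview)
Your proposal is correct and follows essentially the same route as the paper: represent $1\otimes f$ and $e_i^*\otimes 1$ via Remark \ref{a note for E(A), E(B) and E(C)}, reduce the Yoneda product to a composition involving $\xi_i$, and invoke Lemma \ref{representation of xi}. The paper carries out the ``bookkeeping'' step you flag as the main obstacle by introducing a morphism $\eta:F^\centerdot_\phi\to (A\otimes W_1)^\sigma[1]$ induced by $\pi_A\otimes W_1$, picking an arbitrary lift $\gamma$ of $E(\pi_B)(\beta_i)$, and then showing $\widetilde{\pi}_A(-l_1)[1]\circ\gamma$ is homotopic to $\xi_i[1]\circ(\eta\otimes_A P^\centerdot)$ using that $\varepsilon_A$ is a quasi-isomorphism; this is precisely the up-to-homotopy identification you anticipated, and no extra sign appears.
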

\begin{proof}
Write $\alpha:P^\centerdot\to {_Ak(-l_A)}[h_A]$ to be the morphism of cochain complexes corresponding to $f$, and $\beta_i:Q^\centerdot\to {_Bk}(-l_1)[1]$ to be the one corresponding to $e^*_i$ for $i=1,\cdots,m$. By Lemma \ref{Images of E(pi_A) and E(pi_B)} and Remark \ref{a note for E(A), E(B) and E(C)}, $E(\pi_A)(\alpha):F^\centerdot_\phi\otimes_AP^\centerdot\to {_Ck}(-l_A)[h_A]$ in $\Ch(\Gr C)$ corresponds to $1\otimes f\in W_0^*\otimes V^*_{h_A}$ induced by
$$
\pi_A\otimes_A(\varepsilon_A\otimes f):C\otimes_A(A\otimes V_{h_A}) \to {_Ck(-l_A)},
$$
and each $E(\pi_B)(\beta_i):F^\centerdot_\phi\otimes_AP^\centerdot\to {_Ck}(-l_1)[1]$ in $\Ch(\Gr C)$ corresponds to $e^*_i\otimes 1\in W^*_{1}\otimes V_0^*$ induced by
$$
(C\otimes W_1)^{\sigma}\otimes_AA\cong C\otimes W_1\xlongrightarrow{\pi_B\otimes W_{1}}B\otimes W_{1}\xlongrightarrow{\varepsilon_B\otimes e^*_i} {_Ck}(-l_1).
$$

%Take a brutal truncation of $F^\centerdot_\phi$ as follows:
%$$
%tF^\centerdot_\phi: \cdots \xlongrightarrow{d_F^{-4}} (C\otimes W_3)^{\phi_3} \xlongrightarrow{d_F^{-3}} (C\otimes W_2)^{\phi_2} \xlongrightarrow{d_F^{-2}} (C\otimes W_1)^\sigma\to 0\to\cdots,
%$$
%which is also a quotient complex of $F^\centerdot_\phi$, denoted by $\pi:F^\centerdot_\phi\to tF^\centerdot_\phi$. The graded $C$-module homomorphism $\eta_i$ induces a morphism of cochain complexes $\rho_i:tF^\centerdot_\phi\otimes_A P^\centerdot\to {_Ck}(-l_1)[1]$ satisfying $\rho_i\circ (\pi\otimes_A P^\centerdot)=E(\pi_B)(\beta_i)$.
%Note that $(C\otimes W_1)^\sigma[1]$ is a subcomplex of $tF^\centerdot_\phi$, denoted by $\iota:(C\otimes W_1)^\sigma[1]\to tF^\centerdot_\phi$,  and $C$ is a subcomplex of $F_\phi^\centerdot$,  denoted by $\iota_0:C\to F^\centerdot_\phi$.

The graded $(C,A)$-bimodule homomorphism $\pi_A\otimes W_1:(C\otimes W_1)^\sigma\to (A\otimes W_1)^\sigma$ induces a morphism of cochain complexes in $\Ch(\Gr\,(C\otimes A^o))$
$$
\eta: F^\centerdot_\phi\to (A\otimes W_1)^\sigma[1],
$$
since the differential $d_F=A\otimes^{\tau}d_Q$ and $Q^\centerdot$ is minimal.

Let $i$ be an arbitrary integer. Then we focus on the following diagram
$$
\xymatrix{
&    F^\centerdot_\phi\otimes_AP^\centerdot\ar[rr]^{\widetilde{\pi}_B}\ar[rrd]^{E(\pi_B)(\beta_i)}\ar[d]^{\gamma}\ar[ld]^{\eta\otimes_AP^\centerdot}      &&  Q^\centerdot\ar[d]^{\beta_i}        \\
\left((A\otimes W_1)^\sigma\otimes_AP^\centerdot\right)[1]\ar[rd]^{\xi_i[1]} & (F^\centerdot_\phi\otimes_AP^\centerdot)(-l_1)[1]\ar[rr]^{\varepsilon_C(-l_1)[1]}\ar[d]^{\widetilde{\pi}_A(-l_1)[1]}   &&   {k}(-l_1)[1]\\
& P^\centerdot(-l_1)[1]\ar[rru]_{\varepsilon_A(-l_1)[1]}\ar[d]^{\alpha(-l_1)[1]}&&\\
&  k(-l_1-l_A)[1+h_A],&
}
$$
where $\gamma$ is chosen to satisfy $\varepsilon_C(-l_1)[1]\circ \gamma=E(\pi_B)(\beta_i)=\beta_i\circ \widetilde{\pi}_B$. It is clear the right half of the diagram above is commutative, so $\varepsilon_A(-l_1)[1]\circ \widetilde{\pi_A}(-l_1)[1]\circ\gamma=\beta_i\circ\widetilde{\pi}_B.$
By a straightforward computation, one obtains $\varepsilon_A(-l_1)[1]\circ\xi_i[1]\circ (\eta\otimes_AP^\centerdot)=\beta_i\circ\widetilde{\pi}_B$, that is,
$$
\varepsilon_A(-l_1)[1]\circ \widetilde{\pi_A}(-l_1)[1]\circ\gamma=\varepsilon_A(-l_1)[1]\circ\xi_i[1]\circ (\eta\otimes_AP^\centerdot).
$$
Because $\varepsilon_A$ is a quasi-isomorphism, $\widetilde{\pi_A}(-l_1)[1]\circ \gamma$ is homotopic to $\xi_i[1]\circ (\eta\otimes_AP^\centerdot)$. Combined with the minimality of resolutions $F_\phi^\centerdot$ and $P^\centerdot,$ we have
$$
\alpha(-l_1)[1]\circ\widetilde{\pi_A}(-l_1)[1]\circ \gamma=\alpha(-l_1)[1]\circ\xi_i[1]\circ (\eta\otimes_AP^\centerdot).
$$
%We claim that there exists a morphism of cochain complexes $\widetilde{\rho}_i:tF^\centerdot_\phi\otimes P^\centerdot \to (F^\centerdot_\phi\otimes_A P^\centerdot)(-l_1)[1]$ such that the following diagram is commutative
%$$
%\xymatrix{
%&& F^\centerdot_\phi\otimes_AP^\centerdot\ar[d]_{\pi\otimes_AP^\centerdot}\ar[rrdd]^{E(\pi_B)(\beta_i)}       &              \\
%\left((C\otimes W_1)^\sigma\otimes_AP^\centerdot\right) [1]\ar[rr]^(0.6){\iota\otimes_AP^\centerdot}\ar[d]^{\xi_i[1]}                  & & tF^\centerdot_\phi\otimes_AP^\centerdot\ar[rrd]^{\rho_i}\ar[d]^{\widetilde{\rho_i}}      &               \\
%(C\otimes_AP^\centerdot)(-l_1)[1]\ar[rr]^{(\iota_0\otimes_AP^\centerdot)(-l_1)[1]}\ar[d]^{(\pi_A\otimes P^\centerdot)(-l_1)[1]}                                      &&(F^\centerdot_\phi\otimes_AP^\centerdot)(-l_1)[1]\ar[rr]^(0.5){\varepsilon_C(-l_1)[1]}\ar[d]^{E(\pi_A)(\alpha)(-l_1)[1]}\ar[lld]^(0.4){(\pi_A\otimes_ AP^\centerdot)(-l_1)[1]}            &  &{_Ck}(-l_1)[1]            \\
%P^\centerdot (-l_1)[1]\ar[rr]^{\alpha}                                                    &&{_Ck}(-l_1-l_A)[1+h_A],
%}
%$$
%and the nonzero values of $(\pi_A\otimes_A P^\centerdot)(-l_1)[1]\circ \widetilde{\rho}_i$ only take from the subcomplex $((C\otimes W_1)^\sigma\otimes_AP^\centerdot)[1]$.

In the other hand, $E(\pi_A)(\alpha)\cdot E(\pi_B)(\beta_i)=\alpha(-l_1)[1]\circ\widetilde{\pi}_A(-l_1)[1]\circ \gamma=\alpha(-l_1)[1]\circ\xi_i[1]\circ (\eta\otimes_AP^\centerdot)$ and the nonzero component equals the composition of following morphisms
$$
(C\otimes W_1)^\sigma\otimes_A (A\otimes V_{h_A})\xlongrightarrow{(\pi_A\otimes W_1)\otimes_A(A\otimes V_{h_A})}(A\otimes W_1)^\sigma\otimes_A(A\otimes V_{h_A})\xlongrightarrow{(\xi_i)^{-h_A}}(A\otimes V_{h_A})(-l_1)\xlongrightarrow{(\varepsilon_A\otimes f)(-l_1)}{_Ck}(-l_1-l_A).
$$
Hence the result follows by Lemma \ref{representation of xi} and Remark \ref{a note for E(A), E(B) and E(C)}.
\end{proof}

\section{Nakayama Automorphisms}\label{Section Nakayama automorphisms}
With the preparations of results for Ext-algebras, we turn to describe Nakayama automorphisms of twisted tensor products. We still keep to the Hypothesis \ref{hypothesis on twisted tensor products} firstly. In addition, we assume connected graded algebras $A$ and $B$ are noetherian AS-regular algebras of type $(h_A,l_A)$ and $(h_B,l_B)$ respectively which are both generated in degree $1$, and the graded algebra homomorphism $\sigma$ is invertible.

By Theorem \ref{AS-regularity of twisted tensor products}, $A\otimes^{\tau}\!B$ is AS-regular and has Nakayama automorphism. We study the Nakayama automorphism of $A\otimes^{\tau}\!B$ by taking advantage of the classical Nakayama automorphism of Ext-algebra $E(A\otimes^{\tau}\!B)$. It depends on the Yoneda product of  $E(A\otimes^{\tau}\!B)\cong E(B)\otimes^{\tau_E}\!E(A)$ for the bigraded twisting map ${\tau_E}$ occurred in Theorem \ref{E(C) is a twisted tensor product of E(B) and E(A)}. The last two cases of Section \ref{Section ext-algebras} make us be able to describe the bigraded linear map ${\tau_E}$ restricted on $E^1(A)\otimes E^{h_B}(B)$ and $E^{h_A}(A)\otimes E^1(B)$. Fortunately, it suffices to  realize the goal for Nakayama automorphisms.

Under the assumption, $E^1(A)=A_1^*$ and $E^1(B)=B_1^*$. The $k$-dual of determinant $(\det\sigma)^*$ is an automorphism of $E^1(A)$, and  $E^1(B)$ has a basis $\{y_1^*,\cdots,y_m^*\}$.

\begin{lemma}\label{form of RE}
Let $f_1\in E^1(A),f_{h_A}\in E^{h_A}(A)$ and $g\in E^{h_B}(B)$, then
\begin{align*}
&{{\tau_E}}(f_1\otimes g)=(-1)^{h_B}g\otimes \left(\det\sigma_{\,|A_1}\right)^*(f_1),\\
&{{\tau_E}}\left(
\begin{array}{c}
f_{h_A}\otimes y_1^*\\
f_{h_A}\otimes y_2^*\\
\vdots\\
f_{h_A}\otimes y_m^*
\end{array}
\right)
=
(-1)^{h_A}\hdet\sigma\cdot\left(
\begin{array}{c}
y_1^*\otimes f_{h_A}\\
y_2^*\otimes f_{h_A}\\
\vdots\\
y_m^*\otimes f_{h_A}
\end{array}
\right).
\end{align*}
\end{lemma}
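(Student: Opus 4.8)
The plan is to read off $\tau_E$ from its defining relation in Theorem~\ref{E(C) is a twisted tensor product of E(B) and E(A)}. Write $\rho := m_{E(C)}\circ(E(\pi_B)\otimes E(\pi_A))$, the bigraded isomorphism $E(B)\otimes E(A)\to E(C)$ constructed there; then $\tau_E = \rho^{-1}\circ m_{E(C)}\circ(E(\pi_A)\otimes E(\pi_B))$. Thus for $f\in E(A)$ and $g\in E(B)$, computing $\tau_E(f\otimes g)$ needs two ingredients: first, the Yoneda product $E(\pi_A)(f)\cdot E(\pi_B)(g) = (1\otimes f)\cdot(g\otimes 1)$ in $E(C)$, written in the basis $\{W_j^*\otimes V_i^*\}$ of $E(C)$ coming from the minimal resolution $F_\phi^\centerdot\otimes_A P^\centerdot$; and second, an explicit description of $\rho^{-1}$ on that same basis. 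For the first ingredient I will invoke Lemma~\ref{E^1(A) *E^hB(B)} in the case $f\in E^1(A)$, $g\in E^{h_B}(B)$, and Lemma~\ref{E^hA(A) * E1(B)} in the case $f\in E^{h_A}(A)$, $g\in E^1(B)$.

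For the second ingredient, Lemma~\ref{The Yoneda product of E(pi_B) and E(pi_A)} gives, for $g\in W_j^*$ and $f\in V_i^*$, that $\rho(g\otimes f) = (g\otimes 1)\cdot(1\otimes f) = (-1)^{ij}\,g\otimes f$, where the right-hand side is the corresponding basis vector of $W_j^*\otimes V_i^*\subseteq E^{i+j}(C)$. Hence $\rho$ is diagonal with entries $\pm 1$ in these matched bases, and $\rho^{-1}$ merely reinserts the Koszul sign: $\rho^{-1}(g\otimes f) = (-1)^{ij}\,g\otimes f$ as an element of $E(B)\otimes E(A)$. For the first identity I then take $f_1\in E^1(A) = V_1^* = A_1^*$ (the last equality because $A$ is generated in degree $1$, so $V_1 = A_1$, and $\det\sigma$, being a graded automorphism of $A$, restricts to $A_1$) and $g\in E^{h_B}(B) = W_{h_B}^*$. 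Lemma~\ref{E^1(A) *E^hB(B)} yields $(1\otimes f_1)\cdot(g\otimes 1) = g\otimes\bigl(f_1\circ(\det\sigma)_{|A_1}\bigr) \in W_{h_B}^*\otimes V_1^*$, and since $f_1\circ(\det\sigma)_{|A_1} = (\det\sigma_{|A_1})^*(f_1)$ by definition of the graded dual, applying $\rho^{-1}$ with $i=1$, $j=h_B$ gives $\tau_E(f_1\otimes g) = (-1)^{h_B}\,g\otimes(\det\sigma_{|A_1})^*(f_1)$, as claimed. For the second identity I take $f_{h_A}\in E^{h_A}(A) = V_{h_A}^*$ and, for each $i$, the basis vector $y_i^*\in E^1(B)$, which under the identification $E^1(B) = W_1^*$ is the dual basis vector $e_i^*$ (because $d_Q^{-1}(1\otimes e_i)$ is the coset of $y_i$). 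Lemma~\ref{E^hA(A) * E1(B)} gives $(1\otimes f_{h_A})\cdot(e_i^*\otimes 1) = \sum_t (\hdet\sigma)_{it}\,e_t^*\otimes f_{h_A} \in W_1^*\otimes V_{h_A}^*$, and applying $\rho^{-1}$ componentwise with $i=h_A$, $j=1$ produces the overall sign $(-1)^{h_A}$ and the stated matrix equation.

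I do not expect a genuine obstacle: the mathematical content is entirely carried by Lemmas~\ref{E^1(A) *E^hB(B)} and~\ref{E^hA(A) * E1(B)}, and the remainder is substitution. The only thing requiring care is the bookkeeping — keeping the identifications $V_1 = A_1$ and $e_i^* = y_i^*$ straight, and tracking the single Koszul sign as it passes from the Yoneda product through $\rho^{-1}$; this is exactly what produces the factors $(-1)^{h_B}$ and $(-1)^{h_A}$, the homological degrees $h_B$ and $h_A$ entering because $g$ lives in cohomological degree $h_B$ and $f_{h_A}$ in cohomological degree $h_A$.
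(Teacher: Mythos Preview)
Your proposal is correct and follows essentially the same route as the paper: both compute $\tau_E$ from its defining formula $\tau_E=\rho^{-1}\circ m_{E(C)}\circ(E(\pi_A)\otimes E(\pi_B))$, invoke Lemmas~\ref{E^1(A) *E^hB(B)} and~\ref{E^hA(A) * E1(B)} for the Yoneda products $(1\otimes f)\cdot(g\otimes 1)$, and use Lemma~\ref{The Yoneda product of E(pi_B) and E(pi_A)} to invert $\rho$ and extract the Koszul signs $(-1)^{h_B}$ and $(-1)^{h_A}$. The only cosmetic difference is that you describe $\rho^{-1}$ once in general as a diagonal sign map before applying it, whereas the paper evaluates $\rho$ directly on the specific elements $g\otimes(\det\sigma_{|A_1})^*(f_1)$ and $y_i^*\otimes f_{h_A}$; the identifications $V_1=A_1$ and $e_i^*=y_i^*$ that you spell out are implicit in the paper.
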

\begin{proof}
Write $m_{E}$ to be the Yoneda product of the Ext-algebra $E(A\otimes^{\tau}\!B)$. By Lemma \ref{E^1(A) *E^hB(B)}, we have
$$
m_{E}\circ (E(\pi_A)\otimes E(\pi_B))(f_1\otimes g)=(1\otimes f_1)\cdot (g\otimes 1)=g\otimes \left(\det\sigma_{\,|A_1}\right)^*(f_1)\in E(A\otimes^{\tau}\!B).
$$
In the other hand, following from Lemma \ref{The Yoneda product of E(pi_B) and E(pi_A)}, one obtains
\begin{eqnarray*}
m_{E}\circ (E(\pi_B)\otimes E(\pi_A))(g\otimes \left(\det\sigma_{\,|A_1}\right)^*(f_1))&=&(g\otimes1)\cdot(1\otimes \left(\det\sigma_{\,|A_1}\right)^*(f_1))\\
&=&(-1)^{h_B}g\otimes \left(\det\sigma_{\,|A_1}\right)^*(f_1)\in E(A\otimes^{\tau}\!B).
\end{eqnarray*}
Hence, we have
$$
\tau_E(f_1\otimes g)=\Big{(}m_{E}\circ (E(\pi_B)\otimes E(\pi_A))\Big{)}^{-1}\circ m_{E}\circ (E(\pi_A)\otimes E(\pi_B))(f_1\otimes g)=(-1)^{h_B}g\otimes \left(\det\sigma_{\,|A_1}\right)^*(f_1),
$$
by Theorem \ref{E(C) is a twisted tensor product of E(B) and E(A)}. The second equation follows from Lemma \ref{E^hA(A) * E1(B)} similarly.
\end{proof}

\begin{theorem}\label{Nakayama automorphism of twisted tensor products}
Let $A=k\langle X\rangle/(G_A)$ and $B=k\langle Y\rangle/(G_B)$ be both noetherian AS-regular algebras generated in degree $1$, where $X=\{x_i\}_{i=1}^n$ and $Y=\{y_j\}_{j=1}^m$ are minimal generating sets of $A$ and $B$ respectively. Let $\tau=(\sigma,0):B\otimes A\to A\otimes B$ be a graded twisting map such that $A\otimes^\tau\! B$ is noetherian. Suppose $B$ has a pure resolution and $\sigma$ is invertible, then the Nakayama automorphism $\mu_{A\otimes^{\tau}B}$ satisfies
\begin{align*}
&{\mu_{A\otimes^{\tau}B}}_{|A}=({\det} \sigma)^{-1}\circ\mu_A,\\
&{\mu_{A\otimes^{\tau}B}}_{|B}
\left(
\begin{array}{c}
y_1\\
y_2\\
\vdots\\
y_m
\end{array}
\right)
=
\hdet\sigma\cdot\mu_B\left(
\begin{array}{c}
y_1\\
y_2\\
\vdots\\
y_m
\end{array}
\right).
\end{align*}
\end{theorem}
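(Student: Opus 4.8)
The plan is to reduce everything to the (Frobenius) Ext\nobreakdash-algebras, where the structure theorem of Section~\ref{Section ext-algebras} applies, and then to transport the answer back through Theorem~\ref{the properties between ext algebras and AS regular algebras}(b). Write $C=A\otimes^{\tau}B$. By Theorem~\ref{AS-regularity of twisted tensor products}, $C$ is AS-regular of type $(h_A+h_B,l_A+l_B)$; by hypothesis it is noetherian, and it is generated in degree $1$ because $A$ and $B$ are. Hence $E(C)$ is Frobenius by Theorem~\ref{the properties between ext algebras and AS regular algebras}(a), and Theorem~\ref{the properties between ext algebras and AS regular algebras}(b) gives $\mu_{E(C)}|_{E^1(C)}=(\mu_C|_{C_1})^{*}$. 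Since $C$ is generated in degree $1$, $\mu_C$ is determined by $\mu_C|_{C_1}$, so it suffices to compute $\mu_{E(C)}$ on $E^1(C)$. Under the identifications of Section~\ref{Section ext-algebras} one has $E^1(C)=W_1^{*}\otimes V_0^{*}\ \oplus\ W_0^{*}\otimes V_1^{*}=E^1(B)\oplus E^1(A)$, matching the vector space decomposition $C_1=B_1\oplus A_1$.

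The core step is to read off $\mu_{E(C)}$ on the two degree\nobreakdash-$1$ pieces from the defining Frobenius identity $\langle x,y\rangle=(-1)^{|x||y|}\langle\mu_{E(C)}(y),x\rangle$ for $E(C)=E(B)\otimes^{\tau_E}E(A)$. The key observation is a degree count: a class in $E^1(C)$ pairs nontrivially, in either slot, only against the two summands $W_{h_B}^{*}\otimes V_{h_A-1}^{*}$ and $W_{h_B-1}^{*}\otimes V_{h_A}^{*}$ of $E^{h_A+h_B-1}(C)$, and to evaluate the products occurring in these pairings inside $E(B)\otimes^{\tau_E}E(A)$ one needs $\tau_E$ only on $E^1(A)\otimes E^{h_B}(B)$ and on $E^{h_A}(A)\otimes E^1(B)$ --- which is exactly the content of Lemma~\ref{form of RE}, in terms of $\det\sigma$ and $\hdet\sigma$ respectively. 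Combining this with the Frobenius identities of $E(A)$ and $E(B)$, the relations $\mu_{E(A)}|_{E^1(A)}=(\mu_A|_{A_1})^{*}$ and $\mu_{E(B)}|_{E^1(B)}=(\mu_B|_{B_1})^{*}$, and the fact that the Frobenius functional of $E(C)$ restricts to that of $E(A)$ on $W_{h_B}^{*}\otimes E(A)$ and to that of $E(B)$ on $E(B)\otimes V_{h_A}^{*}$, one checks that $\mu_{E(C)}$ maps $E^1(A)$ into $E^1(A)$ and $E^1(B)$ into $E^1(B)$; on $E^1(A)$ it is built from $\mu_{E(A)}|_{E^1(A)}$ and the transpose of $\det\sigma|_{A_1}$, and on $E^1(B)$ it is built from $\mu_{E(B)}|_{E^1(B)}$ and the linear transformation of $E^1(B)$ attached to the matrix $\hdet\sigma$ in the basis $\{y_1^{*},\dots,y_m^{*}\}$. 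The signs supplied by the Koszul rule in the three Frobenius identities cancel, so no sign survives in the final formulas.

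To finish, transport back. Since $\mu_{E(C)}$ preserves the subspaces $E^1(A)$ and $E^1(B)$ of $E^1(C)$, its transpose $\mu_C|_{C_1}$ preserves their annihilators $B_1$ and $A_1$, so $\mu_C$ restricts to graded algebra automorphisms of the subalgebras $A\otimes^{\tau}1$ and $1\otimes^{\tau}B$, each determined by its degree\nobreakdash-$1$ part since $A$ and $B$ are generated in degree $1$. Dualising the two formulas of the previous step and resubstituting $\mu_{E(A)}|_{E^1(A)}=(\mu_A|_{A_1})^{*}$, $\mu_{E(B)}|_{E^1(B)}=(\mu_B|_{B_1})^{*}$ then yields ${\mu_C}|_A=(\det\sigma)^{-1}\circ\mu_A$ and ${\mu_C}|_B(y_1,\dots,y_m)^{T}=\hdet\sigma\cdot\mu_B(y_1,\dots,y_m)^{T}$.

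The main obstacle I anticipate is the bookkeeping in the middle two steps: tracking the Koszul signs through the three Frobenius identities, and --- more delicately --- keeping straight the transposes and annihilators in the duality between $E(C)$ and $C$, so that the matrix $\hdet\sigma$ itself (rather than its transpose or inverse) appears in the statement and the composition order in $(\det\sigma)^{-1}\circ\mu_A$ comes out correctly. A separate minor point is the degenerate cases $A=k$ or $B=k$ (equivalently $h_A=0$ or $h_B=0$), where the ``dual summand'' analysis collapses; there $C$ equals $B$ or $A$ and the statement is immediate, but it must be recorded.
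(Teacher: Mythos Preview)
Your plan is correct and is essentially the same as the paper's: reduce to the Frobenius Ext-algebra via Theorem~\ref{the properties between ext algebras and AS regular algebras}, compute $\mu_{E(C)}$ on $E^1(A)$ and $E^1(B)$ by pairing against the complementary top-degree summands and invoking Lemma~\ref{form of RE}, then dualise back. The paper carries out exactly the explicit bilinear-form computations you outline (writing out $\langle 1\otimes^{\tau_E}(\nu^{-1})^*(x_i^*),\,\omega_B\otimes^{\tau_E}f_j\rangle$ and its $B$-side analogue and unwinding via associativity), and the Koszul signs indeed cancel as you anticipate.

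One point the paper handles that you flag only as a potential obstacle: the dualisation naturally yields $\mu_C|_A=\mu_A\circ(\det\sigma)^{-1}$ rather than $(\det\sigma)^{-1}\circ\mu_A$. The paper closes this by invoking \cite[Theorem~0.6]{LMZ}, which says the Nakayama automorphism lies in the centre of the graded automorphism group; you should record this step explicitly rather than hoping the order ``comes out correctly''.
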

\begin{proof} Write $C=A\otimes^{\tau}\!B$. Denote the Ext-algebras of $A,B$ and $C$ by $E(A),E(B)$ and $E(C)$ respectively. Let $\{x_i^*\}_{i=1}^n$ and $\{y_j^*\}_{j=1}^m$ be dual base of $E^1(A)$ and $E^1(B)$ respectively. Write $\nu=\det \sigma$.

Assume $A$ and $B$ are AS-regular algebras of type $(h_A,l_A)$ and $(h_B,l_B)$ respectively.  By Theorem \ref{AS-regularity of twisted tensor products},  $C$ is AS-regular of type $(h_A+h_B,l_A+l_B)$. Following from  Theorem \ref{the properties between ext algebras and AS regular algebras}(a), $E(A)$, $E(B)$ and $E(C)$ are Frobenius algebras of type $(-h_A,l_A)$, $(-h_B,l_B)$  and $(-h_A-h_B,l_A+l_B)$ respectively. Denote the (classical) Nakayama automorphisms of $E(A)$, $E(B)$ and $E(C)$ by $\mu_{E(A)}$, $\mu_{E(B)}$ and $\mu_{E(C)}$ respectively.

By Theorem \ref{E(C) is a twisted tensor product of E(B) and E(A)}, there exists a bigraded linear map  ${\tau_E}:E(A)\otimes E(B)\to E(B)\otimes E(A)$ such that $E(C)=E(B)\otimes^{\tau_E}\!E(A)$. Since $A$ and $B$ are generated in degree $1$, $\{1\otimes^{\tau_E} x_1^*,\cdots,1\otimes^{\tau_E} x_n^*,y_1^*\otimes^{\tau_E} 1,\cdots,y_m^*\otimes^{\tau_E} 1\}$ is a basis of $E^1(C)$.

Firstly, we focus on $\mu_{E(C)}(1\otimes^{\tau_E} x_i^*)$. Let $\omega_B$ be a basis of $E^{h_B}(B)$, and $\{f_1,\cdots,f_n\}$ be a basis of $E^{h_A-1}(A)$. Since the degree of bigraded linear form for $E(C)$ is $(-h_A-h_B,l_A+l_B)$, the only possible nonzero values come from
$$
\langle1\otimes^{\tau_E}(\nu^{-1})^*(x_i^*), \omega_B\otimes^{\tau_E} f_j\rangle =\langle\left(1\otimes^{\tau_E}(\nu^{-1})^*(x_i^*)\right)(\omega_B\otimes^{\tau_E}1), 1\otimes^{\tau_E} f_j\rangle ,
$$
following from the associativity of bilinear form for any $i,j=1,\cdots,n$. By Lemma \ref{form of RE}, we have
$$
\left(1\otimes^{\tau_E}(\nu^{-1})^*(x_i^*)\right)(\omega_B\otimes^{\tau_E}1)=\omega_B\otimes^{\tau_E} (-1)^{h_B}x_i^*.
$$
Using the associativity again,
\begin{eqnarray*}
\langle1\otimes^{\tau_E}(\nu^{-1})^*(x_i^*), \omega_B\otimes^{\tau_E} f_j\rangle &=&\langle\omega_B\otimes^{\tau_E} (-1)^{h_B}x_i^*, 1\otimes^{\tau_E} f_j\rangle \\
&=&(-1)^{h_B}\langle\omega_B\otimes^{\tau_E}1, 1\otimes^{\tau_E} x_i^*f_j\rangle \\
&=&(-1)^{h_B+h_A-1}\langle\omega_B\otimes^{\tau_E}1, 1\otimes^{\tau_E} f_j\mu_{E(A)}^{-1}(x_i^*)\rangle \\
&=&(-1)^{h_B+h_A-1}\langle\omega_B\otimes^{\tau_E}f_j, 1\otimes^{\tau_E} \mu_{E(A)}^{-1}(x_i^*)\rangle .
\end{eqnarray*}
So $\mu_{E(C)}(1\otimes^{\tau_E}x_i^*)=1\otimes^{\tau_E}(\nu^{-1})^*\mu_{E(A)}(x_i^*)$. It implies
$$
{\mu_{E(C)}}_{|E^1(A)}=(\nu^{-1})^*{\mu_{E(A)}}_{|E^1(A)}.
$$
By Theorem \ref{the properties between ext algebras and AS regular algebras}(b) and the noetherianess of $C$,
%\begin{eqnarray*}
%{\mu_C}_{|A_1}&=&(-1)^{h_A+h_B}{{\mu_E}_{|E^1(A)}}^*\\
%&=&(-1)^{h_A}\left((\nu^{-1})^*\mu_{E(A)}\right)^*\\
%&=&(-1)^{h_A}\mu_{E(A)}^*\nu^{-1}\\
%&=&\mu_A\nu^{-1}.
%\end{eqnarray*}
\begin{equation*}
{\mu_C}_{|A_1}=({\mu_{E(C)}}_{|E^1(A)})^*=\left((\nu^{-1})^*\circ\mu_{E(A)}\right)^*=\mu_{E(A)}^*\circ\nu^{-1}=\mu_A\circ{\nu^{-1}}_{|A_1}.
\end{equation*}
In conclusion, ${\mu_C}_{|A}=\mu_A\circ (\det \sigma)^{-1}=(\det \sigma)^{-1} \circ\mu_A$, since the Nakayama automorphism is in the center of automorphism group of algebra by \cite[Theorem 0.6]{LMZ}.

Now we turn to consider $\mu_{E(C)}(y_j^*\otimes^{\tau_E} 1)$. Let $\omega_A$ be a basis of $E^{h_A}(A)$, and $\{g_1,\cdots,g_m\}$ be a basis of $E^{h_B-1}(B)$.  Using the associativity of bilinear form $\langle\cdot,\cdot\rangle $ of $E(C)$, we have the only possible nonzero values
$$
\langle g_i\otimes^{\tau_E}\omega_A, y_j^*\otimes^{\tau_E} 1\rangle =\langle g_i\otimes^{\tau_E}1,(1\otimes^{\tau_E}\omega_A)(y_j^*\otimes^{\tau_E} 1)\rangle ,
$$
for any $i,j=1,2,\cdots,m$. By Lemma \ref{form of RE},
$$
(1\otimes^{\tau_E}\omega_A)(y_1^*\otimes^{\tau_E} 1,\cdots,y_m^*\otimes^{\tau_E} 1)^T=(-1)^{h_A}\hdet\sigma\cdot (y_1^*\otimes^{\tau_E}\omega_A,\cdots,y_m^*\otimes^{\tau_E}\omega_A)^T.
$$
By the associativity again,
\begin{align*}
&\;\left(\langle g_i\otimes^{\tau_E}\omega_A, y_1^*\otimes^{\tau_E} 1\rangle,\cdots,\langle g_i\otimes^{\tau_E}\omega_A, y_m^*\otimes^{\tau_E} 1\rangle \right)^T \\
=&\; (-1)^{h_A}\hdet\sigma\cdot\left(\langle g_i\otimes^{\tau_E}1,y_1^*\otimes^{\tau_E}\omega_A\rangle,\cdots, \langle g_i\otimes^{\tau_E}1,y_m^*\otimes^{\tau_E}\omega_A\rangle\right)^T \\
=&\; (-1)^{h_A}\hdet\sigma\cdot\left(\langle g_iy^*_1\otimes^{\tau_E}1,1\otimes^{\tau_E}\omega_A\rangle,\cdots, \langle g_iy^*_m\otimes^{\tau_E}1,1\otimes^{\tau_E}\omega_A\rangle\right)^T  \\
=&\;(-1)^{h_A+h_B-1}\hdet\sigma\cdot\left(\langle \mu_{E(B)}(y_1^*)g_i\otimes^{\tau_E}1,1\otimes^{\tau_E}\omega_A\rangle,\cdots, \langle \mu_{E(B)}(y_m^*)g_i\otimes^{\tau_E}1,1\otimes^{\tau_E}\omega_A\rangle\right)^T\\
=&\;(-1)^{h_A+h_B-1}\hdet\sigma\cdot\left(\langle \mu_{E(B)}(y_1^*)\otimes^{\tau_E}1,g_i\otimes^{\tau_E}\omega_A\rangle,\cdots, \langle \mu_{E(B)}(y_m^*)\otimes^{\tau_E}1,g_i\otimes^{\tau_E}\omega_A\rangle\right)^T.
\end{align*}
Thus $\left(\mu_{E(C)}(y_1^*\otimes^{\tau_E} 1),\cdots,\mu_{E(C)}(y_m^*\otimes^{\tau_E} 1)\right)^T=\hdet\sigma\cdot\left(\mu_{E(B)}(y_1^*)\otimes^{\tau_E} 1,\cdots,\mu_{E(B)}(y_m^*)\otimes^{\tau_E} 1\right)^T$, and
$$
{\mu_{E(C)}}_{|E^1(B)}(y_1^*,\cdots,y_m^*)^T=\hdet\sigma\cdot\mu_{E(B)}(y_1^*,\cdots,y_m^*)^T.
$$
By Theorem \ref{the properties between ext algebras and AS regular algebras}(b), the noetherianess of $C$ and Proposition \ref{hdet is invertible},
\begin{align*}
&{\mu_C}_{|B}
\left(
\begin{array}{c}
y_1\\
y_2\\
\vdots\\
y_m
\end{array}
\right)
=
\hdet\sigma\cdot\mu_B\left(
\begin{array}{c}
y_1\\
y_2\\
\vdots\\
y_m
\end{array}
\right).
\end{align*}
The proof is completed.
\end{proof}

Finally, we assume the graded twisting map $\tau=(\sigma,\delta):B\otimes A\to A\otimes B$ with nonzero $\sigma$-derivation $\delta$. In this case, the related graded linear map $\bar{\tau}=(\sigma,0):B\otimes A\to A\otimes B$ with zero $\sigma$-derivation is also a graded twisting map. We link the two twisted tensor products by a filtration of $A\otimes^{\tau}B$.

For any monomial $w$ in $k\langle X\rangle$ and $k\langle Y\rangle$, we have a canonical definition of length for $w$, denoted by $l(w)$. Give a monid homomorphism $l'$ from $k\langle X,Y\rangle$ to $\mathbb{N}$ satisfying
$$l'(x)=l(x)\quad\text{ and }\quad l'(y)=l(y)+1,$$
where $x\in X$ and $y\in Y$. So there is a filtration on $k\langle X,Y\rangle$ such that $F_i\,\Big{(}k\langle X,Y\rangle\Big{)}$ is a vector space spanned by monomials $w$ in $k\langle X,Y\rangle$ with $l'(w)\leq i$ for $i\in \mathbb{Z}$. Since $A\otimes^{\tau}B\cong k\langle X,Y\rangle/I$ for some ideal $I$ of $k\langle X,Y\rangle$, there is a natural filtration
$$
F_i(A\otimes^{\tau}B):=\frac{F_i\,\Big{(}k\langle X,Y\rangle\Big{)}+I}{I},\quad \forall i\in\mathbb{Z}.
$$
The associated graded algebra $\gr (A\otimes^{\tau}\!B)\cong A\otimes^{\bar{\tau}}\!B$ as graded algebras. We say $A\otimes^{\bar{\tau}}\!B$ is the associated twisted tensor product of $A\otimes^{\tau}\!B$.

\begin{theorem}\label{Nakayama automorphism of twisted tensor products with nonzero delta}
Let $A=k\langle X\rangle/(G_A)$ and $B=k\langle Y\rangle/(G_B)$ be both noetherian AS-regular algebras generated in degree $1$, where $Y=\{y_i\}_{i=1}^m$ is the minimal generating set of $B$. Let $\tau=(\sigma,\delta):B\otimes A\to A\otimes B$ be a graded twisting map such that the associated twisted tensor product $A\otimes^{\bar{\tau}}\!B$ is noetherian. Suppose $B$ has a pure resolution and $\sigma$ is invertible, then Nakayama automorphism $\mu_{A\otimes^{\tau}B}$ satisfies
\begin{align*}
&{\mu_{A\otimes^{\tau}B}}_{|A}=({\det} \sigma)^{-1}\circ \mu_A,\\
&{\mu_{A\otimes^{\tau}B}}
\left(
\begin{array}{c}
1\otimes^{\tau}y_1\\
1\otimes^{\tau}y_2\\
\vdots\\
1\otimes^{\tau}y_m
\end{array}
\right)
=
\hdet\sigma\cdot(A\otimes^{\tau}\!\mu_B)\left(
\begin{array}{c}
1\otimes^{\tau}y_1\\
1\otimes^{\tau}y_2\\
\vdots\\
1\otimes^{\tau}y_m
\end{array}
\right)
+
\left(
\begin{array}{c}
a_1\otimes^{\tau}1\\
a_2\otimes^{\tau}1\\
\vdots\\
a_m\otimes^{\tau}1
\end{array}
\right),
\end{align*}
for some $a_1,a_2,\cdots,a_m\in A$.
\end{theorem}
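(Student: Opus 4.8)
The plan is to reduce the statement, through the filtration $F_{\bullet}$ on $C:=A\otimes^{\tau}B$ introduced above, to the zero-derivation case already established in Theorem \ref{Nakayama automorphism of twisted tensor products}. By hypothesis $A\otimes^{\bar{\tau}}B$ is noetherian, and by Theorem \ref{AS-regularity of twisted tensor products} it is AS-regular of type $(h_A+h_B,l_A+l_B)$; since $\gr C\cong A\otimes^{\bar{\tau}}B$ as graded algebras, $F_{\bullet}$ is an exhaustive filtration whose associated graded algebra is connected graded, noetherian and AS-regular. The first step is to invoke the filtered-to-graded lifting for AS-regular algebras: $C$ is then itself noetherian AS-regular of the same type, and its (unique) Nakayama automorphism $\mu_{C}$ is a \emph{filtered} automorphism whose induced map $\gr\mu_{C}$ on $\gr C$ coincides with the Nakayama automorphism of $\gr C$, that is, $\gr\mu_{C}=\mu_{A\otimes^{\bar{\tau}}B}$. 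In practice this is obtained by lifting a minimal graded free resolution of $k$ over $\gr C$ to a filtered free resolution over $C$ and reading off $\uExt^{\bullet}_{C^e}(C,C^e)$ from its associated graded; I expect this to be the only genuinely technical point, and the main obstacle, and it can be quoted from the standard theory of filtered AS-regular algebras.

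Granting $\gr\mu_{C}=\mu_{A\otimes^{\bar{\tau}}B}$, the rest is bookkeeping with the two compatible gradings on $C$: the natural connected grading, for which $\mu_{C}$ has degree $0$, and the filtration $F_{\bullet}$, whose terms are graded subspaces since $l'$ and the internal degree are simultaneously defined on monomials. Recall $C_1=(A_1\otimes^{\tau}1)\oplus(1\otimes^{\tau}B_1)$, $F_1=k\oplus(A_1\otimes^{\tau}1)$, and $C_1\subseteq F_2$ with the degree-$1$ part of $F_2/F_1$ equal to $1\otimes^{\bar{\tau}}B_1$. For the $A$-component, $\mu_{C}(x_i\otimes^{\tau}1)$ is a degree-$1$ element of $F_1$, hence lies in $A_1\otimes^{\tau}1$, and its symbol in $F_1/F_0$ is $\mu_{A\otimes^{\bar{\tau}}B}(x_i\otimes^{\bar{\tau}}1)=\big((\det\sigma)^{-1}\circ\mu_A\big)(x_i)\otimes^{\bar{\tau}}1$ by Theorem \ref{Nakayama automorphism of twisted tensor products}; as $A_1\otimes^{\tau}1\to F_1/F_0$ is an isomorphism, this forces $\mu_{C}(x_i\otimes^{\tau}1)=\big((\det\sigma)^{-1}\circ\mu_A\big)(x_i)\otimes^{\tau}1$. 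Since $A$ is generated in degree $1$, $\mu_{C}$ preserves $\iota_A(A)$ and ${\mu_{C}}_{|A}=(\det\sigma)^{-1}\circ\mu_A$.

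For the $B$-component, using $C_1\subseteq F_2$ and the decomposition of $C_1$, write $\mu_{C}(1\otimes^{\tau}y_j)=a_j\otimes^{\tau}1+1\otimes^{\tau}b_j$ with $a_j\in A_1$ and $b_j\in B_1$. As $A_1\otimes^{\tau}1\subseteq F_1$, the symbol of $\mu_{C}(1\otimes^{\tau}y_j)$ in $F_2/F_1$ is $1\otimes^{\bar{\tau}}b_j$, and by $\gr\mu_{C}=\mu_{A\otimes^{\bar{\tau}}B}$ together with Theorem \ref{Nakayama automorphism of twisted tensor products} it equals $\mu_{A\otimes^{\bar{\tau}}B}(1\otimes^{\bar{\tau}}y_j)$, namely the $j$-th entry of $\hdet\sigma\cdot\big(1\otimes^{\bar{\tau}}\mu_B(y_1),\dots,1\otimes^{\bar{\tau}}\mu_B(y_m)\big)^{T}$. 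Comparing inside $B$ yields $b_j=\sum_{k=1}^{m}(\hdet\sigma)_{jk}\,\mu_B(y_k)$, hence back in $A\otimes^{\tau}B$,
$$
{\mu_{A\otimes^{\tau}B}}\!\left(\begin{array}{c}1\otimes^{\tau}y_1\\ \vdots\\ 1\otimes^{\tau}y_m\end{array}\right)=\hdet\sigma\cdot(A\otimes^{\tau}\!\mu_B)\!\left(\begin{array}{c}1\otimes^{\tau}y_1\\ \vdots\\ 1\otimes^{\tau}y_m\end{array}\right)+\left(\begin{array}{c}a_1\otimes^{\tau}1\\ \vdots\\ a_m\otimes^{\tau}1\end{array}\right),
$$
the term $a_j\otimes^{\tau}1$ being the lower-order correction that the filtration renders invisible (and that the theorem does not attempt to pin down). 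Thus, once the transfer $\gr\mu_{C}=\mu_{\gr C}$ is in hand, both the identification of symbols and the lifting back to $A\otimes^{\tau}B$ are routine.
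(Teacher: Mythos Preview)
Your proposal is correct and follows essentially the same route as the paper: reduce to the zero-derivation case via the filtration with $\gr(A\otimes^{\tau}B)\cong A\otimes^{\bar{\tau}}B$, use that the Nakayama automorphism is filtered with $\gr\mu_{A\otimes^{\tau}B}=\mu_{A\otimes^{\bar{\tau}}B}$, and then apply Theorem \ref{Nakayama automorphism of twisted tensor products}. The paper dispatches your ``only genuinely technical point'' by a direct citation to \cite[Lemma 5]{V} rather than sketching the filtered-resolution argument, and leaves the degree-and-filtration bookkeeping on generators implicit where you spell it out.
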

\begin{proof}
By the discussion above this theorem, there is a filtration of $A\otimes^{\tau}\!B$ such that the associated graded algebra $\gr (A\otimes^{\tau}\!B)\cong A\otimes^{\bar{\tau}}\!B$ where $\bar{\tau}=(\sigma,0)$. The Nakayama automorphism $\mu_{A\otimes^{\tau}B}$ of $A\otimes^{\tau}\!B$ is filtered and $\gr\, \mu_{A\otimes^{\tau}B}=\mu_{A\otimes^{\bar{\tau}}B}$ by \cite[Lemma 5]{V}. The result follows from Theorem \ref{Nakayama automorphism of twisted tensor products} and the construction of filtration.
\end{proof}

\begin{remark}\label{Nakayama automorphisms for Koszul assumption}
From Proposition \ref{minimal resolution of Ck}, it is easy to know the twisted tensor product of Koszul algebras is also Koszul. If the noetherian condition is replaced by Koszul, the Theorem \ref{the properties between ext algebras and AS regular algebras}(b) still hold by \cite[Theorem 1.3]{BM}. So we can release the noetherian conditions in Theorem \ref{Nakayama automorphism of twisted tensor products} and \ref{Nakayama automorphism of twisted tensor products with nonzero delta} if the connected graded algebras $A$ and $B$ are Koszul algebras.
\end{remark}

\section{Examples}\label{Section Examples}
We conclude this paper with some applications of Theorem \ref{Nakayama automorphism of twisted tensor products with nonzero delta}.
\begin{example}\label{Example:Nakayama automorphisms of Ore extensions}
The classical Ore extension $A[z;\sigma,\delta]$ is a twisted tensor product $A\otimes^{\tau}k[z]$ with
$$
\tau(z\otimes a)=\sigma(a)\otimes z+\delta(a)\otimes 1,\quad \forall a\in A,
$$
that is, $\tau=(\sigma,\delta)$. In this case, $\det\sigma=\sigma$ and $\hdet\sigma$ is just the one defined in \cite{JZ}. The Nakayama of polynomial algebra $k[z]$ is the identity map. If $A$ is a noetherian AS-regular algebra with Nakayama automorphism $\mu_A$ generated in degree $1$ and $\sigma$ is an automorphism of $A$, then the Nakayama automorphism $\mu$ of $A[z;\sigma,\delta]$ satisfies
\begin{eqnarray*}
&&\mu(a)=\sigma^{-1}\circ\mu_A(a),\quad\forall a\in A\\
&&\mu(z)=(\hdet\sigma) z+a',
\end{eqnarray*}
for some $a'\in A$, by Theorem \ref{Nakayama automorphism of twisted tensor products with nonzero delta}. It coincides with the result \cite[Theorem 2]{LWW}.

%In particular, the Ext-algebra of Ore extension $A[z;\sigma]$ with zero $\sigma$-derivation can be decomposed into two parts: Ext-algebra $E(A)$ of $A$ and exterior algebra $\bigwedge\!V$ where $V$ is one dimensional, by Theorem \ref{E(C) is a twisted tensor product of E(B) and E(A)}.
\end{example}

\begin{example}\label{Example:Nakayama automorphisms of double Ore extensions}
The second example is double Ore extension with zero tails $A_P[y_1,y_2;\sigma,\delta]$, where $P=\{p_{12},p_{11}\}\subseteq k$, algebra homomorphism $\sigma:A\to \mathbb{M}_2(A)$ and $\sigma$-derivation $\delta:A\to A^{\oplus 2}$ (see \cite{ZZ1} for details). It is a twisted tensor product of $A$ and $B=k\langle y_1,y_2\rangle/(y_2y_1-p_{12}y_1y_2-p_{11}y_1^2)$ with
$$
\tau(a\otimes y_i)=\sigma_{i1}(a)\otimes y_1+\sigma_{i2}(a)\otimes y_2+\delta_i(a)\otimes 1,\quad\forall a\in A,i=1,2.
$$
So $\tau=(\sigma,\delta)$, and $\det\sigma=-p_{11}\sigma_{12}\sigma_{11}+\sigma_{22}\sigma_{11}-p_{12}\sigma_{12}\sigma_{21}$.
If $p_{12}$ is nonzero, $B$ is Koszul AS-regular with Nakayama automorphism $\mu_B$:
$$
\mu_{B}
\left(
\begin{array}{c}
y_1\\
y_2
\end{array}
\right)
=
\left(
\begin{array}{cc}
p_{12}^{-1}&0\\
p_{11}(1+p_{12}^{-1})&p_{12}
\end{array}
\right)
\left(
\begin{array}{c}
y_1\\
y_2
\end{array}
\right),
$$
and $\det\sigma$ also equals $-p_{12}^{-1}p_{11}\sigma_{11}\sigma_{12}-p_{12}^{-1}\sigma_{21}\sigma_{12}+\sigma_{11}\sigma_{12}$.  If $A$ is a noetherian AS-regular algebra with Nakayama automorphism $\mu_A$ generated in degree $1$, the associated trimmed double Ore extension $A_P[y_1,y_2;\sigma]$ is noetherian and $\sigma$ is invertible, by Theorem \ref{Nakayama automorphism of twisted tensor products with nonzero delta}, we have Nakayama automorphism $\mu$ of $A_P[y_1,y_2;\sigma,\delta]$ satisfying
\begin{eqnarray*}
&&\mu(a)=(\det \sigma)^{-1}\circ\mu_A(a),\quad\forall a\in A\\
&&\mu
\left(
\begin{array}{c}
	y_1\\
	y_2
\end{array}
\right)=
\hdet\sigma\cdot
\left(
\begin{array}{cc}
	p_{12}^{-1}&0\\
	p_{11}(1+p_{12}^{-1})&p_{12}
\end{array}
\right)
\left(
\begin{array}{c}
	y_1\\
	y_2
\end{array}
\right)
+
\left(
\begin{array}{c}
	a_1\\
	a_2
\end{array}
\right),
\end{eqnarray*}
for some $a_1,a_2\in A$. In particular, if $A$ is Koszul and $\delta=0$, this Nakayama automorphism of trimmed double Ore extension $A_P[y_1,y_2;\sigma]$ is the same to the one obtained in \cite[Theorem 2]{ZVZ} by Remark \ref{Nakayama automorphisms for Koszul assumption}.
\end{example}

\begin{example}\label{Example:twisted tensor product of special down-up algebras}
Let $k$ be a field of characteristic 0. We consider the Nakayama automorphism of the example constructed in \cite[Example 4.3]{WSL}. Let $A=k\langle x_{1},x_{2}\rangle/(f_{1},f_{2})$, where $f_{1}=x_{1}^{2}x_{2}-x_{2}x_{1}^{2}$ and $f_{2}=x_{1}x_{2}^{2}-x_{2}^{2}x_{1}$. In fact, $A$ is a  noetherian $3$-Koszul AS-regular algebra, and the Nakayama automorphism $\mu_A$ of $A$ is
\begin{eqnarray*}
\mu_A(x_1)=-x_1,\quad \mu_A(x_2)=-x_2.
\end{eqnarray*}
Define a graded normal linear map $\tau=(\sigma,0)$ from $A\otimes A$ to $A\otimes A$
by
$$
\begin{array}{ll}
\tau(x_{1}\otimes x_{1})=px_{2}\otimes x_{1}+px_{2}\otimes x_{2},\\
\tau(x_{1}\otimes x_{2})=px_{1}\otimes x_{1}+px_{1}\otimes x_{2},\\
\tau(x_{2}\otimes x_{1})=px_{2}\otimes x_{1}-px_{2}\otimes x_{2},\\
\tau(x_{2}\otimes x_{2})=px_{1}\otimes x_{1}-px_{1}\otimes x_{2},
\end{array}
$$
where $p\in k^{\times}$. The algebra homomorphism $\sigma:A\to\mathbb{M}_2(A)$ is invertible with inverse $\varphi$ satisfying:
\begin{eqnarray*}
&\sigma(x_1)=
p\left(
\begin{array}{cc}
x_2&x_2\\
x_2&-x_2
\end{array}
\right),
\quad
&\sigma(x_2)=
p\left(
\begin{array}{cc}
x_1&x_1\\
x_1&-x_1
\end{array}
\right).
%&\varphi(x_1)=
%\cfrac{1}{2p}\left(
%\begin{array}{cc}
%x_2&x_2\\
%x_2&-x_2
%\end{array}
%\right),
%\quad
%&\varphi(x_2)=
%\cfrac{1}{2p}\left(
%\begin{array}{cc}
%x_1&x_1\\
%x_1&-x_1
%\end{array}
%\right).
\end{eqnarray*}
The twisted tensor product $A\otimes^{\tau}\!A$ is isomorphic to the algebra $C:=k\langle x_1,x_2,y_1,y_2\rangle/(G)$, where $G=\{g_i\}_{i=1}^8$, and
$$\begin{array}{llll}
g_{1}=x_{1}^{2}x_{2}-x_{2}x_{1}^{2}, &&& g_{2}=x_{1}x_{2}^{2}-x_{2}^{2}x_{1},\\
g_{3}=y_{1}^{2}y_{2}-y_{2}y_{1}^{2}, &&& g_{4}=y_{1}y_{2}^{2}-y_{2}^{2}y_{1},\\
g_{5}=y_1x_1-px_2y_1-px_2y_2, &&& g_{6}=y_1x_2-px_1y_1-px_1y_2,\\
g_{7}=y_2x_1-px_2y_1+px_2y_2, &&& g_{8}=y_2x_2-px_1y_1+px_1y_2.
\end{array}$$
The algebra $C$ is also a noetherian AS-regular algebra by Theorem \ref{AS-regularity of twisted tensor products}. We compute the Nakayama automorphism of $C$ in the following.

A minimal free resolution of $_Ak$ is
$$
P^\centerdot: 0\to A(-4)\xlongrightarrow{d^{-3}} A(-3)^{\oplus 2}\xlongrightarrow{d^{-2}} A(-1)^{\oplus 2}\xlongrightarrow{d^{-1}} A\to {_Ak}\to0,
$$
where
\begin{eqnarray*}
d^{-1}=\left(\begin{array}{c}x_1\\x_2\end{array}\right),\quad
d^{-2}=\left(\begin{array}{cc}-x_2x_1&x_1^2\\
-x_2^2&x_1x_2\end{array}\right),\quad
d^{-3}=\left(\begin{array}{cc}-x_2&x_1\end{array}\right).
\end{eqnarray*}

Firstly, we calculate the determinant of $\sigma$. We construct two algebra homomorphisms $\phi_2:A\to\mathbb{M}_2(A)$ and $\det\sigma:A\to A$ to make complex $A\otimes^{\tau}P^\centerdot$ exact in $\GrMod\,(C\otimes A^o)$ as in Theorem \ref{resolution of A with bimodule structure}, that is,
$$
0\to C(-4)^{\det\sigma}\to \left(C(-3)^{\oplus2}\right)^{\phi_2}\to \left(C(-1)^{\oplus2}\right)^\sigma\to C\to A\to0,
$$
where
\begin{eqnarray*}
&&\phi_2(x_1)=-2p^3\left(\begin{array}{cc}x_2&x_2\\x_2&-x_2\end{array}\right),\quad
\phi_2(x_2)=-2p^3\left(\begin{array}{cc}x_1&x_1\\x_1&-x_1\end{array}\right),\\
&&\det\sigma (x_1)=4p^4x_1,\quad\quad\quad\quad\quad \det\sigma (x_2)=4p^4x_2.
\end{eqnarray*}

Now we turn to compute the homological determinant of $\sigma$. We have the following commutative diagram
$$
\xymatrix{
0\ar[r]&
\leftidx{^\varphi}{\left(A(-4)^{\oplus2}\right)}\ar[r]^(0.47){(d^{-3})^{\oplus2}}\ar[d]^{\Phi^{-3}}&
\leftidx{^\varphi}{\left(\left(A(-3)^{\oplus2}\right)^{\oplus2}\right)}\ar[r]^(0.5){(d^{-2})^{\oplus2}}\ar[d]^{\Phi^{-2}}&
\leftidx{^\varphi}{\left(\left(A(-1)^{\oplus2}\right)^{\oplus2}\right)}\ar[r]^(0.6){(d^{-1})^{\oplus2}}\ar[d]^{\Phi^{-1}}&
\leftidx{^\varphi}{\left(A^{\oplus2}\right)}\ar[r]\ar[d]^{\Phi^0}&
k^{\oplus2}\ar[r]\ar@{=}[d]&
0\\
0\ar[r]&
A(-4)^{\oplus2}\ar[r]^(0.47){(d^{-3})^{\oplus2}}&
\left(A(-3)^{\oplus2}\right)^{\oplus2}\ar[r]^(0.5){(d^{-2})^{\oplus2}}&
\left(A(-1)^{\oplus2}\right)^{\oplus2}\ar[r]^(0.6){(d^{-1})^{\oplus2}}&
A^{\oplus2}\ar[r]&
k^{\oplus2}\ar[r]&
0,\\
}
$$
where
\begin{align*}
&\Phi^0
\left(\hskip-1mm
\begin{array}{c}
a_1\\a_2
\end{array}
\hskip-1mm\right)
=
\left(\hskip-1mm
\begin{array}{c}
\sigma_{11}(a_1)+\sigma_{21}(a_2)\\
\sigma_{12}(a_1)+\sigma_{22}(a_2)
\end{array}\hskip-1mm\right),\\
&\Phi^{-1}
\left(\hskip-1mm
\begin{array}{c}
(a_1,a_2)\\
(b_1,b_2)
\end{array}
\hskip-1mm\right)
=
p\left(\hskip-1mm
\begin{array}{c}
(\sigma_{11}(a_2)+\sigma_{12}(a_2)+\sigma_{21}(b_2)+\sigma_{22}(b_2),
\sigma_{11}(a_1)+\sigma_{12}(a_1)+\sigma_{21}(b_1)+\sigma_{22}(b_1))\\
(\sigma_{11}(a_2)-\sigma_{12}(a_2)+\sigma_{21}(b_2)-\sigma_{22}(b_2),
\sigma_{11}(a_1)-\sigma_{12}(a_1)+\sigma_{21}(b_1)-\sigma_{22}(b_1))
\end{array}
\hskip-1mm\right),\\
&\Phi^{-2}
\left(\hskip-1mm
\begin{array}{c}
(a_1,a_2)\\
(b_1,b_2)
\end{array}
\hskip-1mm\right)
=
-2p^3\left(\hskip-1mm
\begin{array}{c}
(\sigma_{11}(a_2)+\sigma_{12}(a_2)+\sigma_{21}(b_2)+\sigma_{22}(b_2),
\sigma_{11}(a_1)+\sigma_{12}(a_1)+\sigma_{21}(b_1)+\sigma_{22}(b_1))\\
(\sigma_{11}(a_2)-\sigma_{12}(a_2)+\sigma_{21}(b_2)-\sigma_{22}(b_2),
\sigma_{11}(a_1)-\sigma_{12}(a_1)+\sigma_{21}(b_1)-\sigma_{22}(b_1))
\end{array}
\hskip-1mm\right),\\
&\Phi^{-3}
\left(\hskip-1mm
\begin{array}{c}
a_1\\a_2
\end{array}
\hskip-1mm\right)
=4p^4
\left(\hskip-1mm
\begin{array}{c}
\sigma_{11}(a_1)+\sigma_{21}(a_2)\\
\sigma_{12}(a_1)+\sigma_{22}(a_2)
\end{array}\hskip-1mm\right),
\end{align*}
for $a_1,a_2,b_1,b_2\in A$. By the definition, we have
$$
\hdet \sigma=\left(\begin{array}{cc}4p^4&0\\0&4p^4\end{array}\right).
$$
By Theorem \ref{Nakayama automorphism of twisted tensor products with nonzero delta}, we have the Nakayama automorphism $\mu_C$ satisfying
$$
\begin{array}{ll}
\mu_C(x_1)=-(4p^4)^{-1}x_1,\quad&\mu_C(x_2)=-(4p^4)^{-1}x_2,\\
\mu_C(y_1)=-4p^4y_1,\quad&\mu_C(y_2)=-4p^{4}y_2.
\end{array}
$$
\end{example}

\vskip7mm

\noindent {\bf Acknowledgments.} Y. Shen is supported by NSFC (Grant No.11626215) and Science Foundation of Zhejiang Sci-Tech University (ZSTU) under Grant No.16062066-Y. G.-S. Zhou is supported by NSFC (Grant No. 11601480). D.-M. Lu is supported by NSFC (Grant No. 11671351).
\vskip5mm

\vskip7mm


\begin{thebibliography}{10}

\bibitem{AS}
M. Artin and W. Schelter, \emph{Graded algebras of global dimension
$3$}, Adv. Math., \textbf{66(2)} (1987), 171--216.

\bibitem{BM}
R. Berger and N. Marconnet, \emph{Koszul and Gorenstein properties for homogeneous
algebras}, Algebr. Represent. Theory, \textbf{9(1)} (2006), 67--97.


\bibitem{CIMZ}
S. Caenepeel, B. Ion, G. Militaru and S. Zhu,  {\it The factorization problem and the smash biproduct of algebras and coalgebras}, Algebr. Represent. Theory, \textbf{3(1)} (2000), 19--42.

\bibitem{CSV}
A. \v{C}ap, H. Schichl and J. Van\v{z}ura, \emph{On twisted tensor products of algebras}, Comm. Algebra, \textbf{23(12)} (1995), 4701--4735.

\bibitem{CWZ}
K. Chan, C Walton and J. J. Zhang, \emph{Hopf actions and Nakayama automorphisms}, J. Alg., \textbf{409} (2014), 26--53.


%\bibitem{FV}
%G. Fl{\o}ystad, J. E. Vatne, \emph{Artin-Schelter regular algebras of dimension five}, Algebra, Geometry and Mathematical Physics, Banach Center Publications. \textbf{93} (2011), 19--39.

\bibitem{JZ}
P. J\o gensen and J. J. Zhang, \emph{Gourmet's guide to gorensteiness}, Adv. Math., \textbf{151} (2000), 313--345.


\bibitem{LWW}
L.-Y. Liu, S.-Q. Wang and Q.-S. Wu, \emph{Twisted Calabi-Yau property of Ore extensions}, J. Noncommut. Geom., \textbf{7} (2014), 587--609.

%\bibitem{LPWZ2}
%D.-M. Lu, J. H. Palmieri, Q.-S. Wu and J. J. Zhang, \emph{Regular algebras of dimension $4$ and their $A_\infty$-Ext-algebras}, Duke Math. J., \textbf{137(3)} (2007), 537--584.

%\bibitem{LPWZ3}
%D.-M. Lu, J. H. Palmieri, Q.-S. Wu and J. J. Zhang, \emph{$A_{\infty}$-structure on Ext-algebras}, J. Pure Appl. Alg., \textbf{213(11)} (2009), 2017--2037.

\bibitem{LPWZ4}
D.-M. Lu, J. H. Palmieri, Q.-S. Wu and J. J. Zhang, \emph{Kosuzl equivalences in $A_\infty$-algebras}, New York J. Math., \textbf{14} (2008), 325--378.


\bibitem{LMZ}
J.-F. L\"u, X.-F. Mao and J. J. Zhang, \emph{Nakayama automorphism and applications}, Trans. Amer. Math. Soc., \textbf{369} (2017), 2425--2460.

%\bibitem{MM}
%H. Minamoto and I. Mori, \emph{The structrue of AS-Gorenstein algebras}, Adv. Math., \textbf{226} (2011), 4061--4095.

%\bibitem{MS}
%I. Mori and P. Smith, \emph{$m$-Koszul Artin-Schelter regular algebras},  J. Alg., \textbf{446} (2016), 373--399.

\bibitem{RRZ1}M. Reyes, D. Rogalski, and J. J. Zhang, \emph{Skew Calabi-Yau algebras and homological identities}, Adv. Math.,\textbf{264} (2014), 308--354.

\bibitem{RRZ2}M. Reyes, D. Rogalski, and J. J. Zhang, \emph{Skew Calabi-Yau triangulated categories and Frobenius Ext-algebras.}, to appear in Trans. Amer. Math. Soc..

%\bibitem{RZ}
%D. Rogalski and J. J. Zhang, \emph{Regular algebras of dimension $4$ with $3$ generators}, Contemporary Mathematics, \textbf{562}, AMS (2012), 221--241.

\bibitem{SWZ}
Y. Shen, X. Wang and G.-S. Zhou, \emph{Ext-algebras of graded skew extensions and $A_\infty$-structures}, arXiv:1707.01610, 2017.

%\bibitem{SZL1}
%Y. Shen, G.-S. Zhou and D.-M. Lu, \emph{Homogeneous PBW deformation for Artin-Schelter regular algebras}, Bull. Aust. Math. Soc., \textbf{91(1)}, (2015), 53--68.

%\bibitem{SZL2}
%Y. Shen, G.-S. Zhou and D.-M. Lu, \emph{Regularity criterion and classification for algebras of Jordan type}, Glasgow Math. J., \textbf{58}, (2016), 69--95.


\bibitem{Sm}
S. P. Smith, \emph{Some finite dimensional algebras related to elliptic curves}, CMS Conf. Proc., \textbf{19} (1996), 315--348.

\bibitem{SZ}
D. R. Stephenson and J. J. Zhang, \emph{Growth of graded Noetherian rings}, Proc. Amer. Math. Soc., \textbf{125(6)} (1997), 1593--1605.


\bibitem{VV}
A. Van Daele and S. Van Keer, \emph{The Yang-Baxter and pentagon equation}, Compositio Math., \textbf{91(2)} (1994), 201--221.

\bibitem{V}
M. Van den Bergh, \emph{A relation between Hochschild homology and cohomology for Gorenstein rings}, Proc. Amer. Math. Soc., \textbf{126} (1998), 1345--1348.

\bibitem{WW}
C. Walton and S. Witherspoon, \emph{PBW deformations of braided products}, arXiv:1601.02274v2, 2017.


%\bibitem{WW}
%S.-Q. Wang and Q.-S. Wu, \emph{A class of AS-regular algebras of dimension five}, J. Algebra, \textbf{362} (2012), 117--144.

%\bibitem{W} C. Weibel, \emph{An introduction to homological algebra}, Cambridge Studies in Advanced Mathematics, 38. Cambridge University Press, Cambridge, 1994.

\bibitem{WSL}
X. Wang, Y. Shen and D.-M. Lu, \emph{Artin-Schelter regularity of $R$-smash products}, preprint.

\bibitem{YZ}
A. Yekutieli and J. J. Zhang, {\em Homological transcendence degree}, Proc. Lond. Math. Soc.,  \textbf{93(3)} (2006), 105-137.


\bibitem{ZZ1}
J. J. Zhang and J. Zhang, \emph{Double Ore extension}, J. Pure Appl. Algebra, \textbf{212(12)} (2008), 2668--2690.

%\bibitem{ZZ2}
%J. J. Zhang and J. Zhang, \emph{Double extension regular algebras of type $(14641)$}, J. Algebra, \textbf{322(2)} (2009), 373--409.

%\bibitem{ZL}
%G.-S. Zhou and D.-M. Lu, \emph{Artin-Schelter regular algebras of dimension five with two generators}, J. Pure Appl. Algebra, \textbf{218} (2014), 937--961.

\bibitem{ZVZ}
C. Zhu, F. Van Oystaeyen and Y.-H. Zhang, \emph{Nakayama automorphisms of double Ore extensions of Koszul regular algebras}, Mauscripta Math., \textbf{152} (2017), 555--584.
\end{thebibliography}
\end{document}